\documentclass{amsart}
\usepackage[english]{babel}
\usepackage{csquotes}

\usepackage{graphicx}
\usepackage{amsmath}
\usepackage{amssymb}
\usepackage{mathtools}
\usepackage[shortlabels]{enumitem}
\usepackage{amsmath,amsthm,amssymb,amsfonts,mathrsfs,faktor,mathtools,array,bbm}
\usepackage{tikz-cd}
\usetikzlibrary{babel}

\usepackage[hidelinks]{hyperref}
\usepackage[nameinlink,capitalize]{cleveref}
\usepackage{extpfeil}

\usepackage[backend=biber,style=alphabetic]{biblatex}
\addbibresource{Bibliografia.bib}

\newtheorem{theorem}{Theorem}[section] 
\newtheorem{lemma}[theorem]{Lemma}
\newtheorem{proposition}[theorem]{Proposition}
\newtheorem{corollary}[theorem]{Corollary}

\newtheorem{definition}[theorem]{Definition}

\newtheorem{remark}[theorem]{Remark}
\newtheorem{example}[theorem]{Example}

\newtheorem{mainthm}{Main Theorem}

\title{Stability Conditions on Abelian Comma Categories}
\author{Ellen de Oliveira and Guido Neulaender}

\begin{document}
	
	\begin{abstract}
		A comma category, exemplified in algebraic geometry by coherent systems, combines two categories over a third through morphisms between their objects. We establish sufficient conditions for it to be abelian, compute its Grothendieck group, and give necessary and sufficient criteria for it to be noetherian and artinian. Finally, we define a stability condition on abelian comma categories under hypotheses on the initial categories and, conversely, induce stability conditions on the initial abelian categories from those on the comma categories.
		
	\end{abstract}
	
	\maketitle
	
	\tableofcontents
	
	\section{Introduction}
	Comma categories were first introduced by F. W. Lawvere in his 1963 Ph.D. thesis, \textit{ Functorial Semantics of Algebraic Theories} \cite{lawvere1963functorial}. The terminology comes from his notation \((F,G)\), where the comma was used to separate the functors. Lawvere’s thesis remained unpublished until its 2004 reprint, which included the author's commentary \cite{lawvere2004reprints}. In his own words, he introduced comma categories as a natural way ``to show that the notion of adjointness is of an elementary character'' in the theory of the category of categories, making certain constructions more explicit.
	
	
	A comma category, denoted by \((F/G)\), arises from three initial categories \(\mathcal{A}\), \(\mathcal{B}\), and \(\mathcal{C}\), together with two functors \(F: \mathcal{A} \rightarrow \mathcal{C}\) and \(G: \mathcal{B} \rightarrow \mathcal{C}\), relating objects \(A \in \mathcal{A}\) and \(B \in \mathcal{B}\) via a morphism \(\alpha: F(A) \to G(B)\) in \(\mathcal{C}\), forming a triple \((A, B, \alpha)\). Morphisms in \((F/G)\) consist of pairs \((f,g): (A,B,\alpha) \rightarrow (A',B',\alpha')\), with \(f: A \rightarrow A' \in \operatorname{Mor}(\mathcal{A})\) and \(g: B \rightarrow B' \in \operatorname{Mor}(\mathcal{B})\), such that the following diagram commutes:
	\begin{equation*}
		\begin{tikzcd}
			F(A) \arrow[r, "F(f)"] \arrow[d, "\alpha"'] & F(A') \arrow[d, "\alpha'"] \\
			G(B) \arrow[r, "G(g)"']                & G(B')        \end{tikzcd}
	\end{equation*}
	
	
	Although comma categories originated in mathematical logic and universal algebra, our interest in working with them arises naturally from algebraic geometry, where certain decorated sheaves — such as coherent systems (see \cite{he1996espaces}) — appear as particular examples of this type of category. The importance of this categorical approach lies in the fact that, instead of proving the same kind of results separately for each of these specific categories, we can develop a general theory that applies to all of them. 
	
	One of the main results we obtain is a broader notion of stability on comma categories. To achieve this, we first need to ensure that a comma category is abelian and to understand the behavior of its Grothendieck group. In this context, a natural question arises: ``Under what conditions is a comma category abelian?'' Motivated by a discussion in \cite{unapologetic2007comma}, we establish conditions on the categories \(\mathcal{A}\), \(\mathcal{B}\), and \(\mathcal{C}\), as well as on the functors \(F\) and \(G\), that guarantee a comma category is abelian. Having established the abelian structure of a comma category, we then explore its algebraic aspects, focusing on the computation of its Grothendieck group \(K((F/G))\). To characterize \(K((F/G))\), we observe that morphisms in \(\mathcal{C}\) do not affect computation. More precisely, we prove the following result:
	
	\begin{mainthm}[\cref{teo_comma_abeliana} and \cref{teo_GG_comma}]
		\label{mainthm_comma_abel_GG}
		Let $\mathcal{A}$, $\mathcal{B}$, and $\mathcal{C}$ be abelian categories, $F: \mathcal{A} \rightarrow \mathcal{C}$ a right exact functor, and $G: \mathcal{B} \rightarrow \mathcal{C}$ a left exact functor. Then, $(F/G)$ is an abelian category with Grothendieck group \( K((F/G)) \) isomorphic to \( K(\mathcal{A}) \oplus K(\mathcal{B})\).
	\end{mainthm}
	The converse is not true. In fact, we exhibit a counterexample (see \cref{obs_contraexemplo}) in which a comma category \((F/G)\) is abelian, but the functor \(F\) is not right exact.
	
	A stability condition, following \cite{macri2017lectures}, is a pair \((\mathcal{A}, Z)\), where \(Z\) is a stability function and every non-zero object of \(\mathcal{A}\) admits a Harder-Narasimhan (HN) filtration with respect to \(Z\). According to \cite[Proposition 4.10]{macri2017lectures}, if the image of the imaginary part of \(Z\) is discrete for a noetherian category, then every non-zero object admits an HN filtration with respect to \(Z\). First, we obtain necessary and sufficient conditions for \((F/G)\) to be noetherian. Specifically, \((F/G)\) is noetherian if and only if both \(\mathcal{A}\) and \(\mathcal{B}\) are noetherian. Therefore, given stability functions defined on the noetherian categories \(\mathcal{A}\) and \(\mathcal{B}\), we can define a stability function on \((F/G)\) such that the image of its imaginary part is discrete; consequently, this yields a stability condition on \((F/G)\). Conversely, any stability function defined on a noetherian category \((F/G)\), for which the imaginary part has a discrete image, can be naturally restricted to stability functions on \(\mathcal{A}\) and \(\mathcal{B}\). Since the imaginary parts of these restricted functions also have discrete images, it yields stability conditions on \(\mathcal{A}\) and \(\mathcal{B}\). In summary, we obtain the following result:
	
	\begin{mainthm}[\cref{teo_HN_comma} and \cref{teo_HN_sist_cat_inic_HN}]
		Assume the conditions of \cref{mainthm_comma_abel_GG} hold.
		\begin{enumerate}
			\item \((F/G)\) is noetherian if and only if \(\mathcal{A}\) and \(\mathcal{B}\) are noetherian categories.
			
			\item Let \(\mathcal{A}\) and \(\mathcal{B}\) be noetherian categories with \(Z_{\mathcal{A}}\) a stability function on \(\mathcal{A}\) and \(Z_{\mathcal{B}}\) a stability function on \(\mathcal{B}\) such that the image of \(\operatorname{Im}Z_{\mathcal{A}} + \operatorname{Im}Z_{\mathcal{B}}\) is discrete in \(\mathbb{R}\). Then every object in \((F/G)\) admits an HN filtration with respect to the stability function \(Z = Z_{\mathcal{A}} + Z_{\mathcal{B}}\).
			
			\item Let \((F/G)\) be a noetherian category, and let \(Z\) be a stability function on \((F/G)\) such that the image of the imaginary part of \(Z\) is discrete in \(\mathbb{R}\). Then each object in \(\mathcal{A}\) admits an HN filtration with respect to the stability function \(Z_{\mathcal{A}}= Z([(A,0,0)])\) and each object in \(\mathcal{B}\) admits an HN filtration with respect to the stability function \(Z_{\mathcal{B}}= Z([(0,B,0)])\).
		\end{enumerate}
	\end{mainthm}
	
	We also investigate the existence of Jordan-Hölder filtrations on comma categories. According to \cite[Lemma 12.9.6]{stacksproj}, an abelian category has a Jordan-Hölder filtration for every object if and only if it is noetherian and artinian. We obtain a result analogous to the previous one: \((F/G)\) is artinian if and only if both \(\mathcal{A}\) and \(\mathcal{B}\) are artinian. Combining this with our earlier result on the noetherian property, we conclude that the existence of a Jordan-Hölder filtration in \((F/G)\) is equivalent to its existence in \(\mathcal{A}\) and \(\mathcal{B}\). More precisely:
	
	\begin{mainthm}[\cref{teo_JH_A_B_impl_JH_comma} and \cref{teo_JH_comma_impl_JH_A_B}]
		Assume the conditions of \cref{mainthm_comma_abel_GG} hold.
		\begin{enumerate}
			\item  \((F/G)\) is artinian if and only if \(\mathcal{A}\) and \(\mathcal{B}\) are artinian categories.
			
			\item Each object of \(\mathcal{A}\) and \(\mathcal{B}\) admits a Jordan-Hölder filtration if and only if each object of \((F/G)\) admits a Jordan-Hölder filtration.
		\end{enumerate}
	\end{mainthm}

	The paper is organized as follows. In \cref{sect_prelim}, we begin with definitions and propositions about stability in abelian categories, which will be applied in the context of comma categories. Here we follow \cite{lu2013algebraic} for the Grothendieck group, \cite{macri2017lectures} for stability in abelian categories, and \cite{stacksproj} for Jordan-Hölder filtrations. 
	
	In \cref{sect_comma_categories}, we introduce the definition of comma categories (see \cref{def_comma}) and present our motivation for studying them: coherent systems (see \cref{exe_sist_coer}), based on the definition in \cite{he1996espaces}, as a particular example.
	
	In \cref{sect_cond_comma_abel}, we prove propositions ensuring the existence of coproducts, kernels, and cokernels on comma categories, and we present a theorem (see \cref{teo_comma_abeliana}) giving sufficient conditions for a comma category to be abelian. We conclude the section with a counterexample (see \cref{obs_contraexemplo}) showing that the converse does not hold.
	
	In \cref{sect_GG}, we explore monomorphisms, epimorphisms, and isomorphisms on comma categories, which allow us to define short exact sequences in this context. We then observe that morphisms in \(\mathcal{C}\) do not affect the computation of the Grothendieck group of a comma category, and we conclude the section with its explicit calculation (see \cref{teo_GG_comma}), which is given by the direct sum of the Grothendieck groups of \(\mathcal{A}\) and \(\mathcal{B}\). This result is derived by applying the universal property of the Grothendieck group (see \cref{teo_hom_idz_funct_adit}).
	
	In \cref{sect_estab_comma}, we discuss stability in the context of comma categories. We prove necessary and sufficient conditions for a comma category to be noetherian (see \cref{cor_cond_nec_suf_comma_noether}) and, using \cref{prop_exist_HN_func_estab}, we show how to induce HN filtrations in \((F/G)\) under certain conditions on the initial categories (see \cref{teo_HN_comma}). Conversely, under appropriate conditions on comma categories, HN filtrations can be induced on the initial categories (see \cref{teo_HN_sist_cat_inic_HN}).
	
	In \cref{sect_JH_comma}, we study Jordan-Hölder filtrations on comma categories. We give necessary and sufficient conditions for a comma category to be artinian (\cref{cor_cond_nec_suf_comma_artin}) and, using \cref{lem_exts_JH}, we show that a comma category has a Jordan-Hölder filtration if and only if \(\mathcal{A}\) and \(\mathcal{B}\) have Jordan-Hölder filtrations (see \cref{teo_JH_comma_impl_JH_A_B}).
	
	Finally, in \cref{sect_Var_Def_Comma}, inspired by the notion of coherent cosystems in \cite{le1993systemes} (see \cref{def_cos_coe}), we introduce slight modifications to the definition of comma categories to obtain another construction, the co-comma categories, exemplified in algebraic geometry by \textit{framed module} (see \cref{def_framed_module}). Further modifications yield additional variations of comma categories. We conclude the paper by presenting theorems that determine when co-comma categories and these other variations are abelian.

	\section*{Acknowledgments}
	The first author is supported by a Master's scholarship from CNPq grant number 132007/2023-1. The second author is supported by FAPESP, research grant 2023/15556-6. The authors would like to thank their advisor, Prof. Marcos Jardim, for his support throughout the development of this paper and for his invaluable suggestions for its improvement. Finally, the authors would like to thank Exequiel Rivas for pointing out that our categorical definition of coherent systems coincides with the definition of a comma category.

	\section{Preliminaries}
	\label{sect_prelim}
	The basic properties of abelian categories used throughout this text are taken from \cite{freyd1964abelian, grandis2021category, mac2013categories, riehl2017category, rotman2010advanced, rotman2009introduction, weibel2013k, yekutieli2019derived}. We begin by listing some notions from abelian category theory that will be used throughout this work.
	
	\subsection{Grothendieck Group}
	A fundamental concept in the study of stability in abelian categories is the Grothendieck group. Here, we use the references \cite{uitinduced} and \cite{lu2013algebraic}, with particular emphasis on the universal property of the Grothendieck group.
	
	\begin{definition}
		Let $\mathcal{A}$ be an abelian category. The \textbf{Grothendieck group} of $\mathcal{A}$, denoted by \(K(\mathcal{A})\), is the abelian group generated by the isomorphism classes $[A]$ of objects $A \in \mathcal{A}$ satisfying the relation 
		\[
		[A] = [A'] + [A'']
		\]
		for every short exact sequence $0 \rightarrow A' \rightarrow A \rightarrow A'' \rightarrow 0$ in $\mathcal{A}$. 
	\end{definition}
	
	\begin{definition}
		\label{defn_adit_seq_ex_curt}
		Let $\mathcal{A}$ be an abelian category, and let $G$ be an abelian group. A function $f: \operatorname{Obj}(\mathcal{A}) \rightarrow G$ is said to be \textbf{additive} if
		$$f(A) = f(A') + f(A'')$$
		for every short exact sequence $0 \rightarrow A' \rightarrow A \rightarrow A'' \rightarrow 0$ in $\mathcal{A}$.
	\end{definition}
	
	\begin{theorem}[Universal Property, {\cite[Proposition 2.4.14]{lu2013algebraic}}]
		Let \(\mathcal{A}\) be an abelian category, \(G\) an abelian group, and let \(f: \operatorname{Obj}(\mathcal{A}) \rightarrow G\) be an additive function. Then there exists a unique group homomorphism \(\overline{f}: K(\mathcal{A}) \rightarrow G\) such that 
		\[
		f = \overline{f} \circ \eta,
		\]
		where \(\eta: \operatorname{Obj}(\mathcal{A}) \rightarrow K(\mathcal{A})\) is the canonical map defined by \(\eta(A) = [A]\).
		\label{teo_hom_idz_funct_adit}
	\end{theorem}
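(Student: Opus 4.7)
The plan is to work directly with the explicit construction of $K(\mathcal{A})$ as the free abelian group $\mathbb{Z}[\mathrm{Iso}(\mathcal{A})]$ on isomorphism classes of objects, quotiented by the subgroup $R$ generated by the elements $[A]-[A']-[A'']$ coming from all short exact sequences $0\to A'\to A\to A''\to 0$.

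First, I would verify that any additive $f$ is constant on isomorphism classes, which is needed before one can even speak of a map on $\mathrm{Iso}(\mathcal{A})$. Applying \cref{defn_adit_seq_ex_curt} to the trivial sequence $0\to 0\to 0\to 0\to 0$ yields $f(0)=2f(0)$, hence $f(0)=0$. Then, for any isomorphism $\phi:A\to A'$, the sequence $0\to A\xrightarrow{\phi} A'\to 0 \to 0$ is short exact, so additivity gives $f(A')=f(A)+f(0)=f(A)$.

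Next, I would construct $\overline{f}$ in two steps. The previous observation lets me define a set map $\mathrm{Iso}(\mathcal{A})\to G$ by $[A]\mapsto f(A)$, which extends by the universal property of free abelian groups to a group homomorphism $\hat f:\mathbb{Z}[\mathrm{Iso}(\mathcal{A})]\to G$. The additivity hypothesis on $f$ is exactly the statement that $\hat f$ vanishes on each generator of the relation subgroup $R$, hence on all of $R$, so $\hat f$ descends to the desired homomorphism $\overline{f}:K(\mathcal{A})\to G$, satisfying $\overline{f}([A])=f(A)$, i.e.\ $\overline{f}\circ\eta=f$.

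Finally, uniqueness is immediate: the image of $\eta$ consists of the generators $[A]$ of $K(\mathcal{A})$, and the equation $\overline{f}\circ\eta=f$ prescribes the value of $\overline{f}$ on each generator, so any two homomorphisms with this property agree. There is no real obstacle here; the only point requiring care, and the one I would make sure to emphasize, is that isomorphism-invariance of $f$ must be derived from additivity at the outset, since the definition of $f$ is phrased on $\operatorname{Obj}(\mathcal{A})$ rather than on $\mathrm{Iso}(\mathcal{A})$.
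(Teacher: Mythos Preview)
Your proof is correct and follows the standard argument. Note, however, that the paper does not actually prove this theorem: it is stated in the preliminaries section as a known result, with a citation to \cite[Proposition 2.4.14]{lu2013algebraic}, and no proof is given. So there is no comparison to make against the paper's own argument; your write-up simply fills in what the paper leaves to the reference, and does so cleanly.
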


	\subsection{Stability in Abelian Categories}  
	In what follows, we present the central notions of stability in abelian categories, including key definitions and a fundamental result that will be essential for the development of stability in the context of the comma categories. Our presentation follows \cite{miranda2023moduli}, \cite{macri2017lectures}, and \cite{rudakov1997stability}.
	
	\begin{definition}
		Let \(\mathcal{A}\) be an abelian category, and let \(Z: K(\mathcal{A}) \rightarrow \mathbb{C}\) be an additive homomorphism. We say that \(Z\) is a \textbf{stability function} if, for all non-zero \(A \in \mathcal{A}\), we have:
		\begin{enumerate}
			\item \(\operatorname{Im}Z([A]) \geq 0\);
			
			\item If \(\operatorname{Im}Z([A]) = 0\) then \(\operatorname{Re}Z([A]) < 0\).
		\end{enumerate}
	\end{definition}

	\begin{definition}
		Given a stability function \(Z: K(\mathcal{A}) \rightarrow \mathbb{C}\), we define the \textbf{slope} of an object \(A \in \mathcal{A}\) as  
		\[\mu([A]) := - \dfrac{\operatorname{Re}Z([A])}{\operatorname{Im}Z([A])},\]  
		when \(\operatorname{Im}Z([A]) \neq 0\), and \(\mu([A]) = + \infty\) otherwise.
	\end{definition}
	
	
	\begin{definition}
		Let \(Z: K(\mathcal{A}) \rightarrow \mathbb{C}\) be a stability function. A non-zero object \(A \in \mathcal{A}\) is called \textbf{semistable} (resp. \textbf{stable}) if, for all proper non trivial subobjects \(B \subseteq A\), we have \(\mu([B]) \leq \mu([A])\) (resp. \(\mu([B]) < \mu([A])\)).
	\end{definition}
	
	
	\begin{definition}
		Let \(Z: K(\mathcal{A}) \rightarrow \mathbb{C}\) be an additive homomorphism. We call the pair \((\mathcal{A}, Z)\) a \textbf{stability condition} if
		\begin{enumerate}
			\item \(Z\) is a stability function, and
			
			\item Any non-zero \(A \in \mathcal{A}\) has a filtration, called the \textbf{Harder-Narasimhan filtration},
			\[
			0 = A_{0} \subseteq A_{1} \subseteq \cdots \subseteq A_{n-1} \subseteq A_{n} = A
			\]
			of objects \(A_{i} \in \mathcal{A}\) such that
			\( B_{i} = A_{i}/A_{i-1} \) is semistable for all for \( i = 1, \dots, n \) and \( \mu([B_1]) > \mu([B_2]) > \cdots > \mu([B_n]) \), where \(\mu\) is the slope of \(Z\).
		\end{enumerate}
	\end{definition}
	
	\begin{remark}
		For a stability condition \((\mathcal{A}, Z)\), the Harder-Narasimhan filtration (\emph{HN} filtration) of any object \(A \in \mathcal{A}\) is unique up to isomorphism.
	\end{remark}
	
	\begin{definition}
		Let \(\mathcal{A}\) be an abelian category.
		\begin{enumerate}
			\item  We say an object \( A \) of \( \mathcal{A} \) is \textbf{noetherian} if and only if it satisfies the ascending chain condition for subobjects, i.e., for every ascending chain of subobjects of \(A\)
			\[
			A_{0} \subseteq A_{1} \subseteq \cdots \subseteq A_{i} \subseteq \cdots 
			\]
			there exists \(m \in \mathbb{N}\) such that the monomorphisms \(A_{i} \subseteq A_{i+1}\) are isomorphisms for all \(i \geq m\).
			
			\item We say \( \mathcal{A} \) is \textbf{noetherian} if every object of \( \mathcal{A} \) is noetherian.
		\end{enumerate}
	\end{definition}
	
	\begin{proposition}[{\cite[Proposition 4.10]{macri2017lectures}}]
		Let \( Z : K(\mathcal{A}) \to \mathbb{C} \) be a stability function. Assume that
		\begin{itemize}
			\item \( \mathcal{A} \) is noetherian, and
			\item the image of the imaginary part of \( Z \) is discrete in \( \mathbb{R} \).
		\end{itemize}
		Then every non-zero object in \(\mathcal{A}\) admits a Harder-Narasimhan filtration with respect to \(Z\).
		\label{prop_exist_HN_func_estab}
	\end{proposition}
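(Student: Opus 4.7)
The plan is to proceed by induction on the value of $\operatorname{Im} Z([A])$. Since $\operatorname{Im} Z \geq 0$ for non-zero objects and the image of $\operatorname{Im} Z$ is discrete in $\mathbb{R}$, the values attained lie in a well-ordered subset of $\mathbb{R}_{\geq 0}$, so the induction is valid. For the base case $\operatorname{Im} Z([A]) = 0$, the short exact sequence $0 \to B \to A \to A/B \to 0$ combined with additivity of $Z$ and non-negativity of $\operatorname{Im} Z$ forces $\operatorname{Im} Z([B]) = 0$ for every non-zero subobject $B \subseteq A$, so $\mu([B]) = +\infty = \mu([A])$. Thus $A$ is semistable and the filtration $0 \subseteq A$ works.

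For the inductive step, the central technical device I would build is a \emph{maximal destabilizing subobject}: a non-zero $B_{\max} \subseteq A$ that is itself semistable and satisfies $\mu([B']) < \mu([B_{\max}])$ for every $B_{\max} \subsetneq B' \subseteq A$. As a preliminary I would establish the seesaw trichotomy: for any short exact sequence $0 \to A' \to A \to A'' \to 0$, the additivity $Z([A]) = Z([A']) + Z([A''])$ together with the sign condition on $\operatorname{Im} Z$ implies that $\mu([A'])$, $\mu([A])$, $\mu([A''])$ are totally ordered and that equality in one inequality propagates to the others. Given this, I would show the supremum of $\mu$ over non-zero subobjects of $A$ is attained: the denominator $\operatorname{Im} Z([B])$ is bounded above by $\operatorname{Im} Z([A])$ and takes only finitely many values between $0$ and $\operatorname{Im} Z([A])$ by discreteness, which caps how refined the competing slopes can get; noetherianness then stops the ascending chain obtained by repeatedly enlarging $B$ to a strictly larger destabilizing subobject, yielding $B_{\max}$.

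Once $B_{\max}$ is constructed, the quotient $A/B_{\max}$ has strictly smaller $\operatorname{Im} Z$, because $\operatorname{Im} Z([B_{\max}]) > 0$ (the case $\operatorname{Im} Z([B_{\max}]) = 0$ is absorbed by the base case). By the inductive hypothesis, $A/B_{\max}$ admits an HN filtration; pulling it back to $A$ and prepending $B_{\max}$ produces a candidate filtration of $A$. The slope inequality at the junction, $\mu([B_{\max}]) > \mu([A_2/B_{\max}])$ where $A_2$ is the preimage of the first step of the HN filtration of $A/B_{\max}$, follows from the maximality of $B_{\max}$ combined with the seesaw: $B_{\max} \subsetneq A_2$ forces $\mu([A_2]) < \mu([B_{\max}])$, which propagates to $\mu([A_2/B_{\max}]) < \mu([B_{\max}])$.

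The hardest part is the construction of $B_{\max}$, where both hypotheses are genuinely used and interact in a non-trivial way: without discreteness, slopes might cluster without a maximum being attained; without noetherianness, even if the maximum is attained one cannot guarantee that finitely many enlargements suffice to reach it. The subtle verification is that discreteness of $\operatorname{Im} Z$ indeed limits how the slopes of destabilizing subobjects can accumulate, so that each step of the enlargement process genuinely increases a controlled quantity, ensuring termination by noetherianness.
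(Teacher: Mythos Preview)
The paper does not supply its own proof of this proposition; it is quoted from \cite[Proposition~4.10]{macri2017lectures} and used as a black box. Your outline is essentially the standard argument found there: establish the seesaw inequality, use noetherianness together with discreteness of $\operatorname{Im} Z$ to produce a maximal destabilizing subobject, and iterate on the quotient.

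One genuine wrinkle in your write-up: the induction on $\operatorname{Im} Z([A])$ stalls when $\operatorname{Im} Z([B_{\max}]) = 0$, since then $\operatorname{Im} Z([A/B_{\max}]) = \operatorname{Im} Z([A])$ and the inductive parameter does not drop. Your phrase ``absorbed by the base case'' does not resolve this, because the base case concerns $\operatorname{Im} Z([A]) = 0$, not $\operatorname{Im} Z([B_{\max}]) = 0$. The clean fix, and the one used in the cited reference, is to argue termination directly from noetherianness rather than by induction on $\operatorname{Im} Z$: the successive preimages $A_1 \subsetneq A_2 \subsetneq \cdots$ form a strictly ascending chain in $A$ (each $A_{i+1}/A_i$ is the non-zero maximal destabilizing subobject of $A/A_i$), so the chain must stabilise at some $A_n = A$. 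This sidesteps the $\operatorname{Im} Z([B_{\max}]) = 0$ case entirely and makes the role of discreteness purely local to the construction of $B_{\max}$.
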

	
	\subsection{Jordan-Hölder Filtrations}
	Here we use the reference \cite{stacksproj}. 
	\begin{definition}
		Let \(\mathcal{A}\) be an abelian category. An object \(A\) of \(\mathcal{A}\) is said to be \textbf{simple} if it is non-zero and the only subobjects of \(A\) are \(0\) and \(A\).
	\end{definition}
	
	\begin{definition}
		Let \( \mathcal{A} \) be an abelian category and \(A \in \mathcal{A} \).  
		If there exists a filtration
		\[
		0 \subseteq A_{0} \subseteq A_{1} \subseteq \cdots \subseteq A_{n-1} \subseteq A_{n} = A
		\]
		by subobjects such that \( A_i / A_{i-1} \) is simple for \( i = 1, \dots, n \),
		then we say that the filtration \( \{A_i\}_{i=0}^{n} \) is a \textbf{Jordan-Hölder filtration} of length \( n \).
	\end{definition}
	
	\begin{definition}
		Let \(\mathcal{A}\) be an abelian category.
		\begin{enumerate}
			\item  We say an object \( A \) of \( \mathcal{A} \) is \textbf{artinian} if and only if it satisfies the descending chain condition for subobjects, i.e.,  for every descending chain of subobjects of \(A\)
			\[
			\cdots \subseteq A_{i} \subseteq A_{i-1} \subseteq \cdots \subseteq A_{1} \subseteq A_{0}
			\]
			there exists \(m \in \mathbb{N}\) such that the monomorphisms \(A_{i+1} \subseteq A_{i}\) are isomorphisms for all \(i \geq m\).
			
			\item We say \( \mathcal{A} \) is \textbf{artinian} if every object of \( \mathcal{A} \) is artinian.
		\end{enumerate}
	\end{definition}

	\begin{lemma}[{\cite[Lemma 12.9.6]{stacksproj}}]
		Let \( \mathcal{A} \) be an abelian category. Let \( A \) be an object of \( \mathcal{A} \). The following are equivalent:
		\begin{enumerate}
			\item \( A \) is noetherian and artinian, and
			\item There exists a filtration \( 0 \subseteq A_0 \subseteq A_1 \subseteq \dots \subseteq A_n = A \) by subobjects such that \( A_i / A_{i-1} \) is simple for \( i = 1, \dots, n \).
		\end{enumerate}
		\label{lem_exts_JH}
	\end{lemma}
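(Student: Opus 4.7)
The plan is to prove both implications using the standard behavior of noetherian and artinian objects under short exact sequences. The crucial auxiliary fact is that, in any short exact sequence $0 \to A' \to A \to A'' \to 0$ in an abelian category, the middle term $A$ is noetherian (respectively artinian) if and only if both $A'$ and $A''$ are. This follows by the usual diagram chase with subobjects: an ascending (or descending) chain in $A$ produces chains of images in $A''$ and of pullbacks along $A' \hookrightarrow A$, which stabilize by hypothesis and thereby force the original chain to stabilize.

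For the direction $(2) \Rightarrow (1)$, I would argue by induction on the length $n$ of the filtration. In the base case the object is either zero or simple, and in both situations the chain conditions hold vacuously. For the inductive step, apply the auxiliary fact to the short exact sequence
\[
0 \to A_{n-1} \to A_n \to A_n/A_{n-1} \to 0,
\]
in which $A_{n-1}$ is noetherian and artinian by induction (carrying a filtration of length $n-1$ with simple quotients), while $A_n/A_{n-1}$ is simple and hence trivially noetherian and artinian. It follows that $A_n = A$ enjoys both properties.

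For the direction $(1) \Rightarrow (2)$, I would construct the filtration recursively, starting from $A_0 = 0$. If $A_i \subsetneq A$ has already been defined, then $A/A_i$ is a non-zero quotient of an artinian object and hence artinian; applying the descending chain condition to its poset of non-zero subobjects yields a minimal such subobject $S_i$, which must be simple by minimality. Taking $A_{i+1}$ to be the preimage of $S_i$ under the quotient map $A \to A/A_i$ produces $A_i \subsetneq A_{i+1}$ with $A_{i+1}/A_i \cong S_i$ simple. The noetherian hypothesis on $A$ then forces this strictly ascending chain to terminate, necessarily at $A$.

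The main obstacle is that one is working with subobjects in an abstract abelian category rather than with elements of modules, so each argument must be carried out categorically via images, preimages, and quotients. In particular, the auxiliary fact about extensions relies on the interplay between subobjects of $A$, their intersections with $A' \subseteq A$, and their images in $A''$; and the $(1) \Rightarrow (2)$ construction depends on the order-preserving bijection between subobjects of $A/A_i$ and subobjects of $A$ containing $A_i$. Both of these are standard consequences of the abelian category axioms but warrant some care to phrase cleanly.
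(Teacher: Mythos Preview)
Your proof is correct and follows the standard argument. Note, however, that the paper does not actually prove this lemma: it is stated in the Preliminaries section as a citation of \cite[Lemma 12.9.6]{stacksproj}, so there is no in-paper proof to compare against. Your approach matches the usual Stacks Project argument.
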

	
			%
	%

	\section{Comma Categories}
	\label{sect_comma_categories}
	Now, we introduce the main motivation for this work, the \textit{comma categories}.
	\begin{definition}
		\label{def_comma}
		Let $\mathcal{A}$, $\mathcal{B}$, and $\mathcal{C}$ be categories, and let $F: \mathcal{A} \rightarrow \mathcal{C}$ and $G: \mathcal{B} \rightarrow \mathcal{C}$ be functors. We define $(F/G)$ such that
		\begin{itemize}
			\item Objects of $(F/G)$: are triples $(A,B,\alpha)$, where $A \in \operatorname{Obj}(\mathcal{A})$, $B \in \operatorname{Obj}(\mathcal{B})$ and $\alpha: F(A) \rightarrow G(B) \in \operatorname{Mor}(\mathcal{C})$.
			
			\item Morphisms of $(F/G)$: a morphism between $(A,B,\alpha)$ and $(A',B',\alpha')$, objects of \((F/G)\), is a pair $(f,g)$ with $f: A \rightarrow A' \in \operatorname{Mor}(\mathcal{A})$, $g: B \rightarrow B' \in \operatorname{Mor}(\mathcal{B})$ such that the following diagram commutes:
			\begin{equation*}
				\begin{tikzcd}
					F(A) \arrow[r, "F(f)"] \arrow[d, "\alpha"'] & F(A') \arrow[d, "\alpha'"] \\
					G(B) \arrow[r, "G(g)"']                & G(B')        \end{tikzcd}
			\end{equation*}
			that is,
			\begin{equation*}
				\alpha' \circ F(f) = G(g) \circ \alpha.
			\end{equation*}
		\end{itemize}
	\end{definition}
	
	First, we show that $(F/G)$ is indeed a category. The proof is divided into two parts:
	\begin{enumerate}
		\item Let $(A,B,\alpha) \in \operatorname{Obj}(F/G)$ be any object, so $\alpha: F(A) \rightarrow G(B)$. We define the identity morphism by $1_{(A,B,\alpha)} = (1_A, 1_B)$. Note that
		\begin{equation}
			\alpha \circ F(1_{A}) = \alpha \circ 1_{F(A)} = \alpha = 1_{G(B)} \circ \alpha = G(1_{B}) \circ \alpha .
			\label{alfa_iso_comma}
		\end{equation}
		By \cref{alfa_iso_comma}, the following diagram commutes:
		\begin{equation*}
			\begin{tikzcd}
				F(A) \arrow[r, "F(1_{A})"] \arrow[d, "\alpha"'] & F(A) \arrow[d, "\alpha"] \\
				G(B) \arrow[r, "G(1_{B})"']                     & G(B)                    
			\end{tikzcd}
		\end{equation*}
		Thus, $1_{(A,B,\alpha)}$ is a morphism in $(F/G)$.
		
		\item Let $(f,g): (A,B,\alpha) \rightarrow (A',B',\alpha')$ and $(f',g'): (A',B',\alpha') \rightarrow (A'',B'',\alpha'')$ be morphisms in $(F/G)$. In this case, the following diagrams commute:
		\begin{equation*}
			\begin{tikzcd}
				F(A) \arrow[r, "F(f)"] \arrow[d, "\alpha"'] & F(A') \arrow[d, "\alpha'"] &  & F(A') \arrow[r, "F(f')"] \arrow[d, "\alpha'"'] & F(A'') \arrow[d, "\alpha''"] \\ G(B) \arrow[r, "G(g)"']   & G(B')     &  & G(B') \arrow[r, "G(g')"']                      & G(B'')   
			\end{tikzcd}
		\end{equation*}
		that is,
		\begin{equation}
			\alpha' \circ F(f) = G(g) \circ \alpha .
			\label{comma_alpha}
		\end{equation}
		\begin{equation}
			\alpha'' \circ F(f') = G(g') \circ \alpha' .
			\label{comma_alpha'}
		\end{equation}
		We define the composition in $(F/G)$ by
		$$ (f',g') \circ (f,g) := (f' \circ f, g' \circ g). $$
		To prove it is well defined, we must show that $(f',g') \circ (f,g)$ is a morphism in $(F/G)$, that the operation is associative, and that the composition with the identities is trivial.
		The first follows from the commutativity of the diagram:
		\begin{equation*}
			\begin{tikzcd}
				F(A) \arrow[d, "\alpha"'] \arrow[rr, "F(f' \circ f)"] &  & F(A'') \arrow[d, "\alpha''"] \\
				G(B) \arrow[rr, "G(g' \circ g)"']   &  & G(B'')                  \end{tikzcd}
		\end{equation*}
		In fact, by \cref{comma_alpha'} and \cref{comma_alpha}
		\begin{eqnarray*}
			\alpha'' \circ F(f' \circ f) 
			&=&
			\alpha'' \circ (F(f') \circ F(f)) = (\alpha'' \circ F(f')) \circ F(f) 
			\\
			&=&
			(G(g') \circ \alpha') \circ F(f) =
			G(g')  \circ (\alpha' \circ F(f))  
			\\
			&=&
			G(g') \circ (G(g) \circ \alpha) = G(g' \circ g) \circ \alpha.
		\end{eqnarray*}
		The latter two can be seen as follows.
		\begin{enumerate}
			\item Consider $(f,g): (A,B,\alpha) \rightarrow (A',B',\alpha')$, $(f',g'): (A',B',\alpha') \rightarrow (A'',B'',\alpha'')$ and $(f'',g''): (A'',B'',\alpha'') \rightarrow (A''',B''',\alpha''')$ morphisms in \((F/G)\). Thus,
			\begin{eqnarray*}
				(f'',g'') \circ ((f',g') \circ (f,g)) 
				&=&
				(f'',g'') \circ (f' \circ f, g' \circ g) \\
				&=&
				(f'' \circ (f' \circ f),g'' \circ (g' \circ g)) \\
				&=& 
				((f'' \circ f') \circ f, (g'' \circ g') \circ g) \\
				&=&
				(f'' \circ f', g'' \circ g') \circ (f,g) \\
				&=&
				((f'',g'') \circ (f',g')) \circ (f,g) .
			\end{eqnarray*}
			
			\item Let $(f,g): (A,B,\alpha) \rightarrow (A',B',\alpha')$ be a morphism in $(F/G)$. Then,
			$$        (f,g) \circ 1_{(A,B,\alpha)} = (f,g) \circ (1_{A},1_{B}) = (f \circ 1_{A}, g \circ 1_{B}) = (f,g).$$
			$$      1_{(A',B',\alpha')} \circ (f,g)  = (1_{A'},1_{B'}) \circ (f,g)  = (1_{A'} \circ f, 1_{B'} \circ g) = (f,g).$$
		\end{enumerate}    
	\end{enumerate}
	Therefore, \((F/G)\) forms a category, called a \textbf{comma category}
	
	A concrete example of a comma category, which motivated our interest in studying it, is the category of coherent systems described in \cite{he1996espaces}.
	\begin{example}[Coherent Systems]
		\label{exe_sist_coer}
		Let \(X\) be an algebraic variety. An \textbf{algebraic system} on \(X\) is a triple \(\Lambda = (\Gamma, \sigma, \mathscr{F})\), 
		where \( \mathscr{F} \) is an \( \mathcal{O}_X \)-module, \( \Gamma \) is a \( \mathbb{C} \)-vector space, and \( \sigma : \Gamma  \rightarrow H^0(\mathscr{F}) \) is a \( \mathbb{C} \)-linear application. A \textbf{morphism of algebraic systems} on \(X\) is a pair of morphisms
		\(\Theta = (\alpha, \beta) : (\Gamma, \sigma, \mathscr{F}) \rightarrow (\Gamma', \sigma', \mathscr{F}')
		\), where \( \beta : \mathscr{F} \rightarrow \mathscr{F}' \) is a morphism of \( \mathcal{O}_X \)-modules and \( \alpha : \Gamma \rightarrow \Gamma' \) is a \( \mathbb{C} \)-linear morphism such that \(\sigma' \circ \alpha = H^0(\beta) \circ \sigma\). 
		
		An algebraic system \( \Lambda = (\Gamma, \sigma, \mathscr{F}) \) on \( X \) is called a \textbf{coherent system} if \( \mathscr{F} \) is a coherent \( \mathcal{O}_X \)-module and if \( \Gamma \) is of finite dimension. The morphisms of coherent systems are the morphisms of algebraic systems.
		
		We can rewrite the definition of coherent systems using the language of comma categories. To do so, consider \(\mathcal{A} = \operatorname{Vect}^{f}_{\mathbb{C}}\), \(\mathcal{B}=\operatorname{Coh}(X)\) and \(\mathcal{C} = \operatorname{Vect}^{f}_{\mathbb{\mathbb{C}}}\); \(F = \operatorname{Id}\) and \(G = H^{0}\). A coherent system is a triple \((\Gamma, \mathscr{F}, \sigma)\) with \(\Gamma \in \mathcal{A}\), \(\mathscr{F} \in \mathcal{B}\) and \(\sigma: F(\Gamma) \rightarrow G(\mathscr{F}) \in \operatorname{Mor}(\mathcal{C})\). A morphism between coherent systems is a pair \((\alpha, \beta): (\Gamma, \mathscr{F}, \sigma) \rightarrow (\Gamma', \mathscr{F}', \sigma')\), with \(\alpha: \Gamma \rightarrow \Gamma' \in \operatorname{Mor}(\mathcal{A})\) and \(\beta: \mathscr{F} \rightarrow \mathscr{F}' \in \operatorname{Mor}(\mathcal{B})\), such that \(\sigma' \circ F(\alpha) = G(\beta) \circ \sigma\).
	\end{example}

	After defining a category, a natural question arises: ``\textit{When is this category abelian?}''. Our next step is to establish conditions for a comma category to be an abelian category.

	\section{Conditions for Comma Categories to be Abelian}
	\label{sect_cond_comma_abel}
	In this section, we present the results that answer our first question, namely, the conditions on the categories and functors involved in the definition of a comma category for it to be abelian.
	\begin{proposition}
		Let $\mathcal{A}$, $\mathcal{B}$, and $\mathcal{C}$ be additive categories and let $F: \mathcal{A} \rightarrow \mathcal{C}$ and $G: \mathcal{B} \rightarrow \mathcal{C}$ be additive functors. Then $(F/G)$ is an additive category.
		\label{prop_comma_aditiva}
	\end{proposition}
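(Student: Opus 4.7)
My plan is to verify the three defining properties of an additive category in turn: existence of a zero object, an abelian group structure on each Hom-set compatible with composition, and existence of finite biproducts. Throughout, the key technical input is that additive functors between additive categories preserve zero objects and finite biproducts, so the structure on $(F/G)$ can be built componentwise from the structures on $\mathcal{A}$ and $\mathcal{B}$, with the $\mathcal{C}$-component handled by functoriality.

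First, for the zero object, I would take $(0_{\mathcal{A}}, 0_{\mathcal{B}}, 0)$, where the third entry is the unique morphism $F(0_{\mathcal{A}}) \to G(0_{\mathcal{B}})$ available because $F(0_{\mathcal{A}})$ and $G(0_{\mathcal{B}})$ are zero objects in $\mathcal{C}$ (additive functors preserve zero objects). The uniqueness of zero morphisms in $\mathcal{A}$ and $\mathcal{B}$ then makes $(0,0)$ the unique morphism into, and out of, this object from any $(A,B,\alpha)$.

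Next, for the Hom-set structure, given two parallel morphisms $(f,g), (f',g') : (A,B,\alpha) \to (A',B',\alpha')$, I would define $(f,g) + (f',g') := (f+f', g+g')$, using the abelian group structures on $\operatorname{Hom}_{\mathcal{A}}(A,A')$ and $\operatorname{Hom}_{\mathcal{B}}(B,B')$. The compatibility condition to check is
\[
\alpha' \circ F(f+f') = G(g+g') \circ \alpha,
\]
which reduces, by additivity of $F$ and $G$ together with bilinearity of composition in $\mathcal{C}$, to the sum of the two individual commutative squares for $(f,g)$ and $(f',g')$. Bilinearity of composition in $(F/G)$ is then immediate from componentwise bilinearity in $\mathcal{A}$ and $\mathcal{B}$.

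Finally, for biproducts, given $(A,B,\alpha)$ and $(A',B',\alpha')$, I would set
\[
(A,B,\alpha) \oplus (A',B',\alpha') := (A \oplus A',\; B \oplus B',\; \widetilde{\alpha}),
\]
where $\widetilde{\alpha} : F(A \oplus A') \to G(B \oplus B')$ is defined via the canonical isomorphisms $F(A \oplus A') \cong F(A) \oplus F(A')$ and $G(B \oplus B') \cong G(B) \oplus G(B')$ (which exist precisely because $F$ and $G$ are additive) by taking $\widetilde{\alpha} = \alpha \oplus \alpha'$ in $\mathcal{C}$. The structural morphisms are the componentwise projections and injections $(\pi_A, \pi_B)$, $(\iota_A, \iota_B)$, etc.; checking that they lie in $(F/G)$ uses that $F$ and $G$ take the biproduct projections/injections to those in $\mathcal{C}$, so the relevant squares commute by construction of $\widetilde{\alpha}$. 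The biproduct identities (such as $\pi_A \iota_A = 1$, $\pi_A \iota_{A'} = 0$, and $\iota_A \pi_A + \iota_{A'} \pi_{A'} = 1$) hold componentwise and therefore hold in $(F/G)$. The main, and essentially only, subtlety is the correct interpretation of $\widetilde{\alpha}$ as a genuine morphism $F(A\oplus A') \to G(B \oplus B')$ rather than between the separate direct sums; once this identification is made via the canonical isomorphisms, the rest is routine.
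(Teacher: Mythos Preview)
Your proposal is correct and follows the same overall strategy as the paper: verify the zero object, the abelian-group structure on Hom-sets with bilinear composition, and finite (co)products, all built componentwise from $\mathcal{A}$ and $\mathcal{B}$ using additivity of $F$ and $G$.

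The one noteworthy difference is in the treatment of direct sums. The paper verifies the \emph{universal property} of the coproduct directly: given test morphisms $(f_1,g_1)$ and $(f_2,g_2)$ into $(X,Y,H)$, it constructs the induced map $(\varphi,\psi)$ from the universal properties in $\mathcal{A}$ and $\mathcal{B}$, and then spends some effort showing that $(\varphi,\psi)$ is actually a morphism in $(F/G)$ (i.e., that $H\circ F(\varphi)=G(\psi)\circ\beta$), via a uniqueness argument using the coproduct $F(A\oplus A')$. You instead invoke the \emph{equational} characterization of biproducts ($\pi\iota=1$, $\pi\iota'=0$, $\iota\pi+\iota'\pi'=1$), which holds componentwise and hence in $(F/G)$ once the structural maps are known to lie there. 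Your route is shorter and avoids the diagram chase; the paper's route is more explicit about the universal property and does not presuppose the equivalence between the two descriptions of biproducts in a preadditive category.
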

	\begin{proof}
		We will show that $(F/G)$ is linear. Let $(A, B, \alpha)$ and $(A', B', \alpha')$ be arbitrary objects in $(F/G)$. From the fact that $\mathcal{A}$ and $\mathcal{B}$ are linear, for morphisms
		$(f,g),(f',g') \in \operatorname{Hom}_{(F/G)}((A,B,\alpha),(A',B',\alpha'))$ define
		$$(f,g) + (f',g') := (f +_{\mathcal{A}} f', g +_{\mathcal{B}} g').$$
		We will show that $(f, g) + (f', g')$ is a morphism in $(F/G)$, that is, that the diagram below commutes.
		\begin{equation*}
			\begin{tikzcd}
				F(A) \arrow[rr, "F(f +_{\mathcal{A}}f')"] \arrow[d, "\alpha"'] &  & F(A') \arrow[d, "\alpha'"] \\
				G(B) \arrow[rr, "G(g+_{\mathcal{B}}g')"']                      &  & G(B')                     
			\end{tikzcd}
		\end{equation*}
		Since $(f, g)$ and $(f', g')$ are morphisms in $(F/G)$, the diagrams below commute.
		\begin{equation*}
			\begin{tikzcd}
				F(A) \arrow[r, "F(f)"] \arrow[d, "\alpha"'] & F(A') \arrow[d, "\alpha'"] & F(A) \arrow[r, "F(f')"] \arrow[d, "\alpha"'] & F(A') \arrow[d, "\alpha'"] \\ G(B) \arrow[r, "G(g)"'] & G(B')  & G(B) \arrow[r, "G(g')"'] & G(B')                     
			\end{tikzcd}
		\end{equation*}
		so,
		\begin{equation}
			\label{alfa_alfa'_mor_comm}
			\alpha' \circ F(f) = G(g) \circ \alpha \ \ \ \mbox{ and } \ \ \     \alpha' \circ F(f') = G(g') \circ \alpha .
		\end{equation}
		Since  \(\mathcal{C}\) is linear and \(F\) and \(G\) are additive functors, from equation \cref{alfa_alfa'_mor_comm} we obtain
		\begin{eqnarray*}
			\alpha' \circ F(f +_{\mathcal{A}}f') 
			&=&
			\alpha' \circ (F(f) +_{\mathcal{C}} F(f')) 
			\\
			&=&
			(\alpha' \circ F(f)) +_{\mathcal{C}} (\alpha' \circ F(f')) 
			\\
			&=&
			(G(g) \circ \alpha ) +_{\mathcal{C}} (G(g') \circ  \alpha) 
			\\ 
			&=&
			(G(g) +_{\mathcal{C}} G(g')) \circ \alpha \\
			&=&
			G(g +_{\mathcal{B}} g') \circ \alpha.
		\end{eqnarray*}
		For a scalar \(n \in \mathbb{Z}\) and \((f,g) \in \operatorname{Hom}_{(F/G)}((A,B,\alpha),(A',B',\alpha'))\), define
		\[
		n(f,g):= (nf,ng).
		\]
		Since  \(\mathcal{C}\) is linear and \(F\) and \(G\) are additive functors, from equation \cref{alfa_alfa'_mor_comm} we obtain
		\[
		\alpha' \circ F(nf) = \alpha' \circ nF(f) = n(\alpha' \circ F(f)) = n(G(g) \circ \alpha) = nG(g) \circ \alpha = G(ng) \circ \alpha,\]
		so \(n(f,g) \in \operatorname{Hom}_{(F/G)}((A,B,\alpha),(A',B',\alpha'))\). Based on the addition and scalar multiplication defined above, and noting that \(\operatorname{Hom}_{\mathcal{A}}(A, A')\) and \(\operatorname{Hom}_{\mathcal{B}}(B, B')\) are \(\mathbb{Z}\)-modules, we conclude that \(\operatorname{Hom}_{(F/G)}((A, B, \alpha), (A', B', \alpha'))\) is also a \(\mathbb{Z}\)-module.
		
		Let \((f, g), (f', g') \in \operatorname{Hom}_{(F/G)}((A,B,\alpha), (A',B',\alpha'))\). Consider also \((\overline{f}, \overline{g}) \in \operatorname{Hom}_{(F/G)}((A',B',\alpha'), (A'',B'',\alpha''))\).
		Thus,
		
		\begin{eqnarray*}
			(\overline{f},\overline{g}) \circ ((f,g) + (f',g')) 
			&=&
			(\overline{f},\overline{g}) \circ (f +_{\mathcal{A}} f', g +_{\mathcal{B}} g') 
			\\
			&=&(\overline{f} \circ (f +_{\mathcal{A}} f'), \overline{g} \circ (g +_{\mathcal{B}} g')) \\
			&=&
			(\overline{f} \circ f +_{\mathcal{A}} \overline{f} \circ  f', \overline{g} \circ g +_{\mathcal{B}} \overline{g} \circ  g') 
			\\
			&=&(\overline{f} \circ f, \overline{g} \circ g) +
			(\overline{f} \circ  f' , \overline{g} \circ  g') 
			\\
			&=& (\overline{f},\overline{g}) \circ (f,g) + (\overline{f},\overline{g}) \circ (f',g')   .
		\end{eqnarray*}
		Similarly
		$$((\overline{f},\overline{g}) + (f,g)) \circ  (f',g') = (\overline{f},\overline{g}) \circ (f',g') + (f,g) \circ (f',g'),$$
		for morphisms in \((F/G)\) for which the composition is defined.
		This way, \((F/G)\) is a linear category.
		
		Now, we will show that $(F/G)$ has a zero object. Let $0_{\mathcal{A}}$, $0_{\mathcal{B}}$, and $0_{\mathcal{C}}$ be the zero objects of $\mathcal{A}$, $\mathcal{B}$, and $\mathcal{C}$, respectively. By property of additive functors we have \(F(0_{\mathcal{A}}) = 0_{\mathcal{C}} = G(0_{\mathcal{B}})\). Furthermore, consider
		$0 \in \operatorname{Hom}_{\mathcal{C}}(0_{\mathcal{C}},0_{\mathcal{C}})$. We will prove that $(0_{\mathcal{A}}, 0_{\mathcal{B}}, 0)$ is the zero object of $(F/G)$. Indeed, let $(A, B, \alpha)$ be an arbitrary object of $(F/G)$. Since $0_{\mathcal{A}}$, $0_{\mathcal{B}}$, and $0_{\mathcal{C}}$ are initial objects, there exist unique morphisms \(f: 0_{\mathcal{A}} \rightarrow A \in \operatorname{Mor}(\mathcal{A}) \), \(g: 0_{\mathcal{B}} \rightarrow B \in \operatorname{Mor}(\mathcal{B})\), and \(h: 0_{\mathcal{C}} \rightarrow G(B) \in \operatorname{Mor}(\mathcal{C}) \). This implies that the diagram below commutes.
		\begin{equation*}
			\begin{tikzcd}
				0_\mathcal{C} = F(0_{\mathcal{A}}) \arrow[r, "F(f)"] \arrow[d, "0"'] \arrow[rd, "h", dashed] & F(A) \arrow[d, "\alpha"] \\ 0_\mathcal{C} = G(0_{\mathcal{B}}) \arrow[r, "G(g)"'] & G(B)  
			\end{tikzcd}
		\end{equation*}
		Thus, $(f, g): (0_{\mathcal{A}}, 0_{\mathcal{B}}, 0) \rightarrow (A, B, \alpha)$ is a morphism in \((F/G)\). Moreover, if \((\Tilde{f},\Tilde{g}): (0_{\mathcal{A}}, 0_{\mathcal{B}}, 0) \rightarrow (A, B, \alpha)\) is a morphism in \((F/G)\) we have \(\Tilde{f}: 0_{\mathcal{A}} \rightarrow A\), thus \(\Tilde{f} = f\) because \( 0_{\mathcal{A}}\) is an initial object in \(\mathcal{A}\) and \(\Tilde{g}: 0_{\mathcal{B}} \rightarrow B\), thus \(\Tilde{g} = g\) because \( 0_{\mathcal{B}}\) is an initial object in \(\mathcal{B}\), i.e., \((\Tilde{f},\Tilde{g}) = (f,g)\). This way, $(0_{\mathcal{A}}, 0_{\mathcal{B}}, 0)$ is an initial object in \((F/G)\). Similarly, it can be shown that $(0_{\mathcal{A}}, 0_{\mathcal{B}}, 0)$ is a final object. Hence, $(0_{\mathcal{A}}, 0_{\mathcal{B}}, 0)$ is both initial and final in \((F/G)\), and thus a zero object.
		
		Finally, we will show the existence of finite coproducts in $(F/G)$. Let $(A, B, \alpha)$ and $(A', B', \alpha')$ be arbitrary objects in $(F/G)$. By assumption, $\mathcal{A}$ and $\mathcal{B}$ admit finite coproducts, so consider $(A \oplus A', i_{1}^{A}, i_{2}^{A})$ the coproduct of $A$ and $A'$, and $(B \oplus B', i_{1}^{B}, i_{2}^{B})$ the coproduct of $B$ and $B'$. Since additive functors preserve coproducts, it follows that \((F(A \oplus A'), F(i_{1}^{A}), F(i_{2}^{A}))\) is the coproduct of \(F(A)\) and \(F(A')\) and \((G(B \oplus B'), G(i_{1}^{B}), G(i_{2}^{B}))\) is the coproduct of \(G(B)\) and \(G(B')\).
		Thus, we have the following diagram
		\begin{equation*}
			\begin{tikzcd}
				F(A) \arrow[r, "F(i_{1}^{A})"] \arrow[rd, "G(i_{1}^{B}) \circ \alpha"'] & F(A \oplus A') \arrow[d, "\beta", dashed] & F(A') \arrow[l, "F(i_{2}^{A})"'] \arrow[ld, "G(i_{2}^{B}) \circ \alpha'"] \\ & G(B \oplus B')  & 
			\end{tikzcd}
		\end{equation*}
		where $\beta$ comes from the universal property of the coproduct of $F(A)$ and $F(A')$. Thus, $\beta$ is unique such that
		\begin{equation}
			G(i_{1}^{B}) \circ \alpha = \beta \circ F(i_{1}^{A}) \ \mbox{ and } \ G(i_{2}^{B}) \circ  \alpha' = \beta \circ F(i_{2}^{A}) .
			\label{eq_beta_cop_comma}
		\end{equation}
		From \cref{eq_beta_cop_comma} we have that the diagram below commutes.
		\begin{equation*}
			\begin{tikzcd}
				F(A) \arrow[d, "\alpha"'] \arrow[r, "F(i_{1}^{A})"] & F(A \oplus A') \arrow[d, "\beta"] &  & F(A') \arrow[d, "\alpha'"'] \arrow[r, "F(i_{2}^{A})"] & F(A \oplus A') \arrow[d, "\beta"] \\
				G(B) \arrow[r, "G(i_{1}^{B})"']                     & G(B \oplus B')                    &  & G(B') \arrow[r, "G(i_{2}^{B})"']                      & G(B \oplus B')                   
			\end{tikzcd}
		\end{equation*}
		Thus \((i_{1}^{A}, i_{1}^{B}): (A,B, \alpha) \rightarrow (A \oplus A', B \oplus B',\beta)\)  
		and  
		\((i_{2}^{A}, i_{2}^{B}): (A',B', \alpha') \rightarrow (A \oplus A', B \oplus B',\beta)\)  
		are morphisms in \((F/G)\).
		
		We will show that $((A \oplus A', B \oplus B', \beta), (i_{1}^{A}, i_{1}^{B}), (i_{2}^{A}, i_{2}^{B}))$ is the coproduct of $(A, B, \alpha)$ and $(A', B', \alpha')$. Indeed, let $(X, Y, H)$ be an object of $(F/G)$ and let $(f_{1}, g_{1}): (A, B, \alpha) \rightarrow (X, Y, H)$ and $(f_{2}, g_{2}): (A', B', \alpha') \rightarrow (X, Y, H)$ be arbitrary morphisms in $(F/G)$. We want to find a unique morphism in \((F/G)\) given by $(\varphi, \psi): (A \oplus A', B \oplus B', \beta) \rightarrow (X, Y, H)$ such that the diagram below commutes:
		\begin{equation*}
			\begin{tikzcd}
				&  & {(X,Y,H)}  &  &  \\
				{(A,B,\alpha)} \arrow[rru, "{(f_{1},g_{1})}"] \arrow[rr, "{(i_{1}^{A},i_{1}^{B})}"'] &  & {(A \oplus A, B \oplus B', \beta)} \arrow[u, "{(\varphi,\psi)}"', dashed] &  & {(A',B',\alpha')} \arrow[llu, "{(f_{2},g_{2})}"'] \arrow[ll, "{(i_{2}^{A},i_{2}^{B})}"]
			\end{tikzcd}
		\end{equation*}
		First, we will show the existence of the morphism $(\varphi, \psi)$. From the fact that $(f_{1}, g_{1})$ and $(f_{2}, g_{2})$ are morphisms, and using the universal property of the coproducts of $A$, $A'$ and $B$, $B'$, there exist unique morphisms $\varphi: A \oplus A' \rightarrow X$ and $\psi: B \oplus B' \rightarrow Y$ such that the diagrams below commute.
		\begin{equation*}
			\begin{tikzcd}
				& X  &  &  & Y  &    \\
				A \arrow[ru, "f_{1}"] \arrow[r, "i_{1}^{A}"'] & A \oplus A'  \arrow[u, "\varphi"', dashed] & A' \arrow[lu, "f_{2}"'] \arrow[l, "i_{2}^{A}"] & B \arrow[ru, "g_{1}"] \arrow[r, "i_{1}^{B}"'] & B \oplus B' \arrow[u, "\psi"', dashed] & B' \arrow[lu, "g_{2}"'] \arrow[l, "i_{2}^{B}"]
			\end{tikzcd}
		\end{equation*}
		that is,
		\begin{equation}
			f_{1} = \varphi \circ i_{1}^{A} \ \mbox{ and } \ f_{2} = \varphi \circ i_{2}^{A} .
			\label{eq_varphi_comma}
		\end{equation}
		\begin{equation}
			g_{1} = \psi \circ i_{1}^{B} \ \mbox{ and } \ g_{2} = \psi \circ i_{2}^{B} .
			\label{eq_psi_comma}
		\end{equation}
		This implies that
		\begin{equation}
			F(f_{1}) = F(\varphi) \circ F(i_{1}^{A}) \ \mbox{ and } \ F(f_{2}) = F(\varphi) \circ F(i_{2}^{A}) .
			\label{eq_Fvarphi_comma}
		\end{equation}
		\begin{equation}
			G(g_{1}) = G(\psi) \circ G(i_{1}^{B}) \ \mbox{ and } \ G(g_{2}) = G(\psi) \circ G(i_{2}^{B}) .
			\label{eq_Gpsi_comma}
		\end{equation}
		By the universal property of the coproduct $(F(A \oplus A'), F(i_{1}^{A}), F(i_{2}^{A}))$ of \(F(A)\) and \(F(A')\), there exist unique morphisms $\rho, \rho': F(A \oplus A') \rightarrow G(Y)$ such that the diagrams below commute.
		\begin{equation*}
			\begin{tikzcd}
				& G(Y) & & & G(Y) &\\
				F(A) \arrow[ru, "H \circ F(f_{1})"] \arrow[r, "F(i_{1}^{A})"'] & F(A \oplus A') \arrow[u, "\rho"', dashed] & F(A') \arrow[lu, "H \circ F(f_{2})"'] \arrow[l, "F(i_{2}^{A})"] & F(A) \arrow[ru, "G(g_{1}) \circ \alpha"] \arrow[r, "F(i_{1}^{A})"'] & F(A \oplus A') \arrow[u, "\rho'", dashed] & F(A') \arrow[lu, "G(g_{2}) \circ \alpha'"'] \arrow[l, "F(i_{2}^{A})"]
			\end{tikzcd}
		\end{equation*}
		i.e.,
		\begin{equation}
			H \circ F(f_{1}) = \rho \circ F(i_{1}^{A}) \ \mbox{ and } \  H \circ F(f_{2}) = \rho \circ F(i_{2}^{A}) .
			\label{phi_HF_comma}
		\end{equation}
		\begin{equation}
			G(g_{1}) \circ \alpha  = \rho' \circ F(i_{1}^{A}) \ \mbox{ and } \   G(g_{2}) \circ \alpha' = \rho' \circ F(i_{2}^{A}) .
			\label{phi'_G_comma}
		\end{equation}
		By \cref{eq_Fvarphi_comma}
		\begin{equation}
			H \circ F(f_{1}) =
			H \circ ( F(\varphi) \circ F(i_{1}^{A})) = (H \circ F(\varphi)) \circ F(i_{1}^{A}) .
			\label{unic_phi1}
		\end{equation}
		\begin{equation}
			H \circ F(f_{2}) =
			H \circ ( F(\varphi) \circ F(i_{2}^{A})) = (H \circ F(\varphi)) \circ F(i_{2}^{A}) .
			\label{unic_phi2}
		\end{equation}
		From the \cref{phi_HF_comma}, \cref{unic_phi1}, \cref{unic_phi2}, and by the uniqueness of $\rho$, we obtain
		\begin{equation}
			\rho = H \circ F(\varphi) .
			\label{phiF}
		\end{equation}
		Similarly, by \cref{eq_Gpsi_comma} and \cref{eq_beta_cop_comma}
		\begin{equation}
			G(g_{1}) \circ \alpha =
			(G(\psi) \circ G(i_{1}^{B})) \circ \alpha 
			=
			G(\psi) \circ (\beta \circ F(i_{1}^{A})) = (G(\psi) \circ \beta) \circ  F(i_{1}^{A}) .
			\label{unic_phi'1}
		\end{equation}
		\begin{equation}
			G(g_{2}) \circ \alpha' =
			(G(\psi) \circ G(i_{2}^{B})) \circ \alpha'  =
			G(\psi) \circ (\beta \circ F(i_{2}^{A})) = (G(\psi) \circ \beta) \circ  F(i_{2}^{A}) .
			\label{unic_phi'2}
		\end{equation}
		From the \cref{phi'_G_comma}, \cref{unic_phi'1}, \cref{unic_phi'2}, and by the uniqueness of $\rho'$, it follows that
		\begin{equation}
			\rho' = G(\psi) \circ \beta.
			\label{phi'G}
		\end{equation}
		From the fact that $(f_{1}, g_{1})$ and $(f_{2}, g_{2})$ are morphisms in $(F/G)$, we have
		\begin{equation}
			H \circ F(f_{1}) = G(g_{1}) \circ \alpha.
			\label{f1f2_morf}
		\end{equation}
		\begin{equation}
			H \circ F(f_{2}) = G(g_{2}) \circ \alpha'.
			\label{g1g2_morf}
		\end{equation}
		From the  \cref{f1f2_morf}, \cref{g1g2_morf}, \cref{unic_phi'1}, \cref{unic_phi'2} and \cref{phi'G}, we obtain that $H \circ F(f_{1}) = \rho' \circ F(i_{1}^{A})$ and $H \circ F(f_{2}) = \rho' \circ F(i_{2}^{A})$. By the uniqueness of $\rho$, we have $\rho = \rho'$. Thus, from the \cref{phiF} and \cref{phi'G}, it follows that $H \circ F(\varphi) = G(\psi) \circ \beta$, that is, the diagram below commutes.
		\begin{equation*}
			\begin{tikzcd}
				F(A \oplus A') \arrow[r, "F(\varphi)"] \arrow[d, "\beta"'] & F(X) \arrow[d, "H"] \\ G(B \oplus B') \arrow[r, "G(\psi)"'] & G(Y)
			\end{tikzcd}
		\end{equation*}
		Thus, $(\varphi, \psi)$ is a morphism in $(F/G)$. Moreover, by \cref{eq_varphi_comma} and \cref{eq_psi_comma} we have
		\[
		(\varphi,\psi) \circ (i_{1}^{A},i_{1}^{B}) = (\varphi \circ i_{1}^{A}, \psi \circ i_{1}^{B}) = (f_{1}, g_{1}) .
		\] 
		\[
		(\varphi,\psi) \circ (i_{2}^{A},i_{2}^{B}) = (\varphi \circ i_{2}^{A}, \psi \circ i_{2}^{B}) = (f_{2}, g_{2}).
		\]
		Finally, suppose $(\Tilde{\varphi}, \Tilde{\psi}): (A \oplus A', B \oplus B', \beta) \rightarrow (X, Y, H)$ is a morphism in $(F/G)$ such that 
		\[(\Tilde{\varphi}, \Tilde{\psi}) \circ (i_{1}^{A},i_{1}^{B}) =(f_{1}, g_{1}) \ \mbox{ and } \ (\Tilde{\varphi}, \Tilde{\psi}) \circ (i_{2}^{A},i_{2}^{B}) = (f_{2}, g_{2}). \]
		Then,
		\[\Tilde{\varphi} \circ i_{1}^{A} = f_{1} , \Tilde{\varphi} \circ i_{2}^{A} = f_{2} \ \mbox{ and } \  \Tilde{\psi} \circ i_{1}^{B} = g_{1} ,  \Tilde{\psi} \circ i_{2}^{B} = g_{2}. \]
		From the uniqueness of \(\varphi\) and \(\psi\), we have \(\Tilde{\varphi} = \varphi\) and \(\Tilde{\psi} = \psi\). This way, \((F/G)\) has finite coproducts. Therefore, \((F/G)\) is an additive category.
	\end{proof}
	
	\begin{proposition}
		Let $\mathcal{A}$, $\mathcal{B}$, and $\mathcal{C}$ be abelian categories, $F: \mathcal{A} \rightarrow \mathcal{C}$ an additive functor, and $G: \mathcal{B} \rightarrow \mathcal{C}$ a left exact functor. Then, $(F/G)$ admits kernels.
		\label{prop_comma_kernel}
	\end{proposition}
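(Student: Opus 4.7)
Given a morphism $(f,g)\colon (A,B,\alpha)\to(A',B',\alpha')$ in $(F/G)$, my plan is to build its kernel componentwise: let $k_A\colon K_A\to A$ be the kernel of $f$ in $\mathcal{A}$ and $k_B\colon K_B\to B$ be the kernel of $g$ in $\mathcal{B}$, and then produce the missing structure morphism $\gamma\colon F(K_A)\to G(K_B)$ so that $(K_A,K_B,\gamma)$ together with $(k_A,k_B)$ becomes the desired kernel in $(F/G)$.

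The key step is the construction of $\gamma$, and this is precisely where the left exactness of $G$ enters. Since $G$ is left exact, $G(k_B)\colon G(K_B)\to G(B)$ is a kernel of $G(g)$ in $\mathcal{C}$. I claim that the composite $\alpha\circ F(k_A)\colon F(K_A)\to G(B)$ is annihilated by $G(g)$: indeed, using that $(f,g)$ is a morphism in $(F/G)$ and that $F$ is additive (hence sends the zero morphism $f\circ k_A=0$ to zero),
\begin{equation*}
G(g)\circ\alpha\circ F(k_A) \;=\; \alpha'\circ F(f)\circ F(k_A) \;=\; \alpha'\circ F(f\circ k_A) \;=\; 0.
\end{equation*}
By the universal property of the kernel $G(k_B)$, there is a unique $\gamma\colon F(K_A)\to G(K_B)$ with $G(k_B)\circ\gamma=\alpha\circ F(k_A)$, which is exactly the condition that $(k_A,k_B)$ be a morphism $(K_A,K_B,\gamma)\to(A,B,\alpha)$ in $(F/G)$. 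By construction $(f,g)\circ(k_A,k_B)=(f\circ k_A,g\circ k_B)=(0,0)$.

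For the universal property, I would take an arbitrary morphism $(h_A,h_B)\colon(T,S,\tau)\to(A,B,\alpha)$ in $(F/G)$ with $(f,g)\circ(h_A,h_B)=0$. Then $f\circ h_A=0$ and $g\circ h_B=0$, so by the kernel universal properties in $\mathcal{A}$ and $\mathcal{B}$ there are unique $\widetilde{h}_A\colon T\to K_A$ and $\widetilde{h}_B\colon S\to K_B$ with $k_A\circ\widetilde{h}_A=h_A$ and $k_B\circ\widetilde{h}_B=h_B$. The step that still needs verification — and which I expect to be the main technical point — is that the pair $(\widetilde{h}_A,\widetilde{h}_B)$ is itself a morphism in $(F/G)$, i.e.\ that $\gamma\circ F(\widetilde{h}_A)=G(\widetilde{h}_B)\circ\tau$. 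For this I would compose with the monomorphism $G(k_B)$ and compute: on one side $G(k_B)\circ\gamma\circ F(\widetilde{h}_A)=\alpha\circ F(k_A)\circ F(\widetilde{h}_A)=\alpha\circ F(h_A)$, and on the other side $G(k_B)\circ G(\widetilde{h}_B)\circ\tau=G(h_B)\circ\tau$; these agree because $(h_A,h_B)$ is a morphism in $(F/G)$. Uniqueness of $(\widetilde{h}_A,\widetilde{h}_B)$ is immediate from that of $\widetilde{h}_A$ and $\widetilde{h}_B$, completing the verification that $(K_A,K_B,\gamma)$ with $(k_A,k_B)$ is a kernel of $(f,g)$.
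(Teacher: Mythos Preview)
Your proof is correct and follows essentially the same approach as the paper: build the kernel componentwise, use left exactness of $G$ to identify $G(k_B)$ with a kernel of $G(g)$ and thereby produce the structure map $\gamma$, then verify the universal property by factoring through the componentwise kernels. Your verification that $(\widetilde{h}_A,\widetilde{h}_B)$ is a morphism in $(F/G)$ is slightly more streamlined than the paper's (you compose with the monomorphism $G(k_B)$ and cancel, whereas the paper introduces an auxiliary morphism $\Phi$ via the kernel universal property and shows both sides equal it), but this is the same idea expressed differently.
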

	\begin{proof}
		Let $(f, g): (A, B, \alpha) \rightarrow (A', B', \alpha')$ be a morphism in $(F/G)$. Since $f: A \rightarrow A' \in \operatorname{Mor}(\mathcal{A})$ and $g: B \rightarrow B' \in \operatorname{Mor}(\mathcal{B})$, there exist $(\operatorname{Ker} f, \operatorname{ker} f)$ and $(\operatorname{Ker} g, \operatorname{ker} g)$. Since $(f, g)$ is a morphism in $(F/G)$, we have
		\begin{equation}
			\alpha' \circ F(f) = G(g) \circ \alpha .
			\label{fg_morf_comma}
		\end{equation}
		Hence, by \cref{fg_morf_comma}
		\begin{equation*}
			G(g) \circ \alpha \circ  F(\operatorname{ker}f) =
			\alpha' \circ F(f) \circ F(\operatorname{ker}f) = \alpha' \circ F(f \circ \operatorname{ker}f) = \alpha' \circ F(0) = 0 .
		\end{equation*}
		From the left exactness of $G$ we have \(\operatorname{Ker}G(g) = G(\operatorname{Ker}g)\) and \(\operatorname{ker}G(g) = G(\operatorname{ker}g)\). Moreover, using the universal property of the kernel of $G(g)$, there exists a unique morphism $\beta: F(\operatorname{Ker} f) \rightarrow G(\operatorname{Ker} g)$ such that the diagram below commutes.
		\begin{equation*}
			\begin{tikzcd}
				G(\operatorname{Ker}g) \arrow[r, "G(\operatorname{ker}g)"] & G(B) \arrow[r, "G(g)"] & G(B') \\ & F(\operatorname{Ker}f) \arrow[u, "\alpha \circ F(\operatorname{ker}f)"'] \arrow[lu, "\beta", dashed] &  
			\end{tikzcd}
		\end{equation*}
		i.e.,
		\begin{equation}
			\alpha \circ F(\operatorname{ker}f) = G(\operatorname{ker}g) \circ \beta .
			\label{beta_G_exac_esq}
		\end{equation}
		From \cref{beta_G_exac_esq}, we have that the diagram below commutes.
		\begin{equation*}
			\begin{tikzcd}
				F(\operatorname{Ker}f) \arrow[r, "F(\operatorname{ker}f)"] \arrow[d, "\beta"'] & F(A) \arrow[d, "\alpha"] \\ G(\operatorname{Ker}g) \arrow[r, "G(\operatorname{ker}g)"']                    & G(B)    
			\end{tikzcd}
		\end{equation*}
		Then $(\operatorname{ker} f, \operatorname{ker} g)$ is a morphism in $(F/G)$.
		We will show that the kernel of the morphism \((f, g)\) is given by \(((\operatorname{Ker} f, \operatorname{Ker} g, \beta), (\operatorname{ker} f, \operatorname{ker} g))\). Note that
		$$(f,g) \circ (\operatorname{ker}f, \operatorname{ker}g) = (f \circ \operatorname{ker}f, g \circ \operatorname{ker}g) = (0,0) = 0.$$
		Thus, it remains to show the universal property of the kernel. For this, consider $(h, k): (X, Y, H) \rightarrow (A, B, \alpha) \in \operatorname{Mor}((F/G))$ such that $(f, g) \circ (h, k) = (0,0)$. Since $(h, k)$ is a morphism in $(F/G)$, we have
		\begin{equation}
			\alpha \circ F(h) = G(k) \circ H .
			\label{hk_morfi}
		\end{equation}
		Since $f \circ h = 0$ and $g \circ k = 0$, by the universal property of the kernels of $f$ and $g$, there exist unique morphisms $\varphi: X \rightarrow \operatorname{Ker} f$ and $\psi: Y \rightarrow \operatorname{Ker} g$ such that
		\begin{equation}
			h = \operatorname{ker}f \circ \varphi \ \mbox{ and } \ k = \operatorname{ker}g \circ \psi .
			\label{hk}
		\end{equation}
		By \cref{hk} we have
		\begin{equation}
			F(h) = F(\operatorname{ker}f) \circ F(\varphi) \ \mbox{ and } \ G(k) = G(\operatorname{ker}g) \circ G(\psi).
			\label{FhGk}
		\end{equation}
		Furthermore, by \cref{beta_G_exac_esq}
		\[
		G(g) \circ (\alpha \circ  F(\operatorname{ker}f)) \circ F(\varphi) =
		G(g) \circ (G(\operatorname{ker}g) \circ \beta) \circ F(\varphi)  = 0 \circ \beta \circ F(\varphi) = 0 .
		\]
		Thus, by \cref{FhGk} and the universal property of the kernel of $G(g)$, there exists a unique $\Phi: F(X) \rightarrow G(\operatorname{Ker} g)$ such that
		\begin{equation}
			\alpha \circ  F(h) = G(\operatorname{ker}g) \circ \Phi.
			\label{alphPhiFG}
		\end{equation}
		Thus, by \cref{beta_G_exac_esq}, \cref{FhGk} and \cref{alphPhiFG}
		\begin{equation}
			G(\operatorname{ker}g) \circ \beta \circ  F(\varphi) =
			\alpha \circ F(\operatorname{ker}f) \circ F(\varphi) =
			\alpha \circ F(h) =
			G(\operatorname{ker}g) \circ \Phi .
			\label{GalpF}
		\end{equation}
		By the left exactness of $G$, we can rewrite equation \cref{GalpF} as
		\begin{equation}
			\operatorname{ker}G(g) \circ \beta \circ  F(\varphi) =  \operatorname{ker}G(g) \circ \Phi .
		\end{equation}
		Since \(\operatorname{ker}G(g)\) is a monomorphism we conclude that
		\begin{equation}
			\Phi = \beta \circ  F(\varphi).
			\label{Phi_beta}
		\end{equation}
		On the other hand, from \cref{FhGk}, \cref{hk_morfi} and \cref{alphPhiFG}
		\begin{equation}
			G(\operatorname{ker}g) \circ G(\psi) \circ H =
			G(k) \circ H =
			\alpha \circ F(h) =
			G(\operatorname{ker}g) \circ \Phi .
			\label{GGH}
		\end{equation}
		Again, using the left exactness of \(G\) and the fact that \(G(\operatorname{ker}g)\) is a monomorphism, we obtain
		\begin{equation}
			\Phi = G(\psi) \circ  H.
			\label{Phi_H}
		\end{equation}
		From \cref{Phi_beta} and \cref{Phi_H}, we obtain
		\begin{equation}
			\beta \circ  F(\varphi) =  G(\psi) \circ  H,
		\end{equation}
		Thus, the diagram below commutes,
		\begin{equation*}
			\begin{tikzcd}[sep=large]
				F(X) \arrow[r, "F(\varphi)"] \arrow[d, "H"'] & F(\operatorname{Ker}f) \arrow[d, "\beta"] \\ G(Y) \arrow[r, "G(\psi)"']                   & G(\operatorname{Ker}g)                   
			\end{tikzcd}
		\end{equation*}
		and we obtain that $(\varphi, \psi): (X, Y, H) \rightarrow (\operatorname{Ker} f, \operatorname{Ker} g, \beta)$ is a morphism in $(F/G)$. Moreover, by \cref{hk}
		\begin{equation}
			(\operatorname{ker}f, \operatorname{ker}g) \circ (\varphi, \psi) = (\operatorname{ker}f \circ \varphi, \operatorname{ker}g \circ \psi) = (h,k) .
			\label{var_psi_prop_ker}
		\end{equation}
		To conclude, we will show that the morphism $(\varphi, \psi)$ is unique with the property of \cref{var_psi_prop_ker}. Indeed, suppose $(\eta, \rho): (X, Y, H) \rightarrow (\operatorname{Ker} f, \operatorname{Ker} g, \beta)$ is a morphism in $(F/G)$ such that $(\operatorname{ker} f, \operatorname{ker} g) \circ (\eta, \rho) = (h, k)$. Thus,
		$$(h,k) =  (\operatorname{ker}f, \operatorname{ker}g) \circ (\eta, \rho) =  (\operatorname{ker}f \circ \eta, \operatorname{ker}g \circ \rho),$$
		that is, $\operatorname{ker}f \circ \eta = h$ and $\operatorname{ker}g \circ \rho = k$. From the uniqueness of $\varphi$ and $\psi$, we obtain $\eta = \varphi$ and $\rho = \psi$. Therefore, \((F/G)\) admits kernels.
	\end{proof}
	
	\begin{proposition}
		Let $\mathcal{A}$, $\mathcal{B}$, and $\mathcal{C}$ be abelian categories, $F: \mathcal{A} \rightarrow \mathcal{C}$ a right exact functor, and $G: \mathcal{B} \rightarrow \mathcal{C}$ an additive functor. Then, $(F/G)$ admits cokernels.
		\label{prop_comma_cokernel}
	\end{proposition}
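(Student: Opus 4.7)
The plan is to mirror the proof of \cref{prop_comma_kernel}, replacing the use of left exactness of $G$ by right exactness of $F$. Given a morphism $(f,g):(A,B,\alpha)\to(A',B',\alpha')$ in $(F/G)$, I would first form $(\operatorname{Coker} f,\operatorname{coker} f)$ in $\mathcal{A}$ and $(\operatorname{Coker} g,\operatorname{coker} g)$ in $\mathcal{B}$. Because $F$ is right exact, $(F(\operatorname{Coker} f),F(\operatorname{coker} f))$ is a cokernel of $F(f)$ in $\mathcal{C}$; in particular $F(\operatorname{coker} f)$ is an epimorphism. The target object of the cokernel in $(F/G)$ should be $(\operatorname{Coker} f,\operatorname{Coker} g,\gamma)$ for a suitable $\gamma:F(\operatorname{Coker} f)\to G(\operatorname{Coker} g)$.

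To construct $\gamma$, I would look at the composite $G(\operatorname{coker} g)\circ\alpha':F(A')\to G(\operatorname{Coker} g)$ and compute, using the morphism equation $\alpha'\circ F(f)=G(g)\circ\alpha$ together with the additivity of $G$ and $\operatorname{coker} g\circ g=0$, that
\[
G(\operatorname{coker} g)\circ\alpha'\circ F(f)=G(\operatorname{coker} g\circ g)\circ\alpha=0.
\]
The universal property of the cokernel $F(\operatorname{coker} f)=\operatorname{coker} F(f)$ then produces a unique $\gamma$ with $\gamma\circ F(\operatorname{coker} f)=G(\operatorname{coker} g)\circ\alpha'$. This identity is exactly the statement that $(\operatorname{coker} f,\operatorname{coker} g):(A',B',\alpha')\to(\operatorname{Coker} f,\operatorname{Coker} g,\gamma)$ is a morphism in $(F/G)$, and composing with $(f,g)$ gives $(0,0)$ componentwise.

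For the universal property, let $(h,k):(A',B',\alpha')\to(X,Y,H)$ satisfy $(h,k)\circ(f,g)=(0,0)$, so $h\circ f=0$ and $k\circ g=0$. The cokernels in $\mathcal{A}$ and $\mathcal{B}$ give unique $\varphi:\operatorname{Coker} f\to X$ and $\psi:\operatorname{Coker} g\to Y$ with $h=\varphi\circ\operatorname{coker} f$ and $k=\psi\circ\operatorname{coker} g$. The remaining, and decisive, task is to show $(\varphi,\psi)$ is a morphism in $(F/G)$, that is,
\[
H\circ F(\varphi)=G(\psi)\circ\gamma.
\]
I would establish this by precomposing both sides with the epimorphism $F(\operatorname{coker} f)$ and computing: the left-hand side becomes $H\circ F(h)=G(k)\circ\alpha'$ (using functoriality and that $(h,k)$ is a morphism), while the right-hand side becomes $G(\psi)\circ\gamma\circ F(\operatorname{coker} f)=G(\psi)\circ G(\operatorname{coker} g)\circ\alpha'=G(k)\circ\alpha'$ by the defining property of $\gamma$. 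Since $F(\operatorname{coker} f)$ is epi (this is the one nontrivial input from right exactness of $F$), the desired equality follows.

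The main obstacle, as in the kernel case but dualised, is this last verification: without right exactness of $F$ one could still define $\gamma$ only after $F(\operatorname{coker} f)$ is known to be the cokernel of $F(f)$, and one could not cancel $F(\operatorname{coker} f)$ to conclude $(\varphi,\psi)$ is a morphism. Uniqueness of $(\varphi,\psi)$ follows immediately componentwise from uniqueness of $\varphi$ and $\psi$. This will exhibit $((\operatorname{Coker} f,\operatorname{Coker} g,\gamma),(\operatorname{coker} f,\operatorname{coker} g))$ as a cokernel of $(f,g)$ in $(F/G)$, completing the proof.
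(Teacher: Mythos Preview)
Your proposal is correct and follows essentially the same approach as the paper, which simply states that the argument is dual to \cref{prop_comma_kernel} and records the resulting cokernel $((\operatorname{Coker} f,\operatorname{Coker} g,\gamma),(\operatorname{coker} f,\operatorname{coker} g))$ with $\gamma$ defined by the same diagram you describe. Your final verification that $(\varphi,\psi)$ is a morphism by cancelling the epimorphism $F(\operatorname{coker} f)$ is a slight streamlining of the paper's style in the kernel case (where an auxiliary map $\Phi$ is introduced via a universal property), but the content is identical.
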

	\begin{proof}
		The argument parallels that of \cref{prop_comma_kernel}, by swapping the role of the kernel with the cokernel. For any morphism $(f,g): (A,B,\alpha) \rightarrow (A',B',\alpha')$ in $(F/G)$, we have that the cokernel of $(f,g)$ is given by 
		\[\left(\operatorname{Coker}(f,g), \operatorname{coker}(f,g)\right) = \left((\operatorname{Coker} f, \operatorname{Coker} g, \gamma), (\operatorname{coker} f, \operatorname{coker} g)\right),\]
		where $\gamma: F(\operatorname{Coker} f) \rightarrow G(\operatorname{Coker} g)$ is the unique morphism making the diagram below commute.
		\begin{equation*}
			\begin{tikzcd}[sep=large]
				F(A) \arrow[r, "F(f)"] & F(A') \arrow[r, "F(\operatorname{coker}f)"] \arrow[d, "G(\operatorname{coker}g) \circ \alpha'"'] & F(\operatorname{Coker}f) \arrow[ld, "\gamma", dashed] \\ & G(\operatorname{Coker}g)   &           \end{tikzcd}
		\end{equation*}
	\end{proof}

	\begin{theorem}
		Let $\mathcal{A}$, $\mathcal{B}$, and $\mathcal{C}$ be abelian categories, $F: \mathcal{A} \rightarrow \mathcal{C}$ a right exact functor, and $G: \mathcal{B} \rightarrow \mathcal{C}$ a left exact functor. Then, $(F/G)$ is an abelian category.
		\label{teo_comma_abeliana}
	\end{theorem}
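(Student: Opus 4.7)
The plan is to combine the three preceding propositions and then verify the remaining axiom that distinguishes abelian categories among additive ones with kernels and cokernels: every monomorphism must be the kernel of its cokernel, and every epimorphism the cokernel of its kernel. Since right exact and left exact functors are in particular additive, \cref{prop_comma_aditiva} applies and shows $(F/G)$ is additive, while \cref{prop_comma_kernel} and \cref{prop_comma_cokernel} supply all kernels and cokernels together with their explicit descriptions.

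First I would characterize monomorphisms and epimorphisms. Reading off \cref{prop_comma_kernel}, the kernel of a morphism $(f,g)$ vanishes precisely when $\operatorname{Ker}f=0$ and $\operatorname{Ker}g=0$, so $(f,g)$ is monic if and only if both $f$ and $g$ are monic in $\mathcal{A}$ and $\mathcal{B}$; the dual reading of \cref{prop_comma_cokernel} shows $(f,g)$ is epic if and only if both $f$ and $g$ are epic.

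Next, take a monomorphism $(f,g): (A,B,\alpha) \to (A',B',\alpha')$. Form its cokernel $(\operatorname{coker}f, \operatorname{coker}g): (A',B',\alpha') \to (\operatorname{Coker}f, \operatorname{Coker}g, \gamma)$ via \cref{prop_comma_cokernel} and then take the kernel of this morphism via \cref{prop_comma_kernel}, yielding a subobject of $(A',B',\alpha')$ with underlying data $(\operatorname{Ker}(\operatorname{coker}f), \operatorname{Ker}(\operatorname{coker}g), \delta)$ for some middle morphism $\delta$. Abelianness of $\mathcal{A}$ and $\mathcal{B}$ gives canonical isomorphisms $A \cong \operatorname{Ker}(\operatorname{coker}f)$ and $B \cong \operatorname{Ker}(\operatorname{coker}g)$ under which $f$ and $g$ are identified with the respective kernel maps, and it remains to show that $\delta$ matches $\alpha$ under these identifications. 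By the construction in \cref{prop_comma_kernel}, $\delta$ is the unique morphism satisfying $G(\operatorname{ker}(\operatorname{coker}g)) \circ \delta = \alpha' \circ F(\operatorname{ker}(\operatorname{coker}f))$, which after identification reads $G(g) \circ \delta = \alpha' \circ F(f) = G(g) \circ \alpha$. Left exactness of $G$ guarantees that $G(g)$ is monic, and cancelling it forces $\delta = \alpha$. Hence $(f,g)$ is the kernel of its cokernel, and the dual argument — where right exactness of $F$ makes $F(f)$ epic so that it can be cancelled on the right — shows every epimorphism is the cokernel of its kernel.

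The main obstacle is precisely this compatibility check for the middle datum $\delta$ (and its dual on the epimorphism side): the componentwise identifications in $\mathcal{A}$ and $\mathcal{B}$ are immediate from their abelianness, but matching $\delta$ with $\alpha$ is exactly where one needs $G$ to preserve monomorphisms and $F$ to preserve epimorphisms. This is why the exactness hypotheses on $F$ and $G$, rather than mere additivity, enter in the statement.
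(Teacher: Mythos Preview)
Your argument is correct and uses a genuinely different (though standardly equivalent) formulation of the abelian axiom than the paper. The paper verifies that for every morphism $(f,g)$ the canonical map $\operatorname{Coim}(f,g)\to\operatorname{Im}(f,g)$ is an isomorphism: it computes $\operatorname{Im}(f,g)=(\operatorname{Im}f,\operatorname{Im}g,\Delta)$ and $\operatorname{Coim}(f,g)=(\operatorname{Coim}f,\operatorname{Coim}g,\Gamma)$, checks that the pair $(\overline{f},\overline{g})$ of induced maps in $\mathcal{A}$ and $\mathcal{B}$ really is a morphism in $(F/G)$, and then inverts it componentwise. Your route instead checks that every mono is the kernel of its cokernel (and dually), reducing the middle-datum compatibility to cancelling the monomorphism $G(g)$ (resp.\ the epimorphism $F(f)$). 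Both arguments consume the exactness hypotheses at exactly the same point---the paper cancels $G(\tilde{\beta})$ and $F(\theta)$ where you cancel $G(g)$ and $F(f)$---but yours is a bit shorter because it bypasses the explicit image/coimage computation, while the paper's version has the side benefit of recording $\operatorname{Im}(f,g)$ and $\operatorname{Coim}(f,g)$ explicitly for later use.
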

	\begin{proof}
		By \cref{prop_comma_aditiva}, \cref{prop_comma_kernel}, and \cref{prop_comma_cokernel}, it remains to show that, for every morphism in \((F/G)\), its induced morphism is an isomorphism.
		Let $(f,g): (A,B,\alpha) \rightarrow (A',B',\alpha')$ be any morphism in $(F/G)$. By the basic property of categories with kernels and cokernels, there exist unique morphisms \(\varphi: A \rightarrow \operatorname{Im}f\), \( \overline{f}: \operatorname{Coim}f \rightarrow \operatorname{Im}f\) in \(\mathcal{A}\) and \(\psi: B \rightarrow \operatorname{Im}g\), \( \overline{g}: \operatorname{Coim}g \rightarrow \operatorname{Im}g\) in \(\mathcal{B}\) such that
		\begin{equation}
			\label{eq_morf_ind_cat_A}
			f = \beta \circ \varphi \ \mbox{ and } \ \varphi = \overline{f} \circ \theta
		\end{equation}
		\begin{equation}
			\label{eq_morf_ind_cat_B}
			g = \Tilde{\beta} \circ \psi \ \mbox{ and } \ \psi = \overline{g} \circ \Tilde{\theta}
		\end{equation}
		where \(\theta = \operatorname{coker}(\operatorname{ker}f)\), \(\beta = \operatorname{ker}(\operatorname{coker}f)\), \(\Tilde{\theta} = \operatorname{coker}(\operatorname{ker}g)\) and \(\Tilde{\beta} = \operatorname{ker}(\operatorname{coker}g)\).
		Note that
		\begin{eqnarray*}
			\operatorname{Im}(f,g) = \operatorname{Ker}(\operatorname{coker}(f,g)) 
			&=&
			\operatorname{Ker}(\operatorname{coker}f,\operatorname{coker}g) \\
			&=&
			(\operatorname{Ker}(\operatorname{coker}f),\operatorname{Ker}(\operatorname{coker}g), \Delta) \\
			&=&
			(\operatorname{Im}f, \operatorname{Im}g, \Delta), 
		\end{eqnarray*}
		where \(\Delta: F(\operatorname{Im}f) \rightarrow G(\operatorname{Im}g)\) arises from the universal property of the kernel of \(G(\operatorname{coker}g)\).
		\begin{eqnarray*}
			\operatorname{Coim}(f,g) = \operatorname{Coker}(\operatorname{ker}(f,g)) 
			&=&\operatorname{Coker}(\operatorname{ker}f,\operatorname{ker}g) \\
			&=&
			(\operatorname{Coker}(\operatorname{ker}f),\operatorname{Coker}(\operatorname{ker}g), \Gamma) \\
			&=&
			(\operatorname{Coim}f, \operatorname{Coim}g, \Gamma),
		\end{eqnarray*}
		where \(\Gamma: F(\operatorname{Coim}f) \rightarrow G(\operatorname{Coim}g)\) arises from the universal property of the cokernel of \(F(\operatorname{ker}f)\). First we will prove that \((\varphi, \psi): (A,B,\alpha) \rightarrow (\operatorname{Im}f, \operatorname{Im}g, \Delta)\) is a morphism in \((F/G)\). Note that
		\[
		\operatorname{coker}(\operatorname{ker}(f,g)) = \operatorname{coker}(\operatorname{ker}f, \operatorname{ker}g) = (\operatorname{coker}(\operatorname{ker}f), \operatorname{coker}(\operatorname{ker}g)) = (\theta, \Tilde{\theta})
		\]
		\[
		\operatorname{ker}(\operatorname{coker}(f,g)) = \operatorname{ker}(\operatorname{coker}f, \operatorname{coker}g) = (\operatorname{ker}(\operatorname{coker}f), \operatorname{ker}(\operatorname{coker}g)) = (\beta, \Tilde{\beta})
		\]
		are morphisms in \((F/G)\). Then the following diagram commutes:
		\begin{equation*}
			\begin{tikzcd}
				F(A) \arrow[r, "F(\theta)"] \arrow[d, "\alpha"'] & F(\operatorname{Coim}f) \arrow[d, "\Gamma"] &  & F(\operatorname{Im}f) \arrow[r, "F(\beta)"] \arrow[d, "\Delta"'] & F(A') \arrow[d, "\alpha'"] \\
				G(B) \arrow[r, "G(\Tilde{\theta})"']             & G(\operatorname{Coim}g)                     &  & G(\operatorname{Im}g) \arrow[r, "G(\Tilde{\beta})"']            & G(B')                     
			\end{tikzcd}
		\end{equation*}
		From the diagram e using that \((f,g)\) is a morphism in \((F/G)\) we have
		\begin{equation}
			\label{eq_morf_fg_comma}
			\alpha' \circ F(f) = G(g) \circ \alpha.
		\end{equation}
		\begin{equation}
			\label{eq_morf_coim_comma}
			\Gamma \circ F(\theta) = G(\Tilde{\theta}) \circ \alpha.
		\end{equation}
		\begin{equation}
			\label{eq_morf_im_comma}
			\alpha' \circ F(\beta) = G(\Tilde{\beta}) \circ \Delta.
		\end{equation}
		From \cref{eq_morf_ind_cat_A} and \cref{eq_morf_coim_comma}, we have
		\[
		\alpha' \circ F(f) = \alpha' \circ F(\beta \circ \varphi) =  \alpha' \circ F(\beta) \circ F(\varphi) = G(\Tilde{\beta}) \circ \Delta \circ F(\varphi).
		\]
		By \cref{eq_morf_ind_cat_B}:
		\[
		G(g) \circ \alpha = G(\Tilde{\beta} \circ \psi) \circ \alpha = G(\Tilde{\beta}) \circ G(\psi) \circ \alpha.
		\]
		From \cref{eq_morf_fg_comma}, it follows that
		\[
		G(\Tilde{\beta}) \circ \Delta \circ F(\varphi)= G(\Tilde{\beta}) \circ G(\psi) \circ \alpha.
		\]
		Since \(\tilde{\beta}\) is a monomorphism, it follows that \(\tilde{\beta} = \operatorname{ker}(\operatorname{coker} \tilde{\beta})\). Moreover, since \(G\) is a left exact functor, we have:
		\begin{equation}
			\label{eq_morf_varphi_psi_comma}
			\Delta \circ F(\varphi)=  G(\psi) \circ \alpha,
		\end{equation}
		thus \((\varphi, \psi)\) is a morphism in \((F/G)\).
		
		Now we will prove that \((\overline{f}, \overline{g}) : \operatorname{Coim}(f,g) \rightarrow \operatorname{Im}(f,g)\) is a morphism in \((F/G)\). From \cref{eq_morf_ind_cat_A}:
		\[
		\Delta \circ F(\varphi) = \Delta \circ F(\overline{f}) \circ F(\theta).
		\]
		By \cref{eq_morf_ind_cat_B} and \cref{eq_morf_coim_comma}:
		\[
		G(\psi) \circ \alpha = G(\overline{g}) \circ G(\Tilde{\theta}) \circ \alpha = G(\overline{g}) \circ\Gamma \circ F(\theta).
		\]
		From \cref{eq_morf_varphi_psi_comma}, we have
		\[
		\Delta \circ F(\overline{f}) \circ F(\theta) =  G(\overline{g}) \circ \Gamma \circ F(\theta).
		\]
		Since \(\theta\) is an epimorphism, it follows that \(\theta = \operatorname{coker}(\operatorname{ker} \theta)\). Moreover, since \(F\) is a right exact functor, we have:
		\begin{equation}
			\label{eq_mor_ind_comma_cand}
			\Delta \circ F(\overline{f}) = G(\overline{g}) \circ \Gamma,
		\end{equation}
		then \((\overline{f},\overline{g})\) is a morphism in \((F/G)\). Moreover, from \cref{eq_morf_ind_cat_A} and \cref{eq_morf_ind_cat_B}
		\[
		(f,g) = (\beta, \Tilde{\beta}) \circ (\overline{f}, \overline{g}) \circ (\theta, \Tilde{\theta}) = \operatorname{ker}(\operatorname{coker}(f,g)) \circ (\overline{f}, \overline{g}) \circ
		\operatorname{coker}(\operatorname{ker}(f,g)).
		\]
		Thus \((\overline{f}, \overline{g})\) is the induced morphism of \((f,g)\). 
		
		Finally we will prove that \((\overline{f}, \overline{g})\) is an isomorphism. Since \(\mathcal{A}\) and \(\mathcal{B}\) are abelian we have \(\overline{f}\) and \(\overline{g}\) isomorphisms. Thus, composing \(G((\overline{g})^{-1})\) on the left and \(F((\overline{f})^{-1})\) on the right in \cref{eq_mor_ind_comma_cand}, we obtain
		\[
		G((\overline{g})^{-1}) \circ \Delta = \Gamma \circ  F((\overline{f})^{-1}),
		\]
		thus \(((\overline{f})^{-1}, (\overline{g})^{-1})\) is a morphism in \((F/G)\). Besides that,
		\[
		(\overline{f}, \overline{g}) \circ ((\overline{f})^{-1},(\overline{g})^{-1}) = (\overline{f} \circ (\overline{f})^{-1}, \overline{g} \circ (\overline{g})^{-1}) = (1_{\operatorname{Im}f}, 1_{\operatorname{Im}g}) = 1_{\operatorname{Im}(f,g)} .
		\]
		\[
		((\overline{f})^{-1},(\overline{g})^{-1}) \circ (\overline{f}, \overline{g}) = ((\overline{f})^{-1} \circ \overline{f}, (\overline{g})^{-1} \circ \overline{g}) = (1_{\operatorname{Coim}f}, 1_{\operatorname{Coim}g}) = 1_{\operatorname{Coim}(f,g)} .
		\]
		Hence, the induced morphism of \((f,g)\) is an isomorphism. Therefore, \((F/G)\) is an abelian category.
	\end{proof}
	
	\begin{remark}
		\label{obs_contraexemplo}
		The converse of this theorem does not hold. Let \(\mathcal{A} = \mathcal{B} = \mathcal{C} = \operatorname{Ab}\) be the category of abelian groups, \(F: \mathcal{A} \rightarrow \mathcal{C}\) be the functor \(F = \mathbb{Z} \oplus -\), i.e., \(F(A) = \mathbb{Z} \oplus A\) and \(F(f) = 1_{\mathbb{Z}} \oplus f\); and \(G:  \mathcal{B} \rightarrow \mathcal{C}\) be the trivial functor \(G(B) = 0\) and \(G(g)=0\). 
		We first show that \(F\) fails to be right exact. Indeed, consider the short exact sequence
		\[
		0 \rightarrow \mathbb{Z} \xrightarrow{\varphi} \mathbb{Z} \xrightarrow{\psi} \mathbb{Z}_{2} \rightarrow 0,
		\]
		where \(\varphi(n) = 2n\) and \(\psi(n) = n\operatorname{mod}2\). Note that \(F(\varphi): \mathbb{Z} \oplus \mathbb{Z} \rightarrow \mathbb{Z} \oplus \mathbb{Z}\) is such that \(F(\varphi)(x,y) = (x,2y) \), this way we have that \(\operatorname{Im}F(\varphi) = \mathbb{Z} \oplus 2\mathbb{Z}\). Besides that, we have \(F(\psi): \mathbb{Z} \oplus \mathbb{Z} \rightarrow \mathbb{Z} \oplus \mathbb{Z}_{2}\) such that \(F(\psi)(x,y) = (x,y\operatorname{mod}2) \), thus \(\operatorname{Ker}F(\psi) = 0 \oplus 2\mathbb{Z}\). Thus, \(\operatorname{Im}F(\varphi) \neq \operatorname{Ker}F(\psi)\), so the sequence \( F(\mathbb{Z}) \xrightarrow{F(\varphi)} F(\mathbb{Z}) \xrightarrow{F(\psi)} F(\mathbb{Z}_{2}) \rightarrow 0\) is not exact, i.e., \(F\) is not a right exact functor. Now we will prove that \((F/G)\) is abelian. The objects are of the form \((A,B,0)\), because \(G(B) = 0 \) for all \(B \in  \mathcal{B}\). Moreover, for every \(f: A \rightarrow A' \in \operatorname{Mor}(\mathcal{A})\) and \(g: B \rightarrow B' \in \operatorname{Mor}(\mathcal{B})\) we have the diagram
		\begin{equation*}
			\begin{tikzcd}
				F(A) \arrow[r, "F(f)"] \arrow[d, "0"'] & F(A') \arrow[d, "0"] \\
				G(B) \arrow[r, "G(g)"']                & G(B')               
			\end{tikzcd}
		\end{equation*}
		is commutative. Thus, given morphisms \(f \in \operatorname{Mor}(\mathcal{A})\) and \(g \in \operatorname{Mor}(\mathcal{B})\) we see that \((f,g)\) is a morphism in \((F/G)\). The functor \(\Psi: (F/G) \rightarrow \mathcal{A} \times \mathcal{B}\) given by 
		\begin{eqnarray*}
			\Psi((A,B,0)) &=&  (A,B) \\
			\Psi((f,g)) &=& (f,g)
		\end{eqnarray*}
		is an equivalence of categories, because \(\Psi\) is full, faithful, and essentially surjective. This way \((F/G) \cong \mathcal{A} \times \mathcal{B} = \operatorname{Ab} \times \operatorname{Ab}\) is abelian, because the product category of the abelian categories is abelian. Hence, \((F/G)\) is abelian although the functor \(F\) fails to be right exact, showing that the converse of \cref{teo_comma_abeliana} does not hold.
	\end{remark}
	
	\section{Grothendieck Group of Comma Categories}
	\label{sect_GG}
	\subsection{Short Exact Sequences}  
	In this section, assume the conditions of \cref{teo_comma_abeliana} hold. We explore the concepts of monomorphism, epimorphism, isomorphism, and exact sequences in the context of comma categories. We analyze the conditions under which these notions in a comma category induce corresponding properties in the original categories and vice versa. Although these results are simple, they play a fundamental role throughout the text.
	
	\begin{lemma}
		Let \( (f,g): (A,B,\alpha) \rightarrow (A',B',\alpha') \) be any morphism in \( (F/G) \).
		\begin{enumerate}
			\item If $(f,g)$ is a monomorphism in $(F/G)$, then $f$ is a monomorphism in $\mathcal{A}$ and $g$ is a monomorphism in $\mathcal{B}$;
			
			\item If $(f,g)$ is an epimorphism in $(F/G)$, then $f$ is an epimorphism in $\mathcal{A}$ and $g$ is an epimorphism in $\mathcal{B}$.
		\end{enumerate}
		\label{mono_epi_comma}
	\end{lemma}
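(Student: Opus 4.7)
The plan is to reduce both claims to the explicit descriptions of kernels and cokernels in $(F/G)$ established in \cref{prop_comma_kernel} and \cref{prop_comma_cokernel}, combined with the standard characterisation that in an abelian category a morphism is a monomorphism exactly when its kernel vanishes and an epimorphism exactly when its cokernel vanishes. Since \cref{teo_comma_abeliana} has just been proved, $(F/G)$ is abelian, so these characterisations are available here.

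For part (1), I would assume $(f,g)$ is a monomorphism in $(F/G)$, so that its kernel is the zero object. By \cref{prop_comma_kernel}, this kernel is the triple $(\operatorname{Ker}f,\operatorname{Ker}g,\beta)$ together with the morphism $(\operatorname{ker}f,\operatorname{ker}g)$, while by \cref{prop_comma_aditiva} the zero object of $(F/G)$ is $(0_{\mathcal{A}},0_{\mathcal{B}},0)$. Because composition and equality of morphisms in $(F/G)$ are defined componentwise, any isomorphism in $(F/G)$ between $(\operatorname{Ker}f,\operatorname{Ker}g,\beta)$ and $(0_{\mathcal{A}},0_{\mathcal{B}},0)$ restricts to isomorphisms $\operatorname{Ker}f \cong 0_{\mathcal{A}}$ in $\mathcal{A}$ and $\operatorname{Ker}g \cong 0_{\mathcal{B}}$ in $\mathcal{B}$. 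This forces $f$ to be a monomorphism in $\mathcal{A}$ and $g$ to be a monomorphism in $\mathcal{B}$.

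For part (2), the argument is completely dual: assuming $(f,g)$ is an epimorphism, its cokernel is zero, and by \cref{prop_comma_cokernel} this cokernel has underlying components $\operatorname{Coker}f$ and $\operatorname{Coker}g$. The same componentwise reasoning yields $\operatorname{Coker}f = 0$ in $\mathcal{A}$ and $\operatorname{Coker}g = 0$ in $\mathcal{B}$, so $f$ and $g$ are epimorphisms in their respective categories.

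No serious obstacle is expected. The only mild subtlety, which I would flag explicitly, is the observation that an object of $(F/G)$ is isomorphic to the zero object precisely when its $\mathcal{A}$- and $\mathcal{B}$-components are zero; this follows at once from the componentwise description of morphisms in $(F/G)$ given in \cref{def_comma}.
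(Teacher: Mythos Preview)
Your proposal is correct and follows essentially the same approach as the paper: both arguments use the explicit componentwise description of $\operatorname{Ker}(f,g)$ and $\operatorname{Coker}(f,g)$ from \cref{prop_comma_kernel} and \cref{prop_comma_cokernel}, together with the mono/epi characterisation via vanishing (co)kernel in an abelian category. Your explicit remark that an isomorphism in $(F/G)$ to the zero object forces the $\mathcal{A}$- and $\mathcal{B}$-components to be zero is a small clarification the paper leaves implicit, but otherwise the proofs coincide.
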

	\begin{proof}
		\noindent
		\begin{enumerate}
			\item Suppose $(f,g)$ is a monomorphism in $(F/G)$. Then, we have \((0_{\mathcal{A}},0_{\mathcal{B}},0) = \operatorname{Ker}(f,g) = (\operatorname{Ker} f, \operatorname{Ker} g, \beta)\). Thus, $\operatorname{Ker} f = 0_{\mathcal{A}}$ and $\operatorname{Ker} g = 0_{\mathcal{B}}$. Then, we have that $f$ is a monomorphism in $\mathcal{A}$ and $g$ is a monomorphism in $\mathcal{B}$.
			
			\item Suppose that $(f,g)$ is an epimorphism in $(F/G)$. Then we have  $(0_{\mathcal{A}},0_{\mathcal{B}},0) = \operatorname{Coker}(f,g) = (\operatorname{Coker} f, \operatorname{Coker} g, \gamma)$. Thus, $\operatorname{Coker} f = 0_{\mathcal{A}}$ and $\operatorname{Coker} g = 0_{\mathcal{B}}$. Hence, it follows that  $f$ is an epimorphism in $\mathcal{A}$ and $g$ is an epimorphism in $\mathcal{B}$.
		\end{enumerate}
	\end{proof}
	
	\begin{corollary}
		If \( (f,g): (A,B,\alpha) \rightarrow (A',B',\alpha') \) is an isomorphism in \( (F/G) \) then \(f:A \rightarrow A'\) is an isomorphism in \(\mathcal{A}\) and  \(g:B \rightarrow B'\) is an isomorphism in \(\mathcal{B}\).
		\label{cor_iso_comma_imp_iso_A_B}
	\end{corollary}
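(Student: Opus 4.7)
The plan is to give a short, direct argument using the explicit description of morphisms in $(F/G)$ as pairs.

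Since $(f,g)\colon(A,B,\alpha)\to(A',B',\alpha')$ is an isomorphism in $(F/G)$, there exists a morphism $(f',g')\colon(A',B',\alpha')\to(A,B,\alpha)$ in $(F/G)$ such that $(f',g')\circ(f,g)=1_{(A,B,\alpha)}=(1_A,1_B)$ and $(f,g)\circ(f',g')=1_{(A',B',\alpha')}=(1_{A'},1_{B'})$. By the definition of composition and identities in $(F/G)$ (as established when verifying that $(F/G)$ is a category), these equalities translate coordinate-wise to
\[
f'\circ f=1_A,\quad f\circ f'=1_{A'},\quad g'\circ g=1_B,\quad g\circ g'=1_{B'}.
\]
Hence $f$ is an isomorphism in $\mathcal{A}$ with inverse $f'$, and $g$ is an isomorphism in $\mathcal{B}$ with inverse $g'$.

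An equivalent route would be to invoke \cref{mono_epi_comma}: since an isomorphism in any category is both a monomorphism and an epimorphism, the lemma yields that $f$ and $g$ are simultaneously mono and epi in the abelian categories $\mathcal{A}$ and $\mathcal{B}$, and abelianness then promotes each of them to an isomorphism. I prefer the first approach as it is more elementary and does not rely on the abelian hypothesis; no step presents an obstacle, the argument being essentially a direct unpacking of the componentwise composition law.
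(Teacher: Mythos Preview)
Your proof is correct. Your primary argument --- unpacking the inverse $(f',g')$ componentwise --- is actually more elementary than the paper's own proof, which takes precisely the alternative route you sketch at the end: the paper invokes \cref{mono_epi_comma} to deduce that $f$ and $g$ are both mono and epi, and then uses abelianness of $\mathcal{A}$ and $\mathcal{B}$ to conclude they are isomorphisms. Your direct approach has the advantage of working in any comma category $(F/G)$ without any hypotheses on $\mathcal{A}$, $\mathcal{B}$, $\mathcal{C}$, $F$, or $G$, whereas the paper's argument requires the abelian setting of \cref{teo_comma_abeliana} (so that mono $+$ epi implies iso, and so that \cref{mono_epi_comma} --- which relies on the computation of kernels and cokernels --- is available). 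The paper's route, on the other hand, keeps the narrative tied to the kernel/cokernel machinery developed just before, which is stylistically consistent with the surrounding section.
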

	\begin{proof}
		Since \((f,g)\) is an isomorphism in \((F/G)\), it follows that \((f,g)\) is both a monomorphism and an epimorphism. By \cref{mono_epi_comma}, we know that \( f \) is a monomorphism and an epimorphism in the abelian category \( \mathcal{A} \), and \( g \) is a monomorphism and an epimorphism in the abelian category \( \mathcal{B} \). Hence, we have \( f \) an isomorphism in \( \mathcal{A} \) and \( g \) an isomorphism in \( \mathcal{B} \).
	\end{proof}
	
	\begin{corollary}
		If
		\begin{equation}
			\mathcal{\mathbf{S}} = 0 \rightarrow (A', B', \alpha') \xrightarrow{(f,g)}(A,B,\alpha)  \xrightarrow{(h,k)} (A'', B'', \alpha'') \rightarrow 0
			\label{seq_S}
		\end{equation}
		is a short exact sequence in $(F/G)$ then $0 \rightarrow A' \xrightarrow{f} A \xrightarrow{h} A'' \rightarrow 0$ is a short exact sequence in $\mathcal{A}$, and $0 \rightarrow B' \xrightarrow{g} B \xrightarrow{k} B'' \rightarrow 0$ is a short exact sequence in $\mathcal{B}$.
		\label{seq_exat_curt_comma}
	\end{corollary}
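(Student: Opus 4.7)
The plan is to reduce exactness in $(F/G)$ to exactness in $\mathcal{A}$ and $\mathcal{B}$ componentwise, combining \cref{mono_epi_comma}, the explicit kernel formula from \cref{prop_comma_kernel}, and \cref{cor_iso_comma_imp_iso_A_B}.

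First I would extract the easy pieces. Since $(f,g)$ is a monomorphism in $(F/G)$, \cref{mono_epi_comma}(1) yields that $f$ is monic in $\mathcal{A}$ and $g$ is monic in $\mathcal{B}$; since $(h,k)$ is an epimorphism, \cref{mono_epi_comma}(2) yields that $h$ is epic in $\mathcal{A}$ and $k$ is epic in $\mathcal{B}$. The equality $(h,k)\circ(f,g) = (0,0)$, which holds because $\mathbf{S}$ is a complex, unpacks componentwise to $h\circ f = 0$ and $k\circ g = 0$.

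The crux is exactness at the middle term. Since $(F/G)$ is abelian by \cref{teo_comma_abeliana} and $\mathbf{S}$ is short exact, $(f,g)$ is (up to unique isomorphism) a kernel of $(h,k)$. But \cref{prop_comma_kernel} realises this kernel explicitly as $(\operatorname{Ker} h, \operatorname{Ker} k, \beta)$ with structural morphism $(\operatorname{ker} h, \operatorname{ker} k)$. By the universal property one obtains a unique isomorphism $(\varphi, \psi) : (A', B', \alpha') \to (\operatorname{Ker} h, \operatorname{Ker} k, \beta)$ in $(F/G)$ satisfying $(\operatorname{ker} h, \operatorname{ker} k) \circ (\varphi, \psi) = (f,g)$. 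Applying \cref{cor_iso_comma_imp_iso_A_B} makes $\varphi$ and $\psi$ individually isomorphisms in $\mathcal{A}$ and $\mathcal{B}$, and the relations $\operatorname{ker} h \circ \varphi = f$ and $\operatorname{ker} k \circ \psi = g$ then identify $f$ with a kernel of $h$ in $\mathcal{A}$ and $g$ with a kernel of $k$ in $\mathcal{B}$. Combined with $h$ and $k$ being epic, this gives the two claimed short exact sequences.

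I do not anticipate any real obstacle beyond being careful that the identification transferred from $(F/G)$ is as subobjects of $A$ and $B$, not merely as abstract objects; the intertwining of the inclusions built into $(\varphi, \psi)$ is precisely what supplies that, so passing through \cref{cor_iso_comma_imp_iso_A_B} is the indispensable step.
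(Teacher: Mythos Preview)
Your argument is correct and follows essentially the same route as the paper: both use \cref{mono_epi_comma} for the outer terms and the componentwise description of kernels in $(F/G)$ for exactness in the middle. The paper phrases the middle step as reading off $\operatorname{Im} f = \operatorname{Ker} h$ and $\operatorname{Im} g = \operatorname{Ker} k$ directly from $\operatorname{Im}(f,g) = \operatorname{Ker}(h,k)$, whereas you make the underlying isomorphism explicit via \cref{cor_iso_comma_imp_iso_A_B}; this is only a cosmetic difference.
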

	\begin{proof}
		By hypothesis, $(f,g)$ is a monomorphism in $(F/G)$, so by  \cref{mono_epi_comma}, we have that $f$ is a monomorphism in $\mathcal{A}$ and $g$ is a monomorphism in $\mathcal{B}$. Furthermore, $(h,k)$ is an epimorphism in $(F/G)$, and again by \cref{mono_epi_comma}, we obtain that $h$ is an epimorphism in $\mathcal{A}$ and $k$ is an epimorphism in $\mathcal{B}$. Finally,
		$\operatorname{Im}(f,g) = \operatorname{Ker}(h,k)$, that is, $(\operatorname{Im} f, \operatorname{Im} g, \beta) = (\operatorname{Ker} h, \operatorname{Ker} k, \tilde{\beta})$, so $\operatorname{Im} f = \operatorname{Ker} h$ and $\operatorname{Im} g = \operatorname{Ker} k$. Thus, $0 \rightarrow A' \xrightarrow{f} A \xrightarrow{h} A'' \rightarrow 0$ is a short exact sequence in $\mathcal{A}$, and $0 \rightarrow B' \xrightarrow{g} B \xrightarrow{k} B'' \rightarrow 0$ is a short exact sequence in $\mathcal{B}$.
	\end{proof}
	
	\begin{remark}
		Note that a short exact sequence in $(F/G)$ does not induce a short exact sequence in $\mathcal{C}$. Indeed, given a short exact sequence $\mathcal{\mathbf{S}}$ in $(F/G)$, as in \cref{seq_S}, by \cref{seq_exat_curt_comma}, we have that $0 \rightarrow A' \xrightarrow{f} A \xrightarrow{h} A'' \rightarrow 0$ is a short exact sequence in $\mathcal{A}$, and $0 \rightarrow B' \xrightarrow{g} B \xrightarrow{k} B'' \rightarrow 0$ is a short exact sequence in $\mathcal{B}$. From the right exactness of \(F\) we have $F(A') \xrightarrow{F(f)} F(A) \xrightarrow{F(h)} F(A'') \rightarrow 0$ an exact sequence in $\mathcal{C}$, and from the left exactness of \(G\) we have $0 \rightarrow G(B') \xrightarrow{G(g)} G(B) \xrightarrow{G(k)} G(B'')$ an exact sequence in $\mathcal{C}$. 
		\label{obs_seq_exac_curt_comma_NAO_seq-exat_curt_C}
	\end{remark}
	
	\begin{remark}
		By \cref{obs_seq_exac_curt_comma_NAO_seq-exat_curt_C} a short exact sequence $\mathcal{\mathbf{S}}$ in $(F/G)$, as in \cref{seq_S}, has the form
		\begin{equation*}
			\begin{tikzcd}
				& F(A') \arrow[r, "F(f)"] \arrow[d, "\alpha'"'] & F(A) \arrow[r, "F(h)"] \arrow[d, "\alpha"] & F(A'') \arrow[r] \arrow[d, "\alpha''"] & 0 \\ 0 \arrow[r] & G(B') \arrow[r, "G(g)"'] & G(B) \arrow[r, "G(k)"']  & G(B'') &  
			\end{tikzcd}
		\end{equation*}
		\label{obs_seq_exat_curta-comma}
	\end{remark}
	
	\begin{lemma}
		Let \((f,g): (A,B,\alpha) \rightarrow (A',B',\alpha')\) be a morphism in \((F/G)\). 
		\begin{enumerate}
			\item If \(f\) is a monomorphism in \(\mathcal{A}\) and \(g\) is a monomorphism in \(\mathcal{B}\) then \((f,g)\) is a monomorphism in \((F/G)\);
			
			\item If \(f\) is an epimorphism in \(\mathcal{A}\) and \(g\) is an epimorphism in \(\mathcal{B}\) then \((f,g)\) is an epimorphism in \((F/G)\).
		\end{enumerate}
		\label{lem_morfi_sist_mono_cat_ini_mono_sist}
	\end{lemma}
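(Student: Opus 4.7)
The plan is to establish both items by directly verifying the corresponding universal property, exploiting the fact that composition in \((F/G)\) is defined componentwise. This avoids going through the explicit kernel/cokernel constructions of \cref{prop_comma_kernel} and \cref{prop_comma_cokernel}, although that route would also work: for (1), since \(\operatorname{Ker}(f,g) = (\operatorname{Ker} f, \operatorname{Ker} g, \beta) = (0_{\mathcal{A}}, 0_{\mathcal{B}}, 0)\) is the zero object of \((F/G)\), the monomorphism conclusion would follow, and dually for (2).

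For item (1), I would fix an object \((X,Y,H) \in (F/G)\) and two morphisms \((h_1,k_1), (h_2,k_2): (X,Y,H) \rightarrow (A,B,\alpha)\) satisfying \((f,g) \circ (h_1,k_1) = (f,g) \circ (h_2,k_2)\). By the componentwise definition of composition in \((F/G)\), this is equivalent to the two equalities \(f \circ h_1 = f \circ h_2\) in \(\mathcal{A}\) and \(g \circ k_1 = g \circ k_2\) in \(\mathcal{B}\). Since \(f\) is a monomorphism in \(\mathcal{A}\) and \(g\) is a monomorphism in \(\mathcal{B}\), left-cancellation yields \(h_1 = h_2\) and \(k_1 = k_2\), hence \((h_1,k_1) = (h_2,k_2)\), proving that \((f,g)\) is a monomorphism in \((F/G)\).

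Item (2) is completely dual: given an object \((X,Y,H)\) and morphisms \((h_1,k_1), (h_2,k_2): (A',B',\alpha') \rightarrow (X,Y,H)\) with \((h_1,k_1) \circ (f,g) = (h_2,k_2) \circ (f,g)\), componentwise one obtains \(h_1 \circ f = h_2 \circ f\) and \(k_1 \circ g = k_2 \circ g\), and right-cancellation by the epimorphisms \(f\) and \(g\) forces \((h_1,k_1) = (h_2,k_2)\).

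There is no genuine obstacle here; the lemma is essentially a formal consequence of the componentwise composition law in \((F/G)\). The statement should be read as the converse of \cref{mono_epi_comma}, so together the two results give a clean componentwise characterization: a morphism \((f,g)\) in \((F/G)\) is a monomorphism (resp. epimorphism) if and only if \(f\) is a monomorphism (resp. epimorphism) in \(\mathcal{A}\) and \(g\) is a monomorphism (resp. epimorphism) in \(\mathcal{B}\).
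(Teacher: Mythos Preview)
Your proof is correct and in fact more elementary than the paper's. The paper argues via the explicit kernel and cokernel computations of \cref{prop_comma_kernel} and \cref{prop_comma_cokernel}: since \(f\) and \(g\) are monomorphisms one has \(\operatorname{Ker}f=0_{\mathcal{A}}\) and \(\operatorname{Ker}g=0_{\mathcal{B}}\), hence \(\operatorname{Ker}(f,g)=(\operatorname{Ker}f,\operatorname{Ker}g,\beta)=(0_{\mathcal{A}},0_{\mathcal{B}},0)\), and dually for cokernels. You instead verify the cancellation property directly from the componentwise composition law. Your route is shorter and, notably, does not use the abelian structure of \((F/G)\) at all: it works in any comma category, with no exactness hypotheses on \(F\) or \(G\). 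The paper's route, by contrast, leans on the machinery already built (and the standing hypotheses of \cref{teo_comma_abeliana}) to identify kernels and cokernels in \((F/G)\). You already noted this alternative in your opening paragraph, so you have effectively presented both arguments.
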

	\begin{proof}
		\noindent
		\begin{enumerate}
			\item Since \(f\) and \(g\) are monomorphisms in \(\mathcal{A}\) and \(\mathcal{B}\) respectively, we have \(\operatorname{Ker}f = 0_\mathcal{A} \) and \(\operatorname{Ker}g = 0_\mathcal{B}\). Then
			\[\operatorname{Ker}(f,g) = (\operatorname{Ker}f, \operatorname{Ker}g,\beta) = (0_\mathcal{A},0_\mathcal{B},\beta) =(0_\mathcal{A},0_\mathcal{B},0).\] 
			Hence, it follows that \((f,g)\) is a monomorphism.
			
			\item Since \(f\) and \(g\) are epimorphisms in \(\mathcal{A}\) and \(\mathcal{B}\) respectively, we have  \(\operatorname{Coker}f = 0_\mathcal{A}\) and \(\operatorname{Coker}g = 0_\mathcal{B}\). Then \[\operatorname{Coker}(f,g) = (\operatorname{Coker}f, \operatorname{Coker}g,\gamma) = (0_\mathcal{A},0_\mathcal{B},\gamma) =(0_\mathcal{A},0_\mathcal{B},0).\] 
			Hence, it follows that \((f,g)\) is an epimorphism.
		\end{enumerate}
	\end{proof}
	
	\begin{corollary}
		Let \( (f,g): (A,B,\alpha) \rightarrow (A',B',\alpha') \) be a morphism in \( (F/G) \).  If \(f:A \rightarrow A'\) is an isomorphism in \(\mathcal{A}\) and  \(g:B \rightarrow B'\) is an isomorphism in \(\mathcal{B}\) then \((f,g)\) is an isomorphism in  \( (F/G) \).
		\label{cor_iso_A_B_imp_iso_comma}
	\end{corollary}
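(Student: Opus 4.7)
The proof is essentially immediate once one has the right candidate for the inverse. My plan is to construct $(f^{-1}, g^{-1})$ directly and verify both that it is a morphism in $(F/G)$ and that it is a two-sided inverse of $(f,g)$.

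First I would recall that by hypothesis $f^{-1}: A' \to A$ and $g^{-1}: B' \to B$ exist in $\mathcal{A}$ and $\mathcal{B}$, respectively. Since $F$ and $G$ are functors, they preserve isomorphisms, so $F(f^{-1}) = F(f)^{-1}$ and $G(g^{-1}) = G(g)^{-1}$. Starting from the commutativity relation $\alpha' \circ F(f) = G(g) \circ \alpha$ witnessing that $(f,g)$ is a morphism in $(F/G)$, I would pre-compose with $F(f^{-1})$ and post-compose with $G(g^{-1})$ to obtain
\[
G(g^{-1}) \circ \alpha' = \alpha \circ F(f^{-1}),
\]
which is precisely the condition that $(f^{-1}, g^{-1}): (A', B', \alpha') \to (A, B, \alpha)$ is a morphism in $(F/G)$.

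It then remains to check that $(f^{-1}, g^{-1})$ is a two-sided inverse. Using the componentwise definition of composition in $(F/G)$,
\[
(f,g) \circ (f^{-1}, g^{-1}) = (f \circ f^{-1}, g \circ g^{-1}) = (1_{A'}, 1_{B'}) = 1_{(A', B', \alpha')},
\]
and symmetrically $(f^{-1}, g^{-1}) \circ (f,g) = 1_{(A, B, \alpha)}$. Hence $(f,g)$ is an isomorphism.

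There is essentially no obstacle here; the only substantive observation is that functors preserve isomorphisms, so the commutativity condition transfers cleanly to the inverse pair. An alternative route would be to invoke \cref{lem_morfi_sist_mono_cat_ini_mono_sist} to see that $(f,g)$ is both a monomorphism and an epimorphism in $(F/G)$ and then apply the fact that $(F/G)$ is abelian by \cref{teo_comma_abeliana}; I prefer the direct construction above since it exhibits the inverse explicitly and does not rely on the abelian hypothesis.
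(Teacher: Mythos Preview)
Your proof is correct, but it takes a different route from the paper's. The paper argues via \cref{lem_morfi_sist_mono_cat_ini_mono_sist}: since $f$ and $g$ are isomorphisms they are in particular monomorphisms and epimorphisms, so $(f,g)$ is both a monomorphism and an epimorphism in $(F/G)$, and because $(F/G)$ is abelian (by \cref{teo_comma_abeliana}) this forces $(f,g)$ to be an isomorphism. This is exactly the alternative you sketch at the end. Your primary argument instead builds the inverse $(f^{-1},g^{-1})$ explicitly, checking the commutativity square by pre- and post-composition; this is more elementary and, as you note, does not depend on $(F/G)$ being abelian at all---it holds in any comma category. The paper's route has the virtue of reusing the mono/epi lemmas already in place, while yours is self-contained and slightly more general.
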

	\begin{proof}
		Given that \(f\) and \(g\) are isomorphisms in \( \mathcal{A} \) and \( \mathcal{B} \), respectively, they are both monomorphisms and epimorphisms in \( \mathcal{A} \) and \( \mathcal{B} \), respectively. By \cref{lem_morfi_sist_mono_cat_ini_mono_sist} we conclude that \((f,g)\) is both a monomorphism and an epimorphism in the abelian category \((F/G)\), therefore it follows that \((f,g)\) is an isomorphism in \((F/G)\).
	\end{proof}
	
	\subsection{Grothendieck Group}  
	In this section, we analyze how the Grothendieck group of a comma category relates to the Grothendieck groups of the original categories.
	By \cref{teo_comma_abeliana}, we have that $(F/G)$ is an abelian category, and thus we can talk about the Grothendieck group of $(F/G)$.
	
	\begin{remark}
		\label{obs_GG_comma_nao_depende_GG_C}
		For every object \((A,B,\alpha)\) in \((F/G)\) we have \([(A,B,\alpha)] = [(A,B,0)]\) in \(K((F/G))\).
		
		In fact, let $(A,B,\alpha)$ be an arbitrary object in $(F/G)$. Note that the diagram below commutes.
		\begin{equation*}
			\begin{tikzcd}
				& 0 \arrow[r, "0"] \arrow[d, "0"'] & F(A) \arrow[r, "F(1_{A})"] \arrow[d, "\alpha"] & F(A) \arrow[r] \arrow[d, "0"] & 0 \\
				0 \arrow[r] & G(B) \arrow[r, "G(1_{B})"']      & G(B) \arrow[r, "0"']    & 0  &  
			\end{tikzcd}
		\end{equation*}
		Thus, by \cref{obs_seq_exat_curta-comma}, 
		$$0 \rightarrow (0, B, 0) \xrightarrow{(0,1_{B})}(A,B,\alpha)  \xrightarrow{(1_{A},0)} (A, 0, 0) \rightarrow 0$$
		is a short exact sequence in $(F/G)$. This leads to the equality
		\begin{equation}
			[(A,B,\alpha)] = [(0, B, 0)] + [(A, 0, 0) ] 
			\label{GG_alf}
		\end{equation}
		in $K((F/G))$. For \(\alpha = 0\) we have
		\begin{equation}
			[(A,B,0)] = [(0, B, 0)] + [(A, 0, 0) ] 
			\label{GG_0}
		\end{equation}
		in $K((F/G))$.
		Thus, by \cref{GG_alf} and \cref{GG_0}, we have
		\begin{equation*}
			[(A,B,\alpha)] = [(A,B,0)]
		\end{equation*}
		in $K((F/G))$.
	\end{remark}
	
	Define the full subcategories \( \mathcal{\Tilde{A}} \) and \( \mathcal{\Tilde{B}} \) in \( (F/G) \), whose objects are:
	$$\operatorname{Obj}\mathcal{\Tilde{A}} = \{(A,0,0);\ A \in \mathcal{A}\} \ \ \ \mbox{ and } \ \ \ \operatorname{Obj}\mathcal{\Tilde{B}} = \{(0,B,0);\ B \in \mathcal{B}\}$$
	and whose morphisms coincide with the morphisms in $(F/G)$. 
	
	\begin{lemma}
		The categories \( \mathcal{\Tilde{A}} \) and \( \mathcal{\Tilde{B}} \) are abelian categories.
		\label{lem_sub_plen_sist}
	\end{lemma}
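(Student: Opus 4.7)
The plan is to exhibit explicit equivalences of categories $\mathcal{A} \simeq \mathcal{\Tilde{A}}$ and $\mathcal{B} \simeq \mathcal{\Tilde{B}}$, and then invoke the standard fact that being abelian is preserved under equivalence of categories. Since $\mathcal{A}$ and $\mathcal{B}$ are abelian by assumption (in the setting of \cref{teo_comma_abeliana}), the conclusion follows immediately.

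First I would define the functor $\Phi_{\mathcal{A}}: \mathcal{A} \rightarrow \mathcal{\Tilde{A}}$ by $\Phi_{\mathcal{A}}(A) = (A, 0_{\mathcal{B}}, 0)$ on objects and $\Phi_{\mathcal{A}}(f) = (f, 0)$ on morphisms, and similarly $\Phi_{\mathcal{B}}: \mathcal{B} \rightarrow \mathcal{\Tilde{B}}$ sending $B \mapsto (0_{\mathcal{A}}, B, 0)$ and $g \mapsto (0, g)$. The pair $(f,0)$ is a morphism in $(F/G)$ because the relevant diagram degenerates: we have $F(0_{\mathcal{A}}) = 0_{\mathcal{C}} = G(0_{\mathcal{B}})$ (since $F$ and $G$ are additive, as in \cref{prop_comma_aditiva}), so both compositions in the square are zero and commutativity is automatic. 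Functoriality of $\Phi_{\mathcal{A}}$ (and $\Phi_{\mathcal{B}}$) is immediate from the componentwise definition of composition in $(F/G)$.

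Next I would verify that $\Phi_{\mathcal{A}}$ is an equivalence by checking it is fully faithful and essentially surjective. Essential surjectivity is tautological: every object of $\mathcal{\Tilde{A}}$ is by definition of the form $(A, 0, 0)$ for some $A \in \mathcal{A}$. For fullness and faithfulness, any morphism $(f,g): (A,0,0) \rightarrow (A',0,0)$ in $(F/G)$ must have $g: 0_{\mathcal{B}} \rightarrow 0_{\mathcal{B}}$, which forces $g = 0$ since $0_{\mathcal{B}}$ is a zero object; thus the assignment $f \mapsto (f,0)$ gives a bijection $\operatorname{Hom}_{\mathcal{A}}(A,A') \xrightarrow{\sim} \operatorname{Hom}_{\mathcal{\Tilde{A}}}((A,0,0),(A',0,0))$. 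The argument for $\Phi_{\mathcal{B}}$ is entirely analogous.

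Finally, since $\Phi_{\mathcal{A}}$ and $\Phi_{\mathcal{B}}$ are equivalences of categories and the property of being an abelian category is invariant under equivalence (each piece of the abelian structure — additivity, existence of kernels and cokernels, and the isomorphism between coimage and image — transports along the equivalence), $\mathcal{\Tilde{A}}$ and $\mathcal{\Tilde{B}}$ inherit the abelian structure from $\mathcal{A}$ and $\mathcal{B}$ respectively. There is no genuine obstacle here; the only minor subtlety is confirming that the zero morphism $g: 0_{\mathcal{B}} \to 0_{\mathcal{B}}$ makes $(f,0)$ satisfy the commuting square in \cref{def_comma}, which we addressed above.
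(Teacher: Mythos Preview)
Your argument is correct and complete, but it takes a genuinely different route from the paper. The paper works \emph{inside} the ambient abelian category $(F/G)$: it checks that $\tilde{\mathcal{A}}$ and $\tilde{\mathcal{B}}$ contain the zero object and are closed under coproducts, kernels, and cokernels (reading these off from the explicit descriptions in \cref{prop_comma_aditiva}, \cref{prop_comma_kernel}, \cref{prop_comma_cokernel}), and then invokes a standard criterion (Rotman, Proposition~5.92) for a full additive subcategory of an abelian category to be abelian. Your approach instead exhibits equivalences $\mathcal{A}\simeq\tilde{\mathcal{A}}$ and $\mathcal{B}\simeq\tilde{\mathcal{B}}$ directly and transports the abelian structure across them. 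Your route is arguably cleaner: it does not depend on the prior computations of kernels and cokernels in $(F/G)$, nor on the ambient category $(F/G)$ being abelian, and it avoids the external citation. The paper's route, on the other hand, makes the embedding of $\tilde{\mathcal{A}}$ into $(F/G)$ more transparent and reuses work already done; this is convenient later when short exact sequences in $\tilde{\mathcal{A}}$ are compared with those in $(F/G)$.
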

	\begin{proof}
		We have seen that a zero object in \( (F/G) \) is of the form \( (0_{\mathcal{A}}, 0_{\mathcal{B}},0) \), where \( 0_{\mathcal{A}} \in \mathcal{A} \) and \( 0_{\mathcal{B}} \in \mathcal{B} \). Hence, the zero object of \( (F/G) \) belongs to both \( \mathcal{\Tilde{A}} \) and \( \mathcal{\Tilde{B}} \).  
		
		Given objects \( (A,0,0) \) and \( (A',0,0) \) in \( \mathcal{A} \), consider \( (A \oplus A', i_{1}^{A}, i_{2}^{A}) \) as the coproduct of \( A \) and \( A' \). Based on the previous results, the coproduct of \( (A,0,0) \) and \( (A',0,0) \) is given by  
		\[
		((A \oplus A',0,0), (i_{1}^{A},0),(i_{2}^{A},0)),
		\]
		with \((A \oplus A',0,0) \in \operatorname{Obj}(\mathcal{\Tilde{A}})\). Similarly, the coproduct between \( (0,B,0) \) and \( (0,B',0) \), for any objects \( B,B' \in \mathcal{B} \), is given by \(((0,B \oplus B',0), (0,i_{1}^{B}),(0,i_{2}^{B}))\), with \((0,B \oplus B',0) \in  \mathcal{\Tilde{B}} \).  
		
		Let \( (f,0) \) be a morphism in \( \mathcal{\Tilde{A}} \), by our previous calculations, we have 
		\[
		\operatorname{Ker}(f,0) = (\operatorname{Ker}f, 0, 0) \in \operatorname{Obj}(\mathcal{\Tilde{A}}) \mbox{ and } \operatorname{ker}(f,0) = (\operatorname{ker}f, 0) \in \operatorname{Mor}(\mathcal{\Tilde{A}}).
		\]
		\[
		\operatorname{Coker}(f,0) = (\operatorname{Coker}f, 0, 0) \in \operatorname{Obj}(\mathcal{\Tilde{A}}) \mbox{ and } \operatorname{coker}(f,0) = (\operatorname{coker}f, 0) \in \operatorname{Mor}(\mathcal{\Tilde{A}}).
		\]
		Similarly, let \( (0,g) \) be a morphism in \( \mathcal{\Tilde{B}} \), we obtain  
		\[
		\operatorname{Ker}(0,g) = (0, \operatorname{Ker}g,0) \in \operatorname{Obj}(\mathcal{\Tilde{B}}) \mbox{ and }
		\operatorname{ker}(0,g) = (0,\operatorname{ker}g) \in \operatorname{Mor}(\mathcal{\Tilde{B}}),
		\]
		\[
		\operatorname{Coker}(0,g) = (0,\operatorname{Coker}g,0) \in \operatorname{Obj}(\mathcal{\Tilde{B}}) \mbox{ and }
		\operatorname{coker}(0,g) = (0,\operatorname{coker}g) \in \operatorname{Mor}(\mathcal{\Tilde{B}}).
		\]
		Thus, by Proposition 5.92 of the \cite{rotman2009introduction} we have to \( \mathcal{\Tilde{A}} \) and \( \mathcal{\Tilde{B}} \) are abelian categories.
	\end{proof}

	\begin{proposition}
		In the previous notation,  $K((F/G)) \cong K(\mathcal{\Tilde{A}}) \oplus  K(\mathcal{\Tilde{B}})$.
		\label{prop_GG_subcat_plen}
	\end{proposition}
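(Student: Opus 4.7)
The plan is to use the universal property of the Grothendieck group (\cref{teo_hom_idz_funct_adit}) to build a homomorphism $K((F/G)) \to K(\tilde{\mathcal{A}}) \oplus K(\tilde{\mathcal{B}})$, and to build an inverse out of the inclusions of the full subcategories $\tilde{\mathcal{A}}, \tilde{\mathcal{B}} \hookrightarrow (F/G)$.

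First I would define
\[
\Phi : \operatorname{Obj}((F/G)) \longrightarrow K(\tilde{\mathcal{A}}) \oplus K(\tilde{\mathcal{B}}), \qquad (A,B,\alpha) \longmapsto \bigl([(A,0,0)],\, [(0,B,0)]\bigr).
\]
To apply the universal property I must check that $\Phi$ is additive in the sense of \cref{defn_adit_seq_ex_curt}. Given any short exact sequence as in \cref{seq_S}, \cref{seq_exat_curt_comma} supplies short exact sequences $0 \to A' \to A \to A'' \to 0$ in $\mathcal{A}$ and $0 \to B' \to B \to B'' \to 0$ in $\mathcal{B}$. Applying the constructions of \cref{lem_sub_plen_sist} (where kernels and cokernels of morphisms in $\tilde{\mathcal{A}}$ and $\tilde{\mathcal{B}}$ were computed componentwise) gives short exact sequences $0 \to (A',0,0) \to (A,0,0) \to (A'',0,0) \to 0$ in $\tilde{\mathcal{A}}$ and $0 \to (0,B',0) \to (0,B,0) \to (0,B'',0) \to 0$ in $\tilde{\mathcal{B}}$. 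Hence $\Phi$ is additive, and \cref{teo_hom_idz_funct_adit} yields a unique homomorphism $\overline{\Phi} : K((F/G)) \to K(\tilde{\mathcal{A}}) \oplus K(\tilde{\mathcal{B}})$ with $\overline{\Phi}([(A,B,\alpha)]) = ([(A,0,0)], [(0,B,0)])$.

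Next I would construct an inverse. The inclusion functors $\iota_{\mathcal{A}}: \tilde{\mathcal{A}} \hookrightarrow (F/G)$ and $\iota_{\mathcal{B}}: \tilde{\mathcal{B}} \hookrightarrow (F/G)$ are exact (again by the componentwise formulas for kernels and cokernels from \cref{lem_sub_plen_sist}), so they induce homomorphisms on Grothendieck groups, which I combine into
\[
\Psi : K(\tilde{\mathcal{A}}) \oplus K(\tilde{\mathcal{B}}) \longrightarrow K((F/G)), \qquad ([(A,0,0)],\,[(0,B,0)]) \longmapsto [(A,0,0)] + [(0,B,0)].
\]

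To conclude I would verify $\Psi \circ \overline{\Phi} = \operatorname{id}$ and $\overline{\Phi} \circ \Psi = \operatorname{id}$ on generators. The composite $\overline{\Phi} \circ \Psi$ is the identity because $\overline{\Phi}([(A,0,0)]) = ([(A,0,0)],0)$ and $\overline{\Phi}([(0,B,0)]) = (0,[(0,B,0)])$ by direct computation. For $\Psi \circ \overline{\Phi}$, one has $\Psi \overline{\Phi}([(A,B,\alpha)]) = [(A,0,0)] + [(0,B,0)]$, and \cref{obs_GG_comma_nao_depende_GG_C} gives exactly $[(A,B,\alpha)] = [(A,B,0)] = [(A,0,0)] + [(0,B,0)]$, closing the argument. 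The main technical point that makes everything go through is \cref{obs_GG_comma_nao_depende_GG_C}, which encodes the fact that morphisms $\alpha$ in $\mathcal{C}$ are invisible at the level of $K((F/G))$; without it, surjectivity of $\Psi$ would fail since $\Psi$'s image a priori only contains classes of objects of the form $(A,0,0)$ or $(0,B,0)$.
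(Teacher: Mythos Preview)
Your proof is correct and follows essentially the same route as the paper: both define the forward map $K((F/G)) \to K(\tilde{\mathcal{A}}) \oplus K(\tilde{\mathcal{B}})$ via the universal property with the same formula and verify additivity in the same way. The only difference is that the paper checks injectivity and surjectivity of the map directly rather than constructing an explicit inverse $\Psi$; your two-sided inverse argument is if anything cleaner, since checking both composites on generators sidesteps the issue that a general element of $K((F/G))$ is a formal $\mathbb{Z}$-combination rather than a single class $[(A,B,\alpha)]$.
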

	\begin{proof}
		Define $\eta: \operatorname{Obj}((F/G)) \rightarrow K(\mathcal{\Tilde{A}}) \oplus  K(\mathcal{\Tilde{B}})$ such that
		\begin{equation*}
			\eta((A,B,\alpha)) = [(A,0,0)] \oplus [(0,B,0)].
		\end{equation*}
		First, we show that the function \( \eta \) is additive. Indeed, consider
		\[
		0 \rightarrow (A', B', \alpha') \rightarrow (A, B, \alpha) \rightarrow (A'', B'', \alpha'') \rightarrow 0
		\]  
		any short exact sequence in \( (F/G) \). 
		In this case, we obtain the following diagram:
		$$  
		\begin{tikzcd}
			& F(A') \arrow[r] \arrow[d, "\alpha'"'] & F(A) \arrow[r] \arrow[d, "\alpha"'] & F(A'') \arrow[r] \arrow[d, "\alpha''"] & 0 \\
			0 \arrow[r] & G(B') \arrow[r]                       & G(B) \arrow[r]                      & G(B'')                                 &  
		\end{tikzcd}
		$$
		that induces
		$$
		\begin{tikzcd}
			& F(A') \arrow[r] \arrow[d, "0"'] & F(A) \arrow[r] \arrow[d, "0"'] & F(A'') \arrow[r] \arrow[d, "0"] & 0 \\ 0 \arrow[r] & 0 \arrow[r]        & 0 \arrow[r]  & 0  &  
		\end{tikzcd}
		$$
		showing that the sequence
		\begin{equation*}
			0 \rightarrow (A',0,0) \rightarrow (A,0,0) \rightarrow (A'',0,0) \rightarrow 0
		\end{equation*}
		is short exact in $\mathcal{\Tilde{A}}$. Then
		\begin{equation}
			[(A,0,0)] = [(A',0,0)] + [(A'',0,0)]
			\label{K(Atil)}
		\end{equation}
		in $K(\mathcal{\Tilde{A}})$. Moreover, we have the following diagram:
		$$
		\begin{tikzcd}
			& 0 \arrow[r] \arrow[d, "0"'] & 0 \arrow[r] \arrow[d, "0"] & 0 \arrow[r] \arrow[d, "0"] & 0 \\ 0 \arrow[r] & G(B') \arrow[r]             & G(B) \arrow[r] & G(B'')  &  
		\end{tikzcd}
		$$
		which tells us that the sequence 
		\begin{equation*}
			0 \rightarrow (0,B',0) \rightarrow (0,B,0) \rightarrow (0,B'',0) \rightarrow 0
		\end{equation*}
		is a short exact sequence in \( \mathcal{\Tilde{B}} \). Thus,
		\begin{equation}
			[(0,B,0)] = [(0,B',0)] + [(0,B'',0)]
			\label{K(Btil)}
		\end{equation}
		in $K(\mathcal{\Tilde{B}})$. 
		And so, by the \cref{K(Atil)} and \cref{K(Btil)}
		\begin{eqnarray*}
			\eta((A,B,\alpha)) 
			& = &
			[(A,0,0)] \oplus [(0,B,0)] \\
			& = &
			\left( [(A',0,0)] + [(A'',0,0)]) \oplus  ([(0,B',0)] + [(0,B'',0)] \right) \\
			& = &
			[(A',0,0)] \oplus [(0,B',0)]  + [(A'',0,0)] \oplus [(0,B'',0)]\\
			& = &
			\eta((A',B',\alpha'))  + \eta((A'',B'',\alpha'')) .
		\end{eqnarray*}
		This shows that \( \eta \) is additive in short exact sequences of \( (F/G) \) and, by the universal property of the Grothendieck group, induces a group homomorphism  
		\[
		\overline{\eta} : K((F/G)) \rightarrow K(\mathcal{\Tilde{A}}) \oplus  K(\mathcal{\Tilde{B}})
		\]  
		such that  
		\[
		\overline{\eta}\left( [(A,B,\alpha)]\right) =  [(A,0,0)] \oplus [(0,B,0)].
		\]  
		We will show that \( \overline{\eta} \) is a group isomorphism. To prove injectivity, assume
		\[
		\overline{\eta}\left( [(A,B,\alpha)]\right) = 0.
		\]  
		Thus,  
		\[
		\overline{\eta}\left( [(A,B,\alpha)] \right) = 0
		\Rightarrow
		[(A,0,0)] \oplus [(0,B,0)] = 0 
		\Rightarrow
		[(A,0,0)] = 0 = [(0,B,0)].
		\]  
		By \cref{obs_GG_comma_nao_depende_GG_C} and by the equations above, we have 
		\[
		[(A,B,\alpha)] = [(A,B,0)] =
		[(A,0,0)] + [(0,B,0)] = 0 + 0 = 0.
		\]  
		Consequently, \([(A,B,\alpha)] = 0\) in \(K((F/G))\), thus \( \overline{\eta} \) is injective.  
		
		To prove surjectivity, let
		\( [(Z,0,0)] \oplus [(0,W,0)] \in K(\mathcal{\Tilde{A}}) \oplus  K(\mathcal{\Tilde{B}}) \) arbitrary. In this case, take \([(Z,W,0)] \in K((F/G))\). Then
		\[
		\overline{\eta}([(Z,W,0)]) =
		[(Z,0,0)] \oplus [(0,W,0)].
		\]  
		This shows the surjectivity of \( \overline{\eta} \). Therefore, \( \overline{\eta} \) is a group isomorphism. Hence, \(K((F/G)) \cong K(\mathcal{\Tilde{A}}) \oplus  K(\mathcal{\Tilde{B}})\).  
	\end{proof}
	\begin{proposition}
		\label{prop_iso_GG-til}
		In the previous notation,  \( K(\mathcal{\Tilde{A}}) \cong K(\mathcal{A}) \) and \( K(\mathcal{\Tilde{B}}) \cong K(\mathcal{B})\). 
	\end{proposition}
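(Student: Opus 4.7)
The plan is to exhibit, for each of the two statements, a functor between the abelian category and its tilde-version that is an equivalence of categories and preserves (and reflects) short exact sequences; the isomorphism on Grothendieck groups then follows from the universal property \cref{teo_hom_idz_funct_adit}.

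I will carry out the argument for \(K(\mathcal{\tilde{A}}) \cong K(\mathcal{A})\); the case of \(\mathcal{B}\) is symmetric. Define \(\Psi: \mathcal{A} \rightarrow \mathcal{\tilde{A}}\) on objects by \(\Psi(A) = (A,0,0)\) and on morphisms by \(\Psi(f) = (f,0)\). Since \(F\) and \(G\) are additive we have \(F(0) = 0 = G(0)\), so the commutativity square in \cref{def_comma} for a morphism \((A,0,0) \to (A',0,0)\) consists of zero vertical arrows and is automatically satisfied. Hence every morphism of \(\mathcal{\tilde{A}}\) has the form \((f,0)\) with \(f \in \operatorname{Hom}_{\mathcal{A}}(A,A')\), and \(\Psi\) is bijective on Hom-sets. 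It is clearly the identity on isomorphism classes of objects, so \(\Psi\) is an isomorphism of categories (in particular an equivalence).

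Next I would check that \(\Psi\) and its inverse transport short exact sequences. On the one hand, any short exact sequence in \(\mathcal{\tilde{A}}\) is in particular short exact in \((F/G)\), so by \cref{seq_exat_curt_comma} its image under the projection \((A,0,0) \mapsto A\) is short exact in \(\mathcal{A}\). Conversely, given a short exact sequence \(0 \to A' \xrightarrow{f} A \xrightarrow{h} A'' \to 0\) in \(\mathcal{A}\), applying \(\Psi\) yields \(0 \to (A',0,0) \xrightarrow{(f,0)} (A,0,0) \xrightarrow{(h,0)} (A'',0,0) \to 0\); by \cref{lem_morfi_sist_mono_cat_ini_mono_sist} (applied with the second component being an iso \(0 \to 0\)) the maps are mono and epi respectively, and the kernel/cokernel computations used in \cref{prop_comma_kernel} and \cref{prop_comma_cokernel} show \(\operatorname{Im}(f,0) = (\operatorname{Im} f, 0, 0) = (\operatorname{Ker} h, 0, 0) = \operatorname{Ker}(h,0)\), so this is short exact in \(\mathcal{\tilde{A}}\).

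With this in hand, the map \(\eta_{\mathcal{A}}: \operatorname{Obj}(\mathcal{A}) \to K(\mathcal{\tilde{A}})\), \(A \mapsto [(A,0,0)]\), is additive on short exact sequences, and similarly the map \(\tilde{\eta}_{\mathcal{A}}: \operatorname{Obj}(\mathcal{\tilde{A}}) \to K(\mathcal{A})\), \((A,0,0) \mapsto [A]\), is additive. By \cref{teo_hom_idz_funct_adit} these extend to group homomorphisms \(\overline{\eta}_{\mathcal{A}}: K(\mathcal{A}) \to K(\mathcal{\tilde{A}})\) and \(\overline{\tilde{\eta}}_{\mathcal{A}}: K(\mathcal{\tilde{A}}) \to K(\mathcal{A})\); they are mutually inverse on generators and hence on the whole groups. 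The same argument with \(F\) and \(G\) swapping roles — noting that the vertical maps in the relevant commutativity squares are again forced to be zero — gives \(K(\mathcal{\tilde{B}}) \cong K(\mathcal{B})\). The only mild subtlety, and the one place that needs care, is the verification that short exact sequences in \(\mathcal{\tilde{A}}\) and \(\mathcal{\tilde{B}}\) are computed componentwise, which I would settle by invoking the explicit kernel and cokernel formulas established in the previous section rather than redoing them.
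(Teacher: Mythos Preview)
Your proof is correct and follows essentially the same approach as the paper: both construct additive functions $\operatorname{Obj}(\mathcal{A}) \to K(\tilde{\mathcal{A}})$ and $\operatorname{Obj}(\tilde{\mathcal{A}}) \to K(\mathcal{A})$, invoke \cref{teo_hom_idz_funct_adit} to obtain group homomorphisms, and check these are mutually inverse on generators. Your extra framing via an explicit equivalence of categories $\Psi$ is a slight reorganization but not a different idea; the only cosmetic difference is that the paper verifies exactness of $0 \to (A',0,0) \to (A,0,0) \to (A'',0,0) \to 0$ by writing out the two-row diagram (invoking right exactness of $F$ for the top row), whereas you argue directly via \cref{lem_morfi_sist_mono_cat_ini_mono_sist} and the componentwise kernel formula from \cref{prop_comma_kernel}, which is equally valid.
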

	
	\begin{proof}
		We will show that \( K(\mathcal{\Tilde{A}}) \cong K(\mathcal{A}) \). Define \( \varphi: \operatorname{Obj}(\Tilde{\mathcal{A}}) \rightarrow K(\mathcal{A}) \) such that
		$$\varphi((A,0,0)) = [A].$$
		First, we will show that the function \(  \varphi \) is additive. Indeed, consider
		\begin{equation*}
			0 \rightarrow (A',0,0) \rightarrow (A,0,0) \rightarrow (A'',0,0) \rightarrow 0
		\end{equation*}
		a short exact sequence in \( \Tilde{\mathcal{A}} \).  By \cref{seq_exat_curt_comma}, we know that the sequence
		\begin{equation*}
			0 \rightarrow A' \rightarrow A \rightarrow A'' \rightarrow 0
		\end{equation*}
		is short exact in \( \mathcal{A} \). Hence,
		\begin{equation*}
			[A] = [A'] + [A'']
		\end{equation*}
		in \( K(\mathcal{A}) \).
		Thus,
		\begin{equation*}
			\varphi((A,0,0)) = [A] = [A'] + [A''] = \varphi((A',0,0)) + \varphi((A'',0,0)),
		\end{equation*}
		meaning that \( \varphi \) is additive in short exact sequences of \( \Tilde{\mathcal{A}} \). By the universal property of the Grothendieck group, \( \varphi \) induces a group homomorphism \( \overline{\varphi}: K(\mathcal{\Tilde{A}}) \rightarrow K(\mathcal{A}) \) such that
		\[
		\overline{\varphi}\left( [(A,0,0)]\right) = [A].
		\]
		To find the inverse of \( \overline{\varphi} \), define \( \psi: \operatorname{Obj}(\mathcal{A}) \rightarrow K(\Tilde{\mathcal{A}}) \) such that \(\psi(Z) = [(Z,0,0)]\). Let
		\begin{equation*}
			0 \rightarrow Z' \rightarrow Z \rightarrow Z'' \rightarrow 0
		\end{equation*}
		be a short exact sequence in \( \mathcal{A} \). By the right exactness of \( F \), we get
		\begin{equation*}
			F(Z') \rightarrow F(Z) \rightarrow F(Z'') \rightarrow 0
		\end{equation*}
		an exact sequence in \( \mathcal{C} \). This sequence induces the following diagram:
		$$
		\begin{tikzcd}
			& F(Z') \arrow[r] \arrow[d, "0"'] & F(Z) \arrow[r] \arrow[d, "0"] & F(Z'') \arrow[r] \arrow[d, "0"] & 0 \\ 0 \arrow[r] & 0 \arrow[r]        & 0 \arrow[r] & 0  &  
		\end{tikzcd}
		$$
		which corresponds to the exact short sequence in \( \Tilde{\mathcal{A}} \) given by
		$$0 \rightarrow (Z',0,0) \rightarrow (Z,0,0) \rightarrow (Z'',0,0) \rightarrow 0.$$
		Consequently,
		\begin{equation*}
			[(Z,0,0)] = [(Z',0,0)] + [(Z'',0,0)]
		\end{equation*}
		in \( K(\mathcal{\Tilde{A}}) \). Therefore,
		\begin{equation*}
			\psi(Z) = [(Z,0,0)] = [(Z',0,0)] + [(Z'',0,0)] = \psi(Z') + \psi(Z''),
		\end{equation*}
		meaning that \( \psi \) is additive in short exact sequences of \( \mathcal{A} \). By the universal property of the Grothendieck group, \( \psi \) induces a group homomorphism
		\( \overline{\psi}: K(\mathcal{A}) \rightarrow K(\Tilde{\mathcal{A}}) \) such that
		$$\overline{\psi}([Z]) = [(Z,0,0)].$$
		Thus,
		$$(\overline{\psi} \circ \overline{\varphi})\left( [(A,0,0)] \right) = \overline{\psi}\left( [A] \right) = [(A,0,0)].$$
		$$(\overline{\varphi} \circ \overline{\psi})\left( [Z]\right) = \overline{\varphi}\left( [(Z,0,0)]\right) =  [Z].$$
		This shows that \( \overline{\psi} \) is the inverse of \( \overline{\varphi} \), i.e., \( K(\mathcal{\Tilde{A}}) \cong K(\mathcal{A}) \). Similarly, we prove that \( K(\mathcal{\Tilde{B}}) \cong K(\mathcal{B}) \).
	\end{proof}
	
	\begin{theorem}
		Let \( \mathcal{A} \), \( \mathcal{B} \), and \( \mathcal{C} \) be abelian categories, \( F: \mathcal{A} \rightarrow \mathcal{C} \) a right exact functor, and \( G: \mathcal{B} \rightarrow \mathcal{C} \) a left exact functor. Then, \( K((F/G)) \cong K(\mathcal{A}) \oplus K(\mathcal{B}) \).
		\label{teo_GG_comma}
	\end{theorem}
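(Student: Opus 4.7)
The plan is to deduce the theorem immediately from the two structural results that have already been established in the preceding subsection. Concretely, \cref{prop_GG_subcat_plen} gives the isomorphism $K((F/G)) \cong K(\widetilde{\mathcal{A}}) \oplus K(\widetilde{\mathcal{B}})$ obtained by sending $[(A,B,\alpha)]$ to $[(A,0,0)] \oplus [(0,B,0)]$, and \cref{prop_iso_GG-til} provides group isomorphisms $K(\widetilde{\mathcal{A}}) \cong K(\mathcal{A})$ and $K(\widetilde{\mathcal{B}}) \cong K(\mathcal{B})$, induced by the evident forgetful maps. Composing these yields the desired $K((F/G)) \cong K(\mathcal{A}) \oplus K(\mathcal{B})$.

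The only substantive verification required is that the categories $\widetilde{\mathcal{A}}$ and $\widetilde{\mathcal{B}}$ are abelian, so that their Grothendieck groups are defined and the previous propositions apply; but this is precisely the content of \cref{lem_sub_plen_sist}. The hypotheses of \cref{teo_comma_abeliana} (i.e., right exactness of $F$ and left exactness of $G$) are exactly what is needed both to make $(F/G)$ abelian (so $K((F/G))$ itself makes sense) and to guarantee that the comparison maps in \cref{prop_iso_GG-til} are well defined --- in particular, right exactness of $F$ was used to push short exact sequences in $\mathcal{A}$ forward to short exact sequences in $\widetilde{\mathcal{A}}$ via the diagram trick, and dually for $G$.

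The main --- indeed only --- step of the proof is thus an explicit description of the isomorphism. I would write
\[
\Phi : K((F/G)) \xrightarrow{\overline{\eta}} K(\widetilde{\mathcal{A}}) \oplus K(\widetilde{\mathcal{B}}) \xrightarrow{\overline{\varphi} \oplus \overline{\varphi}'} K(\mathcal{A}) \oplus K(\mathcal{B}),
\]
where $\overline{\varphi}$ and $\overline{\varphi}'$ are the isomorphisms of \cref{prop_iso_GG-til}, and note that $\Phi([(A,B,\alpha)]) = [A] \oplus [B]$. Being a composition of group isomorphisms, $\Phi$ is itself a group isomorphism, which completes the proof.

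There is no genuine obstacle here: all the real work --- verifying additivity of the candidate maps on short exact sequences, applying the universal property of the Grothendieck group, exhibiting explicit inverses, and confirming that morphisms in $\mathcal{C}$ do not affect classes in $K((F/G))$ (\cref{obs_GG_comma_nao_depende_GG_C}) --- has been carried out in the lemmas and propositions leading up to this statement. The theorem is essentially a bookkeeping corollary that packages \cref{prop_GG_subcat_plen} and \cref{prop_iso_GG-til} into the form advertised in the introduction.
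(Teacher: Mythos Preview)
Your proposal is correct and matches the paper's own proof exactly: the paper's argument is the single line ``Follows from \cref{prop_GG_subcat_plen} and \cref{prop_iso_GG-til},'' which is precisely the composition of isomorphisms you describe. Your additional remarks about the role of \cref{lem_sub_plen_sist} and the explicit formula $\Phi([(A,B,\alpha)]) = [A] \oplus [B]$ are accurate elaborations but go beyond what the paper records at this point.
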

	
	\begin{proof}
		Follows from \cref{prop_GG_subcat_plen} and \cref{prop_iso_GG-til}.
	\end{proof}
	Hence, we conclude that the Grothendieck group of a comma category splits as the direct sum of the Grothendieck groups of \(\mathcal{A}\) and \(\mathcal{B}\).

	\section{Stability on Comma Categories}
	\label{sect_estab_comma}
	Here we present our main objective: to introduce a notion of stability on comma categories. To this end, we apply concepts from stability theory in abelian categories, making essential use of \cref{prop_exist_HN_func_estab}. The central idea of this section is to determine whether a stability condition in a comma category \((F/G)\) induces stability conditions on \(\mathcal{A}\) and \(\mathcal{B}\), and vice versa. In this section, assume the conditions of \cref{teo_comma_abeliana} hold.
	
	\begin{proposition}
		\label{prop_fun_est-cat_oig_induz_func_estab_comma}
		
		Let \(Z_{\mathcal{A}}: K(\mathcal{A}) \rightarrow \mathbb{C}\) and \(Z_{\mathcal{B}}: K(\mathcal{B}) \rightarrow \mathbb{C}\) be stability functions on \(\mathcal{A}\) and \(\mathcal{B}\), respectively. Then the function \(Z: K((F/G)) \rightarrow \mathbb{C}\) such that
		\[
		Z([A,B, \alpha]) := x Z_{\mathcal{A}}([A]) + y Z_{\mathcal{B}}([B]),
		\]
		where \(x, y \in \mathbb{R}_{>0} \), is a stability function in \((F/G)\). 
	\end{proposition}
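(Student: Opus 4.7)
The plan is to verify in turn that $Z$ is a well-defined additive homomorphism out of $K((F/G))$ and that it satisfies the two axioms of a stability function. For well-definedness, I would appeal to the universal property \cref{teo_hom_idz_funct_adit}: it suffices to check that the assignment $(A,B,\alpha) \mapsto x Z_{\mathcal{A}}([A]) + y Z_{\mathcal{B}}([B])$ on objects is additive across short exact sequences. Given any short exact sequence $0 \to (A',B',\alpha') \to (A,B,\alpha) \to (A'',B'',\alpha'') \to 0$ in $(F/G)$, \cref{seq_exat_curt_comma} delivers short exact sequences $0 \to A' \to A \to A'' \to 0$ in $\mathcal{A}$ and $0 \to B' \to B \to B'' \to 0$ in $\mathcal{B}$. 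Since $Z_{\mathcal{A}}$ and $Z_{\mathcal{B}}$ are homomorphisms on the respective Grothendieck groups, the required additivity on objects follows by linear combination. (Equivalently, one can simply observe that under the isomorphism $K((F/G)) \cong K(\mathcal{A}) \oplus K(\mathcal{B})$ from \cref{teo_GG_comma}, $Z$ is the homomorphism $(a,b) \mapsto x Z_{\mathcal{A}}(a) + y Z_{\mathcal{B}}(b)$.)

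For the first stability axiom, I would fix a non-zero $(A,B,\alpha) \in (F/G)$ and write
\[
\operatorname{Im} Z([(A,B,\alpha)]) = x \operatorname{Im} Z_{\mathcal{A}}([A]) + y \operatorname{Im} Z_{\mathcal{B}}([B]).
\]
Each summand is non-negative: if $A \neq 0$, this is condition (1) of the stability function on $\mathcal{A}$; if $A = 0$, then $[A] = 0$ in $K(\mathcal{A})$ and $Z_{\mathcal{A}}([A]) = 0$. The same dichotomy applies to $B$. Since $x, y > 0$, the sum is non-negative.

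For the second axiom, suppose $\operatorname{Im} Z([(A,B,\alpha)]) = 0$. Strict positivity of $x$ and $y$ combined with the non-negativity just established forces $\operatorname{Im} Z_{\mathcal{A}}([A]) = 0$ and $\operatorname{Im} Z_{\mathcal{B}}([B]) = 0$. The zero object of $(F/G)$ is $(0_{\mathcal{A}}, 0_{\mathcal{B}}, 0)$ (as shown in the proof of \cref{prop_comma_aditiva}), so the assumption $(A,B,\alpha) \neq 0$ forces $A \neq 0$ or $B \neq 0$. Whenever $A \neq 0$ the second stability axiom on $\mathcal{A}$ gives $\operatorname{Re} Z_{\mathcal{A}}([A]) < 0$, and when $A = 0$ the real part is $0$; analogously for $B$. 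Hence at least one of $x \operatorname{Re} Z_{\mathcal{A}}([A])$, $y \operatorname{Re} Z_{\mathcal{B}}([B])$ is strictly negative and the other is non-positive, so $\operatorname{Re} Z([(A,B,\alpha)]) < 0$.

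No step is genuinely hard; the only point that deserves explicit mention is that $x, y > 0$ rules out cancellations between the imaginary parts coming from $\mathcal{A}$ and $\mathcal{B}$ (and likewise for the real parts when the imaginary part vanishes), together with the trivial observation that $[0] = 0$ in any Grothendieck group, which handles the cases where one of $A$ or $B$ is zero.
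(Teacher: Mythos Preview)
Your proof is correct and follows essentially the same route as the paper's: verify that $Z$ is an additive homomorphism, then check the two stability-function axioms by decomposing into the $\mathcal{A}$- and $\mathcal{B}$-parts. In fact you are more careful than the paper in one respect: you explicitly treat the edge case where exactly one of $A$ or $B$ is zero (so that the corresponding $Z_{\mathcal{A}}$- or $Z_{\mathcal{B}}$-axiom does not directly apply), whereas the paper's proof tacitly assumes both are non-zero when concluding $\operatorname{Re}Z_{\mathcal{A}}([A])<0$ and $\operatorname{Re}Z_{\mathcal{B}}([B])<0$.
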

	
	\begin{proof}
		The map \(Z\) is clearly an additive homomorphism.
		
		Let \([A] \in K(\mathcal{A})\) and \([B] \in K(\mathcal{B})\). Since \(Z_{\mathcal{A}}\) and \(Z_{\mathcal{B}}\) are stability functions, we have \(\operatorname{Im}Z_{\mathcal{A}}([A]) \geq 0\) and \(\operatorname{Im}Z_{\mathcal{B}}([B]) \geq 0\), hence
		\[
		\operatorname{Im}Z([A,B,\alpha]) = x \operatorname{Im}Z_{\mathcal{A}}([A]) + y \operatorname{Im}Z_{\mathcal{B}}([B]) \geq 0.
		\]
		If \(\operatorname{Im}Z([A,B,\alpha]) = 0\), since \(x, y > 0\), then necessarily \(\operatorname{Im}Z_{\mathcal{A}}([A]) = \operatorname{Im}Z_{\mathcal{B}}([B]) = 0\). This implies \(\operatorname{Re}Z_{\mathcal{A}}([A]) < 0\) and \(\operatorname{Re}Z_{\mathcal{B}}([B]) < 0\), so
		\[
		\operatorname{Re}Z([A,B,\alpha]) = x \operatorname{Re}Z_{\mathcal{A}}([A]) + y \operatorname{Re}Z_{\mathcal{B}}([B]) < 0.
		\]
		Therefore, \(Z\) satisfies the conditions of a stability function on \((F/G)\).
	\end{proof}
	
	Under the assumptions of the proposition above, we can also define the slope in a comma category:
	\[\mu([A,B,\alpha]) = - \dfrac{\operatorname{Re}Z([A,B,\alpha])}{\operatorname{Im}Z([A,B,\alpha])} = - \dfrac{x\operatorname{Re}Z_{\mathcal{A}}([A]) + y\operatorname{Re}Z_{\mathcal{B}}([B])}{x\operatorname{Im}Z_{\mathcal{A}}([A]) + y\operatorname{Im}Z_{\mathcal{B}}([B])},\]
	when \(\operatorname{Im}Z([A,B,\alpha]) \neq 0\) and infinity otherwise.
	
	\begin{lemma}
		\label{lem_FE_sist_cat_inic_FE}
		Let \(Z: K((F/G)) \rightarrow \mathbb{C}\) be a stability function in \((F/G)\). Then \(Z_{\mathcal{A}}:K(\mathcal{A}) \rightarrow \mathbb{C}\) such that \(Z_{\mathcal{A}}([A]) = Z([(A,0,0)])\) is a stability function in \(\mathcal{A}\) and \(Z_{\mathcal{B}}:K(\mathcal{B}) \rightarrow \mathbb{C}\) such that \(Z_{\mathcal{B}}([B]) = Z([(0,B,0)])\) is a stability function in \(\mathcal{B}\).
	\end{lemma}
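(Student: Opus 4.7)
The plan is to verify directly the two defining axioms of a stability function for $Z_{\mathcal{A}}$, and then observe that the argument for $Z_{\mathcal{B}}$ is completely symmetric (using left exactness of $G$ in place of right exactness of $F$).

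First, I would establish that $Z_{\mathcal{A}}$ is genuinely a well-defined group homomorphism out of $K(\mathcal{A})$. To do this, define $\phi: \operatorname{Obj}(\mathcal{A}) \to \mathbb{C}$ by $\phi(A) = Z([(A,0,0)])$ and appeal to the universal property of the Grothendieck group (\cref{teo_hom_idz_funct_adit}): it suffices to check $\phi$ is additive on short exact sequences. Given a short exact sequence $0 \to A' \to A \to A'' \to 0$ in $\mathcal{A}$, exactly the same construction used in \cref{prop_iso_GG-til}—where right exactness of $F$ is invoked to fit the sequence into a commutative ladder of zero maps—shows that $0 \to (A',0,0) \to (A,0,0) \to (A'',0,0) \to 0$ is short exact in $(F/G)$. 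Since $Z$ is additive on $K((F/G))$, this yields $\phi(A) = \phi(A') + \phi(A'')$, so $\phi$ factors through the desired $Z_{\mathcal{A}}: K(\mathcal{A}) \to \mathbb{C}$.

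Next, I would verify the two positivity axioms. For any non-zero $A \in \mathcal{A}$, the object $(A,0,0)$ is non-zero in $(F/G)$, because the zero object of $(F/G)$ is $(0_{\mathcal{A}},0_{\mathcal{B}},0)$. Since $Z$ is a stability function on $(F/G)$, we obtain $\operatorname{Im} Z_{\mathcal{A}}([A]) = \operatorname{Im} Z([(A,0,0)]) \geq 0$. If moreover $\operatorname{Im} Z_{\mathcal{A}}([A]) = 0$, the second axiom for $Z$ forces $\operatorname{Re} Z([(A,0,0)]) < 0$, i.e.\ $\operatorname{Re} Z_{\mathcal{A}}([A]) < 0$. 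Hence $Z_{\mathcal{A}}$ satisfies both axioms of a stability function on $\mathcal{A}$.

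For $Z_{\mathcal{B}}$, the argument runs in parallel: left exactness of $G$ ensures that a short exact sequence $0 \to B' \to B \to B'' \to 0$ in $\mathcal{B}$ lifts to a short exact sequence $0 \to (0,B',0) \to (0,B,0) \to (0,B'',0) \to 0$ in $(F/G)$, and the positivity axioms transfer verbatim. I do not anticipate any real obstacle here; the only subtle point is making explicit which exactness hypothesis on $F$ or $G$ is needed to guarantee that the candidate short exact sequence in $(F/G)$ is indeed exact, and this has already been worked out in the earlier sections.
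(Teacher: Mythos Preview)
Your proposal is correct and follows essentially the same approach as the paper: both verify the two positivity axioms for $Z_{\mathcal{A}}$ directly by reducing to the corresponding axioms for $Z$ applied to $(A,0,0)$, and then declare the case of $Z_{\mathcal{B}}$ analogous. The paper's proof is terser in that it does not explicitly address well-definedness of $Z_{\mathcal{A}}$ as a homomorphism out of $K(\mathcal{A})$ (this is implicit from the isomorphism $K((F/G)) \cong K(\mathcal{A}) \oplus K(\mathcal{B})$ established in \cref{teo_GG_comma}), nor does it spell out that $(A,0,0)$ is non-zero when $A$ is; your version makes both of these points explicit, which is a welcome addition rather than a divergence.
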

	\begin{proof}
		We will only show that \(Z_{\mathcal{A}}\) is a stability function; for \(Z_{\mathcal{B}}\), it is analogous. For all \(A \in \mathcal{A}\), we have \(\operatorname{Im}Z_{\mathcal{A}}([A]) = \operatorname{Im}Z([(A,0,0)]) \geq 0\). If \(\operatorname{Im}Z_{\mathcal{A}}([A]) = 0\) it follows that \(\operatorname{Im}Z([(A,0,0)]) = 0\), then \(\operatorname{Re}Z([(A,0,0)]) < 0\), i.e., \(\operatorname{Re}Z_{\mathcal{A}}([A]) < 0\). Therefore, \(Z_{\mathcal{A}}([A])\) is a stability function in \(\mathcal{A}\).
	\end{proof}

	\begin{proposition}
		\label{prop_func_estb_comma_induz_fun_est_categ_origi}
		Every stability function \(Z\) on \((F/G)\) induces stability functions \(Z_{\mathcal{A}}\) and \(Z_{\mathcal{B}}\) on \(\mathcal{A}\) and \(\mathcal{B}\), respectively, given by
		\[
		Z_{\mathcal{A}}([A]) := Z([(A,0,0)]) \quad \mbox{ and } \quad Z_{\mathcal{B}}([B]) := Z([(0,B,0)]).
		\]
	\end{proposition}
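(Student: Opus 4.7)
The plan is straightforward, since this proposition is essentially a repackaging of \cref{lem_FE_sist_cat_inic_FE} together with a check that the proposed formulas genuinely define additive homomorphisms out of $K(\mathcal{A})$ and $K(\mathcal{B})$ rather than merely set-theoretic maps on objects. I would split the argument into (i) well-definedness as group homomorphisms, and (ii) verification of the two stability axioms.

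For well-definedness, I would rely on the fact, already implicit in the proof of \cref{prop_iso_GG-til}, that the assignment $A \mapsto (A,0,0)$ sends a short exact sequence $0 \to A' \to A \to A'' \to 0$ in $\mathcal{A}$ to a short exact sequence $0 \to (A',0,0) \to (A,0,0) \to (A'',0,0) \to 0$ in $(F/G)$; the required commutative diagram has zero bottom row and the top row is exact because $F$ is right exact. Composing with $Z$ produces an additive function from $\operatorname{Obj}(\mathcal{A})$ to $\mathbb{C}$, so \cref{teo_hom_idz_funct_adit} yields a unique group homomorphism $Z_{\mathcal{A}}\colon K(\mathcal{A}) \to \mathbb{C}$ with $Z_{\mathcal{A}}([A]) = Z([(A,0,0)])$. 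The same reasoning, using the left exactness of $G$ to handle the bottom row, produces $Z_{\mathcal{B}}\colon K(\mathcal{B}) \to \mathbb{C}$.

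For the stability axioms, I would simply invoke \cref{lem_FE_sist_cat_inic_FE}: for a nonzero $A \in \mathcal{A}$ the object $(A,0,0)$ is nonzero in $(F/G)$, so $\operatorname{Im}Z_{\mathcal{A}}([A]) = \operatorname{Im}Z([(A,0,0)]) \geq 0$, and if this vanishes then the stability of $Z$ forces $\operatorname{Re}Z([(A,0,0)]) < 0$, i.e.\ $\operatorname{Re}Z_{\mathcal{A}}([A]) < 0$. The argument for $Z_{\mathcal{B}}$ is symmetric. There is no real obstacle: the only conceptual point worth highlighting is that the full subcategory inclusions $\tilde{\mathcal{A}}, \tilde{\mathcal{B}} \hookrightarrow (F/G)$ are exact, which is exactly what allows the restriction of $Z$ to descend through the Grothendieck groups of $\mathcal{A}$ and $\mathcal{B}$.
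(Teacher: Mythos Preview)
Your proposal is correct and follows essentially the same route as the paper: both define $Z_{\mathcal{A}}$ and $Z_{\mathcal{B}}$ by the given formulas and invoke \cref{lem_FE_sist_cat_inic_FE} for the stability axioms. The only minor difference is emphasis: you spend more effort than the paper does on checking that $Z_{\mathcal{A}}$ and $Z_{\mathcal{B}}$ are genuinely well-defined group homomorphisms on $K(\mathcal{A})$ and $K(\mathcal{B})$ (via exactness of $A\mapsto(A,0,0)$ and the universal property), whereas the paper instead records the decomposition $Z([(A,B,\alpha)]) = Z([(A,0,0)]) + Z([(0,B,0)])$ coming from \cref{GG_alf} before invoking the lemma.
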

	\begin{proof}
		Let \(Z: K((F/G)) \rightarrow \mathbb{C}\) be a stability function. By \cref{GG_alf} and using the fact that \(Z\) is a group homomorphism, we have
		\begin{eqnarray*}
			Z([(A,B,\alpha)]) = Z([(A,B,0)]) 
			& = &
			Z([(A,0,0)] + [(0,B,0)]) \\
			& = &
			Z([(A,0,0)]) + Z([(0,B,0)]).
		\end{eqnarray*}
		Define \(Z_{\mathcal{A}}([A]) := Z([(A,0,0)])\) and \(Z_{\mathcal{B}}([B]) := Z([(0,B,0)])\). Then by \cref{lem_FE_sist_cat_inic_FE}, these are stability functions on \(\mathcal{A}\) and \(\mathcal{B}\), respectively.
	\end{proof}

	\begin{proposition}
		\label{prop_A_B_noeth_sist_noeth}
		If \(\mathcal{A}\) and \(\mathcal{B}\) are noetherian categories, then \((F/G)\) is noetherian.
	\end{proposition}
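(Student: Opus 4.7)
The plan is to lift an ascending chain of subobjects in $(F/G)$ to ascending chains in $\mathcal{A}$ and $\mathcal{B}$ via the coordinatewise projections, apply noetherianity there, and then pull the stabilization back to $(F/G)$ using \cref{cor_iso_A_B_imp_iso_comma}.

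First I would fix an arbitrary object $(A,B,\alpha)$ in $(F/G)$ and an ascending chain of subobjects
\[
(A_0,B_0,\alpha_0) \subseteq (A_1,B_1,\alpha_1) \subseteq \cdots \subseteq (A_i,B_i,\alpha_i) \subseteq \cdots \subseteq (A,B,\alpha).
\]
Each inclusion is represented by a monomorphism $(f_i,g_i)\colon (A_i,B_i,\alpha_i) \hookrightarrow (A_{i+1},B_{i+1},\alpha_{i+1})$ in $(F/G)$, and the composite into $(A,B,\alpha)$ is again a monomorphism. By \cref{mono_epi_comma}, every $f_i$ is a monomorphism in $\mathcal{A}$ and every $g_i$ is a monomorphism in $\mathcal{B}$, so after composing with the inclusions into $A$ and $B$ we obtain ascending chains
\[
A_0 \subseteq A_1 \subseteq \cdots \subseteq A \quad \text{in } \mathcal{A}, \qquad B_0 \subseteq B_1 \subseteq \cdots \subseteq B \quad \text{in } \mathcal{B}.
\]

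Next, since $\mathcal{A}$ and $\mathcal{B}$ are noetherian, there exist $m_{\mathcal{A}}, m_{\mathcal{B}} \in \mathbb{N}$ such that $f_i$ is an isomorphism in $\mathcal{A}$ for all $i \geq m_{\mathcal{A}}$, and $g_i$ is an isomorphism in $\mathcal{B}$ for all $i \geq m_{\mathcal{B}}$. Setting $m := \max\{m_{\mathcal{A}}, m_{\mathcal{B}}\}$, both $f_i$ and $g_i$ are isomorphisms in their respective categories for all $i \geq m$. By \cref{cor_iso_A_B_imp_iso_comma}, this yields that each $(f_i,g_i)$ is an isomorphism in $(F/G)$ for $i \geq m$, so the original ascending chain in $(F/G)$ stabilizes. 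Since the object $(A,B,\alpha)$ was arbitrary, $(F/G)$ is noetherian.

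The argument is essentially a book-keeping step; the only mild subtlety is making sure that we apply \cref{mono_epi_comma} to the inclusions $(f_i,g_i)$ (rather than, say, worrying about the morphisms $\alpha_i$ on the $\mathcal{C}$-side), and observing that the $\mathcal{C}$-component of the comma triple plays no role in either hypothesis or conclusion. Thus I do not anticipate any real obstacle: the result follows by transporting noetherianity coordinatewise and applying the already-established corollaries on monomorphisms and isomorphisms in comma categories.
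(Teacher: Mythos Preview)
Your proposal is correct and follows essentially the same argument as the paper: project the ascending chain to $\mathcal{A}$ and $\mathcal{B}$ via \cref{mono_epi_comma}, stabilize each coordinate by noetherianity, take the maximum of the two indices, and then apply \cref{cor_iso_A_B_imp_iso_comma} to conclude that the original chain stabilizes. The only cosmetic difference is that you explicitly fix an ambient object $(A,B,\alpha)$, which the paper leaves implicit.
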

	\begin{proof}
		Suppose
		\[ (A_{0}, B_{0},\alpha_{0}) \subseteq (A_{1}, B_{1},\alpha_{1}) \subseteq \cdots \subseteq (A_{i}, B_{i},\alpha_{i}) \subseteq \cdots \ \mbox{ in } \ (F/G). \]
		By \cref{mono_epi_comma}, we have
		\[A_{0} \subseteq A_{1} \subseteq \cdots \subseteq A_{i} \subseteq \cdots  \ \mbox{ in } \ \mathcal{A}.\]
		\[B_{0} \subseteq B_{1} \subseteq \cdots \subseteq B_{i} \subseteq \cdots \ \mbox{ in } \ \mathcal{B}.\]
		Since \(\mathcal{A}\) is noetherian, there exists \(m \in \mathbb{N}\) such that the monomorphisms \(A_{i} \subseteq A_{i+1}\) are isomorphisms for all  \(i \geq m\). Since \(\mathcal{B}\) is noetherian, there exists \(n \in \mathbb{N}\) such that the monomorphisms \(B_{j} \subseteq B_{j+1}\) are isomorphisms for all  \(j \geq n\). Let \(r = \operatorname{max}\{m, n\}\). Thus, the monomorphisms \(A_{k} \subseteq A_{k+1}\) and \(B_{k} \subseteq B_{k+1}\) are isomorphisms for all  \(k \geq r\). By \cref{cor_iso_A_B_imp_iso_comma} we have that the monomorphisms \((A_{k}, B_{k},\alpha_{k}) \subseteq (A_{k+1}, B_{k+1},\alpha_{k+1})\) are isomorphisms for all \(k \geq r\).  Therefore, \((F/G)\) is noetherian.
	\end{proof}
	
	\begin{theorem}
		\label{teo_HN_comma}
		Let \(\mathcal{A}\) and \(\mathcal{B}\) be noetherian categories with \(Z_{\mathcal{A}}\) a stability function in \(\mathcal{A}\) and \(Z_{\mathcal{B}}\) a stability function in \(\mathcal{B}\). If the image of \(\operatorname{Im}Z_{\mathcal{A}} + \operatorname{Im}Z_{\mathcal{B}}\) is discrete in \(\mathbb{R}\), then every object in \((F/G)\) admits an \emph{HN} filtration with respect to the stability function \(Z = Z_{\mathcal{A}} + Z_{\mathcal{B}}\).
	\end{theorem}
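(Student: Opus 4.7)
The plan is to assemble three results established earlier in the paper and then apply \cref{prop_exist_HN_func_estab} directly. No new geometric or categorical construction is needed; the statement is engineered so that the hypotheses of \cite[Proposition 4.10]{macri2017lectures} can be verified term by term.

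First, I would invoke \cref{prop_fun_est-cat_oig_induz_func_estab_comma} with the choice \(x = y = 1\) to conclude that
\[
Z([(A,B,\alpha)]) := Z_{\mathcal{A}}([A]) + Z_{\mathcal{B}}([B])
\]
is a well-defined stability function on \((F/G)\). Next, since \(\mathcal{A}\) and \(\mathcal{B}\) are noetherian by hypothesis, \cref{prop_A_B_noeth_sist_noeth} gives that \((F/G)\) is noetherian. Third, I would compute the imaginary part:
\[
\operatorname{Im}Z([(A,B,\alpha)]) = \operatorname{Im}Z_{\mathcal{A}}([A]) + \operatorname{Im}Z_{\mathcal{B}}([B]),
\]
so the image of \(\operatorname{Im}Z\) on \(K((F/G))\) equals the image of \(\operatorname{Im}Z_{\mathcal{A}} + \operatorname{Im}Z_{\mathcal{B}}\), which is discrete in \(\mathbb{R}\) by hypothesis.

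With these three facts in hand, the hypotheses of \cref{prop_exist_HN_func_estab} are satisfied for the abelian category \((F/G)\) with stability function \(Z\). Applying it yields that every non-zero object of \((F/G)\) admits a Harder-Narasimhan filtration with respect to \(Z\), which is exactly the conclusion.

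There is no real obstacle: the work has been front-loaded into the preparatory propositions (existence and stability of \(Z\), noetherianness of \((F/G)\), and the identification of \(\operatorname{Im}Z\)). The only point that warrants an explicit sentence is the identification of the image of \(\operatorname{Im}Z\) on the full Grothendieck group \(K((F/G))\) with \(\operatorname{Im}(Z_{\mathcal{A}}) + \operatorname{Im}(Z_{\mathcal{B}})\); this uses \cref{obs_GG_comma_nao_depende_GG_C} (or equivalently \cref{teo_GG_comma}) to write every class \([(A,B,\alpha)]\) as \([(A,0,0)] + [(0,B,0)]\), reducing evaluation of \(Z\) to the two component stability functions.
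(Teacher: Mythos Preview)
Your proposal is correct and follows essentially the same approach as the paper: establish that \((F/G)\) is noetherian via \cref{prop_A_B_noeth_sist_noeth}, identify \(\operatorname{Im}Z\) with \(\operatorname{Im}Z_{\mathcal{A}} + \operatorname{Im}Z_{\mathcal{B}}\) so that its image is discrete, and then apply \cref{prop_exist_HN_func_estab}. You are slightly more explicit than the paper in invoking \cref{prop_fun_est-cat_oig_induz_func_estab_comma} to confirm \(Z\) is a stability function and in citing \cref{obs_GG_comma_nao_depende_GG_C} to justify the image identification, but these are welcome clarifications rather than a different route.
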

	\begin{proof}
		By \cref{prop_A_B_noeth_sist_noeth}, we know that \((F/G)\) is noetherian. Moreover, we have  \(\operatorname{Im}Z = \operatorname{Im}Z_{\mathcal{A}} + \operatorname{Im}Z_{\mathcal{B}}\), where the image of \(\operatorname{Im}Z_{\mathcal{A}} + \operatorname{Im}Z_{\mathcal{B}}\) is discrete in \(\mathbb{R}\), so the image of \(\operatorname{Im}Z\) is discrete in \(\mathbb{R}\). Therefore, by \cref{prop_exist_HN_func_estab}, we conclude that every object in \((F/G)\) admits an HN filtration with respect to \(Z\).
	\end{proof}
	
	\begin{remark}
		\label{obs_conj_int_disc_param_racion}
		It can be shown that the set
		\[
		\{(n,m) \in \mathbb{Z}^{2}\,|\, \alpha n +\beta m \mbox{ with }  \alpha,\beta \in \mathbb{R}\}
		\]
		is discrete if, and only if, \(\alpha, \beta \in \mathbb{Q}\).
		As such, if \(\mathcal{A}\) and \(\mathcal{B}\) are noetherian categories and the images \(\operatorname{Im} Z_{\mathcal{A}}(\mathcal{A}), \operatorname{Im} Z_{\mathcal{B}}(\mathcal{B}) \subset \mathbb{Z}\), then the image of \(\alpha \operatorname{Im} Z_{\mathcal{A}} + \beta \operatorname{Im} Z_{\mathcal{B}}\) is discrete for \(\alpha, \beta \in \mathbb{Q}\).
		In particular, by \cref{teo_HN_comma}, every object in \((F/G)\)
		admits an HN filtration with respect to the stability function \(Z = \alpha Z_{\mathcal{A}} + \beta Z_{\mathcal{B}}\) when \(\alpha, \beta \in \mathbb{Q}\).
	\end{remark}
	
	\begin{example}
		Let \(X\) be a smooth projective curve and consider the category of coherent systems (see \cref{exe_sist_coer}). Then \(\mathcal{A} = \operatorname{Vect}_{\mathbb{C}}^{f} = \mathcal{C}\) and \(\mathcal{B} = \operatorname{Coh(X)}\). 
		Let \(Z_{\mathcal{A}}([\Gamma]) = - \alpha\operatorname{dim}(\Gamma)\), with \(\alpha > 0\), and \(Z_{\mathcal{B}}([\mathscr{F}]) = -\operatorname{deg}(\mathscr{F}) + i\operatorname{rk}(\mathscr{F}) \) be stability functions in \(\mathcal{A}\) and \(\mathcal{B}\), respectively. In this case, by \cref{prop_fun_est-cat_oig_induz_func_estab_comma} we have \(Z([(\Gamma, \sigma, \mathscr{F})]) = Z_{\mathcal{A}}(\Gamma) + Z_{\mathcal{B}}(\mathscr{F}) = -(\operatorname{deg}(\mathscr{F})+\alpha \operatorname{dim}(\Gamma)) + i \operatorname{rk}(\mathscr{F})\) a stability function on coherent systems. Moreover, \(\mathcal{A}\) and \(\mathcal{B}\) are noetherian categories and \(\operatorname{Im}Z_{\mathcal{A}}(\Gamma) + \operatorname{Im}Z_{\mathcal{B}}(\mathscr{F}) = \operatorname{rk}(\mathscr{F})\). Since \(\operatorname{rk}(\mathscr{F}) \subset \mathbb{Z}\)  we have \(\operatorname{Im}Z_{\mathcal{A}} + \operatorname{Im}Z_{\mathcal{B}}\) discrete in \(\mathbb{R}\). Therefore, by \cref{teo_HN_comma}, the coherent systems admit an \emph{HN} filtration with respect to the stability function \(Z([(\Gamma, \sigma, \mathscr{F})]) = -(\operatorname{deg}(\mathscr{F})+\alpha \operatorname{dim}(\Gamma)) + i \operatorname{rk}(\mathscr{F})\).
	\end{example}
	
		%

	\begin{proposition}
		\label{prop_comma_noeth_A_B_noeth}
		If \((F/G)\) is noetherian then \(\mathcal{A}\) and \(\mathcal{B}\) are noetherian categories.
	\end{proposition}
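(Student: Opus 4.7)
My plan is to prove the converse of \cref{prop_A_B_noeth_sist_noeth} by lifting ascending chains from $\mathcal{A}$ and $\mathcal{B}$ into $(F/G)$, applying the noetherian hypothesis there, and then pulling the conclusion back.

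First I would take an arbitrary ascending chain of subobjects in $\mathcal{A}$,
\[
A_{0} \subseteq A_{1} \subseteq \cdots \subseteq A_{i} \subseteq \cdots,
\]
and lift it to the chain $(A_{i}, 0, 0)$ in $(F/G)$, with transition morphisms $(\iota_{i}, 1_{0})$ where $\iota_{i}: A_{i} \hookrightarrow A_{i+1}$ is the given monomorphism. Each such pair is a monomorphism in $(F/G)$ by \cref{lem_morfi_sist_mono_cat_ini_mono_sist} (the second component $1_{0}$ is trivially a monomorphism in $\mathcal{B}$), so this is indeed an ascending chain of subobjects of $(A,0,0)$, where $A$ is an upper bound if one wishes (though noetherianness does not require a global upper bound, only stabilization). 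Since $(F/G)$ is noetherian by hypothesis, there exists $m \in \mathbb{N}$ such that $(A_{i}, 0, 0) \subseteq (A_{i+1}, 0, 0)$ is an isomorphism for all $i \geq m$. Applying \cref{cor_iso_comma_imp_iso_A_B} to each such isomorphism shows that $A_{i} \subseteq A_{i+1}$ is an isomorphism in $\mathcal{A}$ for all $i \geq m$, so the original chain stabilizes. Hence $\mathcal{A}$ is noetherian.

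The argument for $\mathcal{B}$ is entirely symmetric: given a chain $B_{0} \subseteq B_{1} \subseteq \cdots$ in $\mathcal{B}$, lift it to $(0, B_{i}, 0)$ in $(F/G)$ with transition morphisms $(1_{0}, \iota_{i})$, invoke noetherianness of $(F/G)$, and apply \cref{cor_iso_comma_imp_iso_A_B} to recover stabilization in $\mathcal{B}$.

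There is essentially no obstacle here; the only subtlety worth checking explicitly is that the triples $(A_{i}, 0, 0)$ (resp.\ $(0, B_{i}, 0)$) are genuine objects of $(F/G)$ and that the lifted morphisms are well defined, which is immediate because $F(0) = 0 = G(0)$ by additivity and the commuting square for the morphism $(\iota_{i}, 1_{0})$ collapses to $0 = 0$. Combining this proposition with \cref{prop_A_B_noeth_sist_noeth} then yields the following characterization, which will be recorded as \cref{cor_cond_nec_suf_comma_noether}: $(F/G)$ is noetherian if and only if both $\mathcal{A}$ and $\mathcal{B}$ are noetherian.
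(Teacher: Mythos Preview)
Your proof is correct and follows essentially the same approach as the paper: lift chains into $(F/G)$ via \cref{lem_morfi_sist_mono_cat_ini_mono_sist}, apply the noetherian hypothesis, and pull back via \cref{cor_iso_comma_imp_iso_A_B}. The only cosmetic difference is that the paper combines an arbitrary chain in $\mathcal{A}$ and one in $\mathcal{B}$ into a single chain $(A_i, B_i, 0)$, whereas you handle the two categories separately using $(A_i, 0, 0)$ and $(0, B_i, 0)$; both variants work for the same reason.
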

	\begin{proof}
		Suppose 
		\[A_{0} \subseteq A_{1} \subseteq \cdots \subseteq A_{i} \subseteq \cdots \ \mbox{ in } \ \mathcal{A}.\]
		\[B_{0} \subseteq B_{1} \subseteq \cdots \subseteq B_{i}\subseteq \cdots \ \mbox{ in } \ \mathcal{B}.\]
		For all \(j \in \mathbb{N}\), the diagram
		\begin{equation*}
			\begin{tikzcd}
				F(A_{j}) \arrow[r] \arrow[d, "0"'] & F(A_{j+1}) \arrow[d, "0"] \\
				G(B_{j}) \arrow[r]                 & G(B_{j+1})               
			\end{tikzcd}
		\end{equation*}
		is commutative, i.e., for all \(j \in \mathbb{N}\) we have \((A_{j}, B_{j}, 0) \rightarrow (A_{j+1}, B_{j+1}, 0)\) a morphism in \((F/G)\). By \cref{lem_morfi_sist_mono_cat_ini_mono_sist}
		follows that 
		\[ (A_{0}, B_{0},0) \subseteq (A_{1}, B_{1},0) \subseteq \cdots \subseteq (A_{i}, B_{i},0) \subseteq \cdots  \ \mbox{ in } \ (F/G). \]
		Since \((F/G)\) is noetherian, there exists \(m \in \mathbb{N}\) such that the monomorphisms \((A_{i},B_{i},0) \subseteq  (A_{i+1},B_{i+1},0)\) are isomorphisms for all \(i \geq m\). By \cref{cor_iso_comma_imp_iso_A_B} we have that the monomorphisms \(A_{i} \subseteq A_{i+1}\) and \(B_{i} \subseteq B_{i+1}\) are isomorphisms for all \(i \geq m\). Therefore \(\mathcal{A}\) and \(\mathcal{B}\) are noetherian categories.
	\end{proof}
	
	\begin{corollary}
		\label{cor_cond_nec_suf_comma_noether}
		\(\mathcal{A}\) and	\(\mathcal{B}\) are noetherian if, and only if, \((F/G)\) is noetherian.
	\end{corollary}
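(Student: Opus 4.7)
The statement is a biconditional, and both directions have already been established in the two immediately preceding propositions, so the plan is simply to package them together. The forward implication ($\mathcal{A}$ and $\mathcal{B}$ noetherian $\Rightarrow$ $(F/G)$ noetherian) is exactly \cref{prop_A_B_noeth_sist_noeth}, and the reverse implication is exactly \cref{prop_comma_noeth_A_B_noeth}. No new construction or calculation is needed, so the proof will be a one-line citation of both results.

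If I were writing this from scratch rather than relying on the propositions, I would organize the argument around the two lemmas \cref{mono_epi_comma} and \cref{lem_morfi_sist_mono_cat_ini_mono_sist}, which translate the subobject relation between the comma category and the component categories in both directions, together with \cref{cor_iso_A_B_imp_iso_comma} and \cref{cor_iso_comma_imp_iso_A_B} to transfer stabilization of the chain. Given an ascending chain in $(F/G)$, pass to the two component chains in $\mathcal{A}$ and $\mathcal{B}$ via \cref{mono_epi_comma}; conversely, given ascending chains in $\mathcal{A}$ and $\mathcal{B}$, lift them to a chain of triples with zero connecting morphism via \cref{lem_morfi_sist_mono_cat_ini_mono_sist}. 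In each case, stabilization on the component side transfers to stabilization on the comma side (or vice versa) by the isomorphism correspondence, with the mild bookkeeping step of taking the maximum of the two stabilization indices when combining $\mathcal{A}$ and $\mathcal{B}$.

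There is no real obstacle here: the corollary is a direct assembly of \cref{prop_A_B_noeth_sist_noeth} and \cref{prop_comma_noeth_A_B_noeth}, and the only judgment call is whether to repeat the argument or just cite. I would cite, as follows.

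\begin{proof}
The forward direction is \cref{prop_A_B_noeth_sist_noeth} and the reverse direction is \cref{prop_comma_noeth_A_B_noeth}.
\end{proof}
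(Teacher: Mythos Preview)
Your proposal is correct and matches the paper's own proof essentially verbatim: the paper writes simply ``Follows from \cref{prop_A_B_noeth_sist_noeth} and \cref{prop_comma_noeth_A_B_noeth}.'' Your additional outline of how one would argue from scratch is accurate and aligns with how those two propositions were themselves proved.
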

	\begin{proof}
		Follows from \cref{prop_A_B_noeth_sist_noeth} and \cref{prop_comma_noeth_A_B_noeth}.
	\end{proof}
	
	\begin{theorem}
		\label{teo_HN_sist_cat_inic_HN}
		Let \((F/G)\) be a noetherian category, and let \(Z: K((F/G)) \rightarrow \mathbb{C}\) be a stability function in \((F/G)\) such that the image of the imaginary part of \(Z\) is discrete in \(\mathbb{R}\). Then each object in \(\mathcal{A}\) admits an \emph{HN} filtration with respect to the stability function \(Z_{\mathcal{A}}([A]) = Z([(A,0,0)]) \) and each object in \(\mathcal{B}\) admits an \emph{HN} filtration with respect to the stability function \(Z_{\mathcal{B}}([B]) = Z([(0,B,0)]) \).
	\end{theorem}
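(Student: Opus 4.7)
The plan is to reduce the statement directly to \cref{prop_exist_HN_func_estab}, applied separately to $\mathcal{A}$ and to $\mathcal{B}$. To do this I need to check three things for each of the two categories: that the category is noetherian, that $Z_{\mathcal{A}}$ (respectively $Z_{\mathcal{B}}$) is a stability function, and that the image of its imaginary part is discrete in $\mathbb{R}$.

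First I would note that, since $(F/G)$ is noetherian by hypothesis, \cref{cor_cond_nec_suf_comma_noether} immediately yields that both $\mathcal{A}$ and $\mathcal{B}$ are noetherian. Next, the fact that $Z_{\mathcal{A}}$ and $Z_{\mathcal{B}}$, defined by $Z_{\mathcal{A}}([A]) = Z([(A,0,0)])$ and $Z_{\mathcal{B}}([B]) = Z([(0,B,0)])$, are stability functions on $\mathcal{A}$ and $\mathcal{B}$ respectively is exactly the content of \cref{lem_FE_sist_cat_inic_FE}, so there is nothing to do there.

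The only point that requires a brief argument is the discreteness of $\operatorname{Im} Z_{\mathcal{A}}$ and $\operatorname{Im} Z_{\mathcal{B}}$. For every $A \in \mathcal{A}$ we have
\[
\operatorname{Im} Z_{\mathcal{A}}([A]) = \operatorname{Im} Z([(A,0,0)]),
\]
so the image of $\operatorname{Im} Z_{\mathcal{A}}$ is contained in the image of $\operatorname{Im} Z$, which is discrete in $\mathbb{R}$ by hypothesis; since any subset of a discrete subset of $\mathbb{R}$ is itself discrete, we conclude that $\operatorname{Im} Z_{\mathcal{A}}$ has discrete image. The same argument with $(0,B,0)$ in place of $(A,0,0)$ handles $Z_{\mathcal{B}}$.

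With these three ingredients in hand, \cref{prop_exist_HN_func_estab} applied to $(\mathcal{A}, Z_{\mathcal{A}})$ and to $(\mathcal{B}, Z_{\mathcal{B}})$ yields the existence of HN filtrations for every non-zero object of $\mathcal{A}$ and of $\mathcal{B}$. No step here is a real obstacle; the proof is essentially a bookkeeping exercise combining \cref{cor_cond_nec_suf_comma_noether}, \cref{lem_FE_sist_cat_inic_FE}, and \cref{prop_exist_HN_func_estab}, with only the elementary observation that discreteness passes to subsets requiring comment.
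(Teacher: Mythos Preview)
Your proof is correct and follows essentially the same approach as the paper: both verify noetherianity of $\mathcal{A}$ and $\mathcal{B}$ via the comma-category result, invoke \cref{lem_FE_sist_cat_inic_FE} for the stability functions, note that the images of $\operatorname{Im}Z_{\mathcal{A}}$ and $\operatorname{Im}Z_{\mathcal{B}}$ sit inside the discrete image of $\operatorname{Im}Z$, and conclude via \cref{prop_exist_HN_func_estab}. The only cosmetic difference is that you cite \cref{cor_cond_nec_suf_comma_noether} where the paper cites the underlying \cref{prop_comma_noeth_A_B_noeth}, and you spell out the subset-of-discrete-is-discrete step that the paper leaves implicit.
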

	\begin{proof}
		By \cref{lem_FE_sist_cat_inic_FE} we have \(Z_{\mathcal{A}}\) and \(Z_\mathcal{B}\) stability functions in \(\mathcal{A}\) and \(\mathcal{B}\), respectively. By \cref{prop_comma_noeth_A_B_noeth}, we have \(\mathcal{A}\) and \(\mathcal{B}\) noetherian categories. Since the image of the imaginary part of \(Z\) is discrete in \(\mathbb{R}\), it follows that the images of \(\operatorname{Im}Z_{\mathcal{A}}\) and \(\operatorname{Im}Z_{\mathcal{B}}\) are discrete in \(\mathbb{R}\). The result follows from \cref{prop_exist_HN_func_estab}.
	\end{proof}
	
	Let \(X\) be a smooth projective curve. Following \cite{newstead2011}, one can define a stability function on coherent systems on \(X\) with respect to a positive real parameter \(\alpha\). More precisely, the {\boldmath{\(\alpha\)}}\textbf{-slope} of a coherent system \((\Gamma, \sigma, \mathscr{F})\) is given by
	\[
	\mu_{\alpha}([(\Gamma, \sigma, \mathscr{F})]) =
	\begin{cases}
		\dfrac{\operatorname{deg}(\mathscr{F})}{\operatorname{rk}(\mathscr{F})} + \alpha \dfrac{\operatorname{dim}(\Gamma)}{\operatorname{rk}(\mathscr{F})}, & \text{if } \operatorname{rk}(\mathscr{F}) \neq 0,\\[1mm]
		+\infty, & \text{if } \operatorname{rk}(\mathscr{F}) = 0.
	\end{cases}
	\]
	Correspondingly, the stability function on coherent systems with respect to \(\alpha > 0\) is defined as
	\[
	Z_{\alpha}([(\Gamma, \sigma, \mathscr{F})]) = -\big(\operatorname{deg}(\mathscr{F}) + \alpha \operatorname{dim}(\Gamma)\big) + i \operatorname{rk}(\mathscr{F}).
	\]
	
	\begin{example}
		Let \(X\) be a smooth projective curve and consider the category of coherent systems. Let \(\mathcal{A} = \operatorname{Vect}_{\mathbb{C}}^{f} = \mathcal{C}\) and \(\mathcal{B} = \operatorname{Coh}(X)\) and \(\alpha > 0\). A stability function on coherent systems is \(Z_{\alpha}([(\Gamma, \sigma, \mathscr{F})]) = -\big(\operatorname{deg}(\mathscr{F}) + \alpha \operatorname{dim}(\Gamma)\big) + i \operatorname{rk}(\mathscr{F})\). Since \(\mathcal{A}\) and \(\mathcal{B}\) are noetherian categories, by \cref{cor_cond_nec_suf_comma_noether} the category of coherent systems is also noetherian. 
		Note that \(\operatorname{Im} Z_{\alpha}([(\Gamma, \sigma, \mathscr{F})]) = \operatorname{rk}(\mathscr{F}) \subset \mathbb{Z}\) is discrete in \(\mathbb{R}\). Hence, by \cref{teo_HN_sist_cat_inic_HN}, each object of \(\mathcal{A}\) admits an \emph{HN} filtration with respect to stability function \(Z_{\mathcal{A}}([\Gamma]) = -\alpha \operatorname{dim}(\Gamma)\), and each object of \(\mathcal{B}\) admits an \emph{HN} filtration with respect to stability function \(Z_{\mathcal{B}}([\mathscr{F}]) = -\operatorname{deg}(\mathscr{F}) + i \operatorname{rk}(\mathscr{F})\).
	\end{example}

	\section{Jordan-Hölder Filtrations on Comma Categories}
	\label{sect_JH_comma}
	We now apply the concept of Jordan–Hölder filtration in the context of comma categories. Specifically, we investigate whether the existence of a Jordan–Hölder filtration for each object in a comma category implies the existence of such filtrations in the original categories, and vice versa. 
	
	\begin{proposition}
		\label{prop_A_B_art_comma_artin}
		If \(\mathcal{A}\) and \(\mathcal{B}\) are artinian, then \((F/G)\) is artinian.
	\end{proposition}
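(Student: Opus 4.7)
The plan is to dualize the argument used for the noetherian case in \cref{prop_A_B_noeth_sist_noeth}, since the two statements are formally dual: we replace ascending chains by descending ones and apply the same structural lemmas, which handle monomorphisms and isomorphisms componentwise in both directions. Concretely, I would start with an arbitrary descending chain
\[
\cdots \subseteq (A_{i+1}, B_{i+1}, \alpha_{i+1}) \subseteq (A_{i}, B_{i}, \alpha_{i}) \subseteq \cdots \subseteq (A_{0}, B_{0}, \alpha_{0})
\]
in $(F/G)$, and use \cref{mono_epi_comma} to extract from it the two descending chains $\cdots \subseteq A_{i+1} \subseteq A_{i} \subseteq \cdots \subseteq A_{0}$ in $\mathcal{A}$ and $\cdots \subseteq B_{i+1} \subseteq B_{i} \subseteq \cdots \subseteq B_{0}$ in $\mathcal{B}$.

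Next, using that $\mathcal{A}$ and $\mathcal{B}$ are artinian, I would choose $m,n \in \mathbb{N}$ such that $A_{i+1} \subseteq A_{i}$ is an isomorphism for all $i \geq m$ and $B_{j+1} \subseteq B_{j}$ is an isomorphism for all $j \geq n$. Setting $r = \max\{m,n\}$, both componentwise monomorphisms become isomorphisms for $k \geq r$, and \cref{cor_iso_A_B_imp_iso_comma} then upgrades these simultaneous componentwise isomorphisms to an isomorphism $(A_{k+1}, B_{k+1}, \alpha_{k+1}) \subseteq (A_{k}, B_{k}, \alpha_{k})$ in $(F/G)$, yielding the descending chain condition in $(F/G)$.

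There is essentially no obstacle to overcome: the content of the argument lies entirely in \cref{mono_epi_comma} and \cref{cor_iso_A_B_imp_iso_comma}, which have already been established without reliance on any exactness direction of $F$ or $G$ (they use only the explicit form of kernels and cokernels in $(F/G)$, which exist under the hypotheses of \cref{teo_comma_abeliana}). Consequently, the proof can be presented very briefly, simply noting that it follows the pattern of \cref{prop_A_B_noeth_sist_noeth} with chains reversed.
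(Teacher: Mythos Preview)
Your proposal is correct and matches the paper's own proof essentially step for step: start from a descending chain in $(F/G)$, apply \cref{mono_epi_comma} to obtain descending chains in $\mathcal{A}$ and $\mathcal{B}$, use the artinian hypotheses to stabilize each at indices $m$ and $n$, set $r=\max\{m,n\}$, and invoke \cref{cor_iso_A_B_imp_iso_comma} to conclude. The paper presents exactly this argument, so there is nothing to add.
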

	\begin{proof}
		Suppose
		\[ \cdots \subseteq (A_{i}, B_{i},\alpha_{i}) \subseteq (A_{i-1}, B_{i-1},\alpha_{i-1}) \subseteq \cdots \subseteq (A_{1}, B_{1},\alpha_{1}) \subseteq (A_{0}, B_{0},\alpha_{0})  \]
		in \( (F/G)\). By \cref{mono_epi_comma}, we have
		\[\cdots \subseteq A_{i} \subseteq A_{i-1} \subseteq \cdots \subseteq A_{1}\subseteq A_{0} \ \mbox{ in } \ \mathcal{A}.\]
		\[\cdots \subseteq B_{i} \subseteq B_{i-1} \subseteq \cdots \subseteq B_{1} \subseteq B_{0} \ \mbox{ in } \ \mathcal{B}.\]
		Since \(\mathcal{A}\) is artinian, there exists \(m \in \mathbb{N}\) such that the monomorphisms \(A_{i+1} \subseteq A_{i}\) are isomorphisms for all  \(i \geq m\). Since \(\mathcal{B}\) is artinian, there exists \(n \in \mathbb{N}\) such that the monomorphisms \(B_{j+1} \subseteq B_{j}\) are isomorphisms for all  \(j \geq n\). Let \(r = \operatorname{max}\{m, n\}\). Thus, the monomorphisms \(A_{k+1} \subseteq A_{k}\) and \(B_{k+1} \subseteq B_{k}\) are isomorphisms for all  \(k \geq r\). By \cref{cor_iso_A_B_imp_iso_comma} we have that the monomorphisms \((A_{k+1}, B_{k+1},\alpha_{k+1}) \subseteq (A_{k}, B_{k},\alpha_{k})\) are isomorphisms for all \(k \geq r\). Therefore, \((F/G)\) is artinian.
	\end{proof}

	\begin{theorem}
		\label{teo_JH_A_B_impl_JH_comma}
		If \( \mathcal{A} \) and \( \mathcal{B} \) are noetherian and artinian categories, then each object in \( (F/G) \) admits a Jordan-Hölder filtration.
	\end{theorem}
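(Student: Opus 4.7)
The statement follows essentially by assembling results already proved in the paper together with the Stacks Project characterization of Jordan--Hölder filtrations (\cref{lem_exts_JH}). The plan is to reduce the existence of Jordan--Hölder filtrations in $(F/G)$ to the conjunction of the noetherian and artinian properties, and then verify each of these properties separately by invoking the corresponding transfer results already available.

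First, I would observe that by \cref{lem_exts_JH}, an object $(A,B,\alpha)$ of $(F/G)$ admits a Jordan--Hölder filtration if and only if it is both noetherian and artinian as an object of the abelian category $(F/G)$. Hence it suffices to show that every object of $(F/G)$ is noetherian and artinian, which is equivalent to saying that the category $(F/G)$ itself is noetherian and artinian.

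For the noetherian part, I would invoke \cref{prop_A_B_noeth_sist_noeth} (or equivalently one direction of \cref{cor_cond_nec_suf_comma_noether}): if $\mathcal{A}$ and $\mathcal{B}$ are noetherian, then $(F/G)$ is noetherian. For the artinian part, I would invoke \cref{prop_A_B_art_comma_artin}: if $\mathcal{A}$ and $\mathcal{B}$ are artinian, then $(F/G)$ is artinian. Combining these, every object of $(F/G)$ is simultaneously noetherian and artinian, and \cref{lem_exts_JH} then produces the desired Jordan--Hölder filtration for each object of $(F/G)$.

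Since the entire argument is an assembly of previously established results, there is essentially no genuine obstacle. The only mild subtlety to flag is that \cref{lem_exts_JH} is formulated at the level of a single object rather than the ambient category, but this presents no difficulty because \cref{prop_A_B_noeth_sist_noeth} and \cref{prop_A_B_art_comma_artin} both yield the noetherian and artinian conditions for every object of $(F/G)$, which is exactly the input required to apply \cref{lem_exts_JH} pointwise.
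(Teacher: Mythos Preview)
Your proposal is correct and matches the paper's proof essentially verbatim: the paper also invokes \cref{prop_A_B_noeth_sist_noeth} and \cref{prop_A_B_art_comma_artin} to conclude that $(F/G)$ is noetherian and artinian, and then applies \cref{lem_exts_JH} to obtain a Jordan--H\"older filtration for every object.
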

	
	\begin{proof}
		By \cref{prop_A_B_noeth_sist_noeth} and \cref{prop_A_B_art_comma_artin}, we know that \( (F/G) \) is both a noetherian and an artinian category. Moreover, by \cref{lem_exts_JH}, every object in \( (F/G) \) admits a Jordan-Hölder filtration.
	\end{proof}
	
	\begin{proposition}
		\label{prop_comma_artin_A_B_artin}
		If \((F/G)\) is an artinian category then \(\mathcal{A}\) and \(\mathcal{B}\) are artinian categories.
	\end{proposition}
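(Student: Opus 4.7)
The plan is to mirror the proof of \cref{prop_comma_noeth_A_B_noeth}, reversing the direction of the chains. Specifically, I would start with arbitrary descending chains in $\mathcal{A}$ and $\mathcal{B}$, lift them to a descending chain in $(F/G)$ using the zero morphism as the third component of each triple, and then push the stabilization back down to the original chains.

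First, assume we are given
\[
\cdots \subseteq A_{i} \subseteq A_{i-1} \subseteq \cdots \subseteq A_{0} \ \text{ in } \ \mathcal{A}, \quad \cdots \subseteq B_{i} \subseteq B_{i-1} \subseteq \cdots \subseteq B_{0} \ \text{ in } \ \mathcal{B}.
\]
For each $j$, the square with zero vertical maps $F(A_j) \to G(B_j)$ and $F(A_{j-1}) \to G(B_{j-1})$ commutes trivially, so the triples $(A_j, B_j, 0)$ together with the componentwise monomorphisms form a descending chain in $(F/G)$; by \cref{lem_morfi_sist_mono_cat_ini_mono_sist}, the induced morphisms $(A_{j}, B_{j}, 0) \to (A_{j-1}, B_{j-1}, 0)$ are indeed monomorphisms in $(F/G)$.

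Next, applying the artinian hypothesis on $(F/G)$, there exists $m \in \mathbb{N}$ such that $(A_{i}, B_{i}, 0) \subseteq (A_{i-1}, B_{i-1}, 0)$ is an isomorphism for all $i \geq m$. By \cref{cor_iso_comma_imp_iso_A_B}, the componentwise monomorphisms $A_{i} \subseteq A_{i-1}$ and $B_{i} \subseteq B_{i-1}$ are isomorphisms for all $i \geq m$, which proves that $\mathcal{A}$ and $\mathcal{B}$ are both artinian.

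Since the argument is entirely formal and dual to the noetherian case already established in \cref{prop_comma_noeth_A_B_noeth}, I do not anticipate any genuine obstacle; the only care needed is to verify that the lift via the zero morphism behaves correctly with respect to inclusions in the reversed direction, which is automatic because the commuting square used to build the chain is the same regardless of whether the chain ascends or descends.
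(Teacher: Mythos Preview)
Your proposal is correct and follows essentially the same approach as the paper's own proof: you lift the descending chains in $\mathcal{A}$ and $\mathcal{B}$ to a descending chain in $(F/G)$ via the zero third component, invoke \cref{lem_morfi_sist_mono_cat_ini_mono_sist} to confirm the inclusions are monomorphisms, apply the artinian hypothesis to stabilize, and then use \cref{cor_iso_comma_imp_iso_A_B} to push the isomorphisms back to the components. The paper's argument is line-for-line the same, differing only in the indexing convention ($i+1 \subseteq i$ rather than $i \subseteq i-1$).
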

	\begin{proof}
		Suppose 
		\[\cdots \subseteq A_{i} \subseteq A_{i-1} \subseteq \cdots \subseteq A_{1}\subseteq A_{0} \ \mbox{ in } \ \mathcal{A}.\]
		\[\cdots \subseteq B_{i} \subseteq B_{i-1} \subseteq \cdots \subseteq B_{1}\subseteq B_{0} \ \mbox{ in } \ \mathcal{B}.\]
		For all \(j \in \mathbb{N}\), the diagram
		\begin{equation*}
			\begin{tikzcd}[sep=large]
				F(A_{j+1}) \arrow[r] \arrow[d, "0"'] & F(A_{j}) \arrow[d, "0"] \\
				G(B_{j+1}) \arrow[r]                 & G(B_{j}) \end{tikzcd}
		\end{equation*}
		is commutative, i.e., for all \(j \in \mathbb{N}\) we have \((A_{j+1}, B_{j+1}, 0) \rightarrow (A_{j}, B_{j}, 0)\) a morphism in \((F/G)\). By \cref{lem_morfi_sist_mono_cat_ini_mono_sist}
		follows that 
		\[ \cdots \subseteq (A_{i}, B_{i},0) \subseteq (A_{i-1}, B_{i-1},0) \subseteq \cdots \subseteq (A_{1}, B_{1},0) \subseteq (A_{0}, B_{0},0)  \ \mbox{ in } \ (F/G). \]
		Since \((F/G)\) is artinian, there exists \(m \in \mathbb{N}\) such that the monomorphisms 
		\[(A_{i+1},B_{i+1},0) \subseteq (A_{i},B_{i},0)\]
		 are isomorphisms in \((F/G)\) for all \(i \geq m\). By \cref{cor_iso_comma_imp_iso_A_B} we have that the monomorphisms \(A_{i+1} \subseteq A_{i}\) are isomorphisms in \(\mathcal{A}\) and the monomorphisms \(B_{i+1} \subseteq B_{i}\) are isomorphisms in \(\mathcal{B}\) for all \(i \geq m\). Therefore, \(\mathcal{A}\) and \(\mathcal{B}\) are artinian categories.
	\end{proof}
	
	\begin{corollary}
		\label{cor_cond_nec_suf_comma_artin}
		\(\mathcal{A}\) and	\(\mathcal{B}\) are artinian if, and only if, \((F/G)\) is artinian.
	\end{corollary}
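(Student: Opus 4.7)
The statement is a biconditional that is already fully prepared by the two immediately preceding propositions, so the proof plan is essentially just an assembly step. My approach will be to observe that the two directions have already been established, and simply cite them.

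For the forward implication ($\mathcal{A}$ and $\mathcal{B}$ artinian $\Rightarrow$ $(F/G)$ artinian), I will invoke \cref{prop_A_B_art_comma_artin}, which takes a descending chain in $(F/G)$, projects it via the two coordinate functors to descending chains in $\mathcal{A}$ and $\mathcal{B}$ using \cref{mono_epi_comma}, stabilizes each separately, and then lifts the eventual isomorphisms back to $(F/G)$ via \cref{cor_iso_A_B_imp_iso_comma}. For the reverse implication, I will invoke \cref{prop_comma_artin_A_B_artin}, which starts with descending chains in $\mathcal{A}$ and $\mathcal{B}$ separately, assembles the chain of triples $(A_i, B_i, 0)$ in $(F/G)$ using \cref{lem_morfi_sist_mono_cat_ini_mono_sist}, applies the artinian hypothesis in $(F/G)$, and reads off the stabilization in each coordinate using \cref{cor_iso_comma_imp_iso_A_B}.

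There is no real obstacle here: the structural lemmas (\cref{mono_epi_comma}, \cref{lem_morfi_sist_mono_cat_ini_mono_sist}, \cref{cor_iso_A_B_imp_iso_comma}, \cref{cor_iso_comma_imp_iso_A_B}) are precisely what is needed to pass chain conditions back and forth between $(F/G)$ and the pair $(\mathcal{A},\mathcal{B})$, and both propositions already do this work. The entire proof is a one-line reference of the form: the corollary follows directly from \cref{prop_A_B_art_comma_artin} and \cref{prop_comma_artin_A_B_artin}. This mirrors the structure of \cref{cor_cond_nec_suf_comma_noether}, which was assembled in the same manner from the noetherian analogues.
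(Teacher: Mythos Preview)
Your proposal is correct and matches the paper's proof exactly: the paper's proof is the single sentence ``Follows from \cref{prop_A_B_art_comma_artin} and \cref{prop_comma_artin_A_B_artin},'' which is precisely the one-line assembly you describe. Your additional commentary unpacking how those two propositions work is accurate and mirrors the noetherian case as you note.
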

	\begin{proof}
		Follows from \cref{prop_A_B_art_comma_artin} and \cref{prop_comma_artin_A_B_artin}.
	\end{proof}
	
	\begin{theorem}
		\label{teo_JH_comma_impl_JH_A_B}
		If \((F/G)\) is noetherian and artinian, then each object of \(\mathcal{A}\) and \(\mathcal{B}\) admits a Jordan-Hölder filtration.
	\end{theorem}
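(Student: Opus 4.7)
The plan is to reduce the statement directly to \cref{lem_exts_JH} via the characterizations of noetherian and artinian properties for comma categories already established in the paper. Since \cref{lem_exts_JH} says that an object admits a Jordan-Hölder filtration precisely when it is both noetherian and artinian, it suffices to transfer these two chain conditions from $(F/G)$ to $\mathcal{A}$ and $\mathcal{B}$ separately.

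First, I would invoke \cref{cor_cond_nec_suf_comma_noether} to conclude from the noetherian hypothesis on $(F/G)$ that both $\mathcal{A}$ and $\mathcal{B}$ are noetherian categories. Then I would invoke \cref{cor_cond_nec_suf_comma_artin} to conclude from the artinian hypothesis on $(F/G)$ that both $\mathcal{A}$ and $\mathcal{B}$ are artinian categories. In particular, every object $A \in \mathcal{A}$ is both noetherian and artinian, and likewise every object $B \in \mathcal{B}$.

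Finally, I would apply \cref{lem_exts_JH} object-by-object: for each $A \in \mathcal{A}$, the equivalence of (1) and (2) in that lemma yields a filtration $0 \subseteq A_0 \subseteq A_1 \subseteq \cdots \subseteq A_n = A$ with simple subquotients, which is by definition a Jordan-Hölder filtration of $A$; the same argument applied to each $B \in \mathcal{B}$ produces a Jordan-Hölder filtration of $B$.

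There is no substantial obstacle here: the main theorem is really a two-line corollary of results already proved in \cref{sect_estab_comma} and earlier in \cref{sect_JH_comma}, combined with the Stacks Project lemma. The only minor point worth flagging explicitly in the writeup is that the noetherian and artinian chain conditions on a category are defined object-wise, so the transfer through \cref{cor_cond_nec_suf_comma_noether} and \cref{cor_cond_nec_suf_comma_artin} gives the two conditions for every object of $\mathcal{A}$ and $\mathcal{B}$, which is exactly what \cref{lem_exts_JH} requires as input.
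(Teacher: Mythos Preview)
Your proposal is correct and matches the paper's proof essentially verbatim: the paper cites \cref{prop_comma_noeth_A_B_noeth} and \cref{prop_comma_artin_A_B_artin} directly (the one-directional propositions) rather than the if-and-only-if corollaries \cref{cor_cond_nec_suf_comma_noether} and \cref{cor_cond_nec_suf_comma_artin}, but the logical content is identical, and both conclude by applying \cref{lem_exts_JH}.
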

	\begin{proof}
		By \cref{prop_comma_noeth_A_B_noeth} and \cref{prop_comma_artin_A_B_artin}, we know that \( \mathcal{A} \) and \( \mathcal{B} \) are noetherian and artinian categories, and by \cref{lem_exts_JH}, we have that each object in \( \mathcal{A} \) and \( \mathcal{B} \) admits a Jordan-Hölder filtration.
	\end{proof}
	
	\begin{corollary}
		\label{cor_cond_nec_suf_comma_JH}
		Each object of \(\mathcal{A}\) and \(\mathcal{B}\) admits a Jordan-Hölder filtration if and only if each object of \((F/G)\) admits a Jordan-Hölder filtration.
	\end{corollary}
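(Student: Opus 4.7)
The plan is to deduce this as an immediate combination of the two theorems already proved in this section, \cref{teo_JH_A_B_impl_JH_comma} and \cref{teo_JH_comma_impl_JH_A_B}, together with the Stacks Project characterization \cref{lem_exts_JH} of Jordan–Hölder filtrations via the noetherian and artinian conditions. The key observation is that the statement ``each object of $\mathcal{X}$ admits a Jordan–Hölder filtration'' is, by \cref{lem_exts_JH}, equivalent to ``$\mathcal{X}$ is noetherian and artinian''; so this corollary is just the conjunction of \cref{cor_cond_nec_suf_comma_noether} and \cref{cor_cond_nec_suf_comma_artin} in disguise.

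For the forward implication, I would assume each object of $\mathcal{A}$ and each object of $\mathcal{B}$ admits a Jordan–Hölder filtration. By \cref{lem_exts_JH}, this means every object of $\mathcal{A}$ and every object of $\mathcal{B}$ is both noetherian and artinian, so $\mathcal{A}$ and $\mathcal{B}$ are noetherian and artinian categories. This is exactly the hypothesis of \cref{teo_JH_A_B_impl_JH_comma}, which then yields a Jordan–Hölder filtration for every object of $(F/G)$.

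For the reverse implication, assume every object of $(F/G)$ admits a Jordan–Hölder filtration. Again by \cref{lem_exts_JH}, this forces $(F/G)$ to be both noetherian and artinian. Then \cref{teo_JH_comma_impl_JH_A_B} (which internally invokes \cref{prop_comma_noeth_A_B_noeth} and \cref{prop_comma_artin_A_B_artin} to transfer these properties back to $\mathcal{A}$ and $\mathcal{B}$, and finally \cref{lem_exts_JH} to produce the filtrations) gives the desired Jordan–Hölder filtrations on each object of $\mathcal{A}$ and $\mathcal{B}$.

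There is no real obstacle here, since both directions have been packaged as theorems earlier in this section; the only subtle point to flag is the implicit use of \cref{lem_exts_JH} to convert the pointwise assumption ``each object admits a Jordan–Hölder filtration'' into the global chain conditions on the categories, but this is exactly the equivalence provided by that lemma. The proof therefore reduces to a single line citing \cref{teo_JH_A_B_impl_JH_comma} and \cref{teo_JH_comma_impl_JH_A_B}.
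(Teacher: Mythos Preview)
Your proposal is correct and matches the paper's approach: the paper's proof is exactly the chain of equivalences you describe, using \cref{lem_exts_JH} on each side and \cref{cor_cond_nec_suf_comma_noether} together with \cref{cor_cond_nec_suf_comma_artin} in the middle. Your alternative phrasing via \cref{teo_JH_A_B_impl_JH_comma} and \cref{teo_JH_comma_impl_JH_A_B} is an equally valid packaging of the same argument.
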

	\begin{proof}
		By \cref{lem_exts_JH}, each object of \(\mathcal{A}\) and \(\mathcal{B}\) admits a Jordan-Hölder filtration if and only if \(\mathcal{A}\) and \(\mathcal{B}\) are noetherian and artinian. By \cref{cor_cond_nec_suf_comma_noether} and \cref{cor_cond_nec_suf_comma_artin}, this holds if and only if \((F/G)\) is noetherian and artinian. Finally, by \cref{lem_exts_JH}, this holds if and only if each object of \((F/G)\) admits a Jordan-Hölder filtration.
	\end{proof}

	\section{Variations on the Definition of Comma Categories}
	\label{sect_Var_Def_Comma}
	Inspired by \cite{le1993systemes}, which defines the concept of \textbf{coherent cosystems}, an adaptation of coherent systems with a contravariant functor, we modify the definition of our comma categories to introduce the analogous concept of \textbf{co-comma categories}. 
	Moreover, we also discuss an alternative approach to introducing contravariant functors into the definition, showing how the permutation between covariant and contravariant functors allows us to obtain three different categories instead of just one.
	
	\begin{definition}[coherent cosystems]
		\label{def_cos_coe}
		Let \(X\) be a projective variety of dimension \(n\) and let \(d\) and \(c\) be two non-negative integers such that \(d + c = n\). A \textbf{coherent cosystem} of dimension \( d = n - c \) on \( X \) is given by a pair \( (\Gamma, \mathscr{F}) \) consisting of a coherent algebraic sheaf \(\mathscr{F}\) of dimension \( d \) on \( X \) and a vector subspace \(\Gamma \subset \operatorname{Ext}^c(\mathscr{F}, \mathcal{O}_X)\). A \textbf{morphism of coherent cosystems} \( (\Gamma, \mathscr{F}) \rightarrow (\Gamma', \mathscr{F}') \) of dimension \( d \) is given by a morphism of algebraic sheaves \( g: \mathscr{F} \rightarrow \mathscr{F}' \) inducing a commutative diagram:
		
		\begin{equation*}
			\begin{tikzcd}
				\Gamma' \arrow[d] \arrow[rr]                                                                                      &  & \Gamma \arrow[d]                                      \\
				{\operatorname{Ext}^{c}(\mathscr{F'},\mathcal{O}_{X})} \arrow[rr, "{\operatorname{Ext}^{c}(g,\mathcal{O}_{X})}"'] &  & {\operatorname{Ext}^{c}(\mathscr{F},\mathcal{O}_{X})}
			\end{tikzcd}
		\end{equation*}
	\end{definition}

	\subsection{Co-comma Categories}
	Let \(\mathcal{A}\), \(\mathcal{B}\), and \(\mathcal{C}\) be categories and \(F: \mathcal{A} \rightarrow \mathcal{C}\) be a covariant functor and \(G: \mathcal{B} \rightarrow \mathcal{C}\) be a contravariant functor. We define \((F \backslash G)\) as a category such that
	\begin{itemize}
		\item Objects of \((F \backslash G)\): are triples \((A,B,\alpha)\), where \(A \in \operatorname{Obj}(\mathcal{A})\), \(B \in \operatorname{Obj}(\mathcal{B})\) and \(\alpha: F(A) \rightarrow G(B) \in \operatorname{Mor}(\mathcal{C})\).
		
		\item Morphisms of \((F\backslash G)\): a morphism between \((A,B,\alpha)\) and \((A',B',\alpha')\), objects of \((F\backslash G)\), is a pair \((f,g)\) with \(f: A' \rightarrow A \in \operatorname{Mor}(\mathcal{A})\) and \(g: B \rightarrow B' \in \operatorname{Mor}(\mathcal{B})\) such that the following diagram commutes:
		\begin{equation*}
			\begin{tikzcd}
				F(A') \arrow[r, "F(f)"] \arrow[d, "\alpha'"'] & F(A) \arrow[d, "\alpha"] \\
				G(B') \arrow[r, "G(g)"']                      & G(B)                    
			\end{tikzcd}
		\end{equation*} 
	\end{itemize}
	The proof that \((F \backslash G)\) is a category, which we call the \textbf{co-comma category}, is analogous to the proof that \((F/G)\) is a category. The identity morphism of \((F \backslash G)\) is the same as the identity morphism of \((F/G)\), i.e., \(1_{(A,B,\alpha)}: (A,B,\alpha) \rightarrow (A,B,\alpha) \) is given by \(1_{(A,B,\alpha)} = (1_A, 1_B)\). The composition between \((f,g): (A,B,\alpha) \rightarrow (A',B',\alpha')\) and \((f',g'): (A',B',\alpha') \rightarrow (A'',B'',\alpha'')\), both morphisms in \((F \backslash G)\), is defined by 
	\[
	(f',g') \circ (f,g) := (f \circ f', g' \circ g).
	\]
	
	\subsubsection{An Example of a Co-comma Category}
	Following \cite{huybrechts2010geometry}, we recall the notion of a framed module:
	\begin{definition}
		\label{def_framed_module}
		Let \(X\) be a smooth projective variety over an algebraically closed field \(k\) of characteristic zero. Fix an ample invertible sheaf \(\mathcal{O}_X(1)\) and a non-trivial coherent sheaf \(\mathscr{F}\). A \textbf{framed module} is a pair \((\mathscr{E}, \varphi)\) consisting of a coherent sheaf \(\mathscr{E}\) and a 
		homomorphism \(\varphi : \mathscr{E} \to \mathscr{F}\). 
		
		A morphism of framed modules \((\mathscr{E},\varphi) \to (\mathscr{E}',\varphi')\) 
		is a morphism of coherent sheaves \(f : \mathscr{E} \to \mathscr{E}'\) such that the following diagram commutes:
		\[
		\begin{tikzcd}
			\mathscr{E} \arrow[rr, "f"] \arrow[rd, "\varphi"'] &   & \mathscr{E}' \arrow[ld, "\varphi'"] \\ & \mathscr{F} &       \end{tikzcd}
		\]
		i.e., \(\varphi' \circ f = \varphi\).
	\end{definition}
	
	We can extend this notion by introducing a vector space. Fix a coherent sheaf \(\mathscr{F}\). 
	A \textit{generalized framed module} is a triple \((V, \mathscr{E}, \varphi)\), where \(V\) is a vector space, \(\mathscr{E}\) is a coherent sheaf, and 
	\(\varphi: V \to \operatorname{Hom}(\mathscr{E}, \mathscr{F})\).
	
	A morphism
	\[
	(g,f): (V, \mathscr{E}, \varphi) \longrightarrow (V', \mathscr{E}', \varphi')
	\]
	consists of a linear map \(g: V' \to V\) and a morphism of coherent sheaves \(f: \mathscr{E} \to \mathscr{E}'\) 
	such that the following diagram commutes:
	\[
	\begin{tikzcd}
		V' \arrow[d, "\varphi'"'] \arrow[rr, "g"]                                                         &  & V \arrow[d, "\varphi"]                        \\
		{\operatorname{Hom}(\mathscr{E}',\mathscr{F})} \arrow[rr, "{\operatorname{Hom}(f,\mathscr{F})}"'] &  & {\operatorname{Hom}(\mathscr{E},\mathscr{F})}
	\end{tikzcd}
	\]
	
	Recall the canonical identifications
	\[
	\operatorname{Hom}(V, \operatorname{Hom}(\mathscr{E}, \mathscr{F})) 
	\cong \operatorname{Hom}(\mathscr{E},\mathscr{F}) \otimes V ^{*} 
	\cong \operatorname{Hom}(\mathscr{E}, V^{*} \otimes \mathscr{F}).
	\]
	In the case \(\dim V = 1\), we have \(V^{*} \otimes \mathscr{F} \cong \mathscr{F}\), so
	\[
	\operatorname{Hom}(V, \operatorname{Hom}(\mathscr{E}, \mathscr{F})) \cong \operatorname{Hom}(\mathscr{E},\mathscr{F}),
	\]
	and the previous diagram reduces to
	\[
	\begin{tikzcd}
		\mathscr{E} \arrow[r, "f"] \arrow[d, "\varphi"'] & \mathscr{E}' \arrow[d, "\varphi'"] \\
		\mathscr{F} \arrow[r, "\operatorname{Id}"']      & \mathscr{F}                       
	\end{tikzcd}
	\]
	i.e., \(\varphi' \circ f = \varphi\). Hence, when \(\dim V = 1\), the generalized notion recovers the original framed module.
	
	Finally, this construction naturally fits into the framework of co-comma categories. Indeed, let \(\mathcal{A} = \mathcal{C} = \operatorname{Vect}\) and \(\mathcal{B} = \operatorname{Coh}(X)\), with \(F = \operatorname{Id}\) and \(G = \operatorname{Hom}(-, \mathscr{F})\) a contravariant functor.
	
	\subsection{Other Constructions}
	
	In addition, we can also define two other categories.
	\begin{example}
		Let \(\mathcal{A}\), \(\mathcal{B}\), and \(\mathcal{C}\) be categories.
		\begin{enumerate}
			\item Let \(F: \mathcal{A} \rightarrow \mathcal{C}\) be a contravariant functor and \(G: \mathcal{B} \rightarrow \mathcal{C}\) be a covariant functor. We define the category \((F|G)\) such that
			\begin{itemize}
				\item Objects of \((F|G)\): are triples \((A,B,\alpha)\), where \(A \in \operatorname{Obj}(\mathcal{A})\), \(B \in \operatorname{Obj}(\mathcal{B})\) and \(\alpha: F(A) \rightarrow G(B) \in \operatorname{Mor}(\mathcal{C})\).
				
				\item Morphisms of \((F|G)\): a morphism between \((A,B,\alpha)\) and \((A',B',\alpha')\) is a pair \((f,g)\) with \(f: A \rightarrow A' \in \operatorname{Mor}(\mathcal{A})\) and \(g: B' \rightarrow B \in \operatorname{Mor}(\mathcal{B})\) such that the following diagram commutes:
				\begin{equation*}
					\begin{tikzcd}
						F(A') \arrow[r, "F(f)"] \arrow[d, "\alpha'"'] & F(A) \arrow[d, "\alpha"] \\
						G(B') \arrow[r, "G(g)"']                      & G(B)                    
					\end{tikzcd}
				\end{equation*} 
			\end{itemize}
			\item Let $F: \mathcal{A} \rightarrow \mathcal{C}$ and $G: \mathcal{B} \rightarrow \mathcal{C}$ be contravariant functors. We define the category \((F-G)\) such that
			\begin{itemize}
				\item Objects of \((F- G)\): are triples $(A,B,\alpha)$, where $A \in \operatorname{Obj}(\mathcal{A})$, $B \in \operatorname{Obj}(\mathcal{B})$ and $\alpha: F(A) \rightarrow G(B) \in \operatorname{Mor}(\mathcal{C})$.
				
				\item Morphisms of \((F- G)\): a morphism between $(A,B,\alpha)$ and $(A',B',\alpha')$ is a pair $(f,g)$ with $f: A \rightarrow A'$, $g: B \rightarrow B'$ such that the following diagram commutes:
				\begin{equation*}
					\begin{tikzcd}
						F(A') \arrow[r, "F(f)"] \arrow[d, "\alpha'"'] & F(A) \arrow[d, "\alpha"] \\
						G(B') \arrow[r, "G(g)"']                      & G(B)                    \end{tikzcd}
				\end{equation*} 
			\end{itemize}
		\end{enumerate}
	\end{example}
	
	\subsection{Conditions for  Co-comma Categories to be Abelian}
	In this section, we present the conditions on the categories and functors involved in the definition of the co-comma category for it to be an abelian category. The proof follows a similar approach to that of a comma category and, for this reason, we omit some unnecessary details.
	\begin{proposition}
		\label{prop_cosist_adit}
		Let \(\mathcal{A}\), \(\mathcal{B}\) and \(\mathcal{C}\) be additive categories, and let \(F: \mathcal{A} \rightarrow \mathcal{C}\) be an additive covariant functor and \(G: \mathcal{B} \rightarrow \mathcal{C}\) be an additive contravariant functor. Then \((F \backslash G)\) is an additive category.
	\end{proposition}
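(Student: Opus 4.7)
The plan is to follow the blueprint of \cref{prop_comma_aditiva}, verifying in turn that $(F\backslash G)$ has $\mathbb{Z}$-linear hom-sets, a zero object, and finite coproducts. The arguments run in parallel to the comma-category case, with the key twist that the contravariance of $G$ reverses direction in $\mathcal{C}$ on the $\mathcal{B}$-side of the square; as a consequence, the coproduct in $(F\backslash G)$ uses the biproduct $A\oplus A'$ of $\mathcal{A}$ viewed \emph{as a product} (with projections) and the biproduct $B\oplus B'$ of $\mathcal{B}$ viewed as a coproduct (with injections), matching the reversed $\mathcal{A}$-component of morphisms in $(F\backslash G)$.

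For linearity, I would define $(f,g)+(f',g'):=(f+_\mathcal{A} f',\,g+_\mathcal{B} g')$ and $n(f,g):=(nf,ng)$, exactly as in \cref{prop_comma_aditiva}. The check that the sum is again a morphism reduces to verifying $\alpha\circ F(f+_{\mathcal{A}} f')=G(g+_{\mathcal{B}} g')\circ \alpha'$; because $F$ is additive covariant and $G$ is additive contravariant, both sides distribute over addition in $\mathcal{C}$, and equality follows by adding the two morphism relations for $(f,g)$ and $(f',g')$. Bilinearity of composition in $(F\backslash G)$ is inherited termwise from $\mathcal{A}$ and $\mathcal{B}$. For the zero object, additive functors (covariant or contravariant) preserve the zero object, so $F(0_{\mathcal{A}})=0_{\mathcal{C}}=G(0_{\mathcal{B}})$, and the triple $(0_{\mathcal{A}},0_{\mathcal{B}},0)$ is seen to be simultaneously initial and terminal by the same argument used for $(F/G)$, noting only that the morphism component $f$ now runs \emph{from} the candidate object back to $0_{\mathcal{A}}$, which is also terminal in the additive category $\mathcal{A}$.

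The main step is finite coproducts. Given $(A,B,\alpha)$ and $(A',B',\alpha')$, I would take $X=A\oplus A'$ with its projections $p_1^A,p_2^A$ in $\mathcal{A}$, and $Y=B\oplus B'$ with its injections $\iota_1^B,\iota_2^B$ in $\mathcal{B}$. Since $F$ is additive covariant, $F(A\oplus A')$ is the biproduct of $F(A)$ and $F(A')$; since $G$ is additive contravariant, it sends the coproduct $B\oplus B'$ to a product in $\mathcal{C}$, so $(G(B\oplus B'),G(\iota_1^B),G(\iota_2^B))$ is a product of $G(B)$ and $G(B')$. The universal property of this product then yields a unique $\gamma\colon F(A\oplus A')\to G(B\oplus B')$ such that
\[
G(\iota_1^B)\circ\gamma=\alpha\circ F(p_1^A),\qquad G(\iota_2^B)\circ\gamma=\alpha'\circ F(p_2^A),
\]
which is exactly the condition that $(p_1^A,\iota_1^B)$ and $(p_2^A,\iota_2^B)$ are morphisms $(A,B,\alpha)\to(A\oplus A',B\oplus B',\gamma)$ and $(A',B',\alpha')\to(A\oplus A',B\oplus B',\gamma)$ in $(F\backslash G)$.

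To verify the universal property of this candidate coproduct, suppose given morphisms $(h_1,k_1)\colon (A,B,\alpha)\to(Z,W,\delta)$ and $(h_2,k_2)\colon (A',B',\alpha')\to(Z,W,\delta)$, where $h_i\colon Z\to A^{(\prime)}$ and $k_i\colon B^{(\prime)}\to W$. The product structure on $A\oplus A'$ in $\mathcal{A}$ yields a unique $\phi\colon Z\to A\oplus A'$ with $p_i^A\circ\phi=h_i$, while the coproduct structure on $B\oplus B'$ in $\mathcal{B}$ yields a unique $\psi\colon B\oplus B'\to W$ with $\psi\circ\iota_i^B=k_i$. The only nontrivial check is that $(\phi,\psi)$ is a morphism of $(F\backslash G)$, i.e.\ $\delta\circ F(\phi)=G(\psi)\circ\gamma$; I expect this to be the main obstacle, and I would establish it by precomposing with each $F(\iota_i^A)$ (equivalently, postcomposing with each $G(\iota_i^B)$) and invoking the uniqueness clause of the product $(G(B\oplus B'),G(\iota_1^B),G(\iota_2^B))$ in $\mathcal{C}$, in direct analogy with the $\rho=\rho'$ argument in \cref{prop_comma_aditiva}. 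Uniqueness of $(\phi,\psi)$ then follows from the uniqueness of $\phi$ and $\psi$ separately, completing the proof that $(F\backslash G)$ is additive.
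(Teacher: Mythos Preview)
Your approach is correct and matches the paper's: you use the product structure on $A\oplus A'$ (with projections) and the coproduct structure on $B\oplus B'$ (with injections), obtain the structure map $\gamma$ from the product universal property of $G(B\oplus B')$ in $\mathcal{C}$, and verify the coproduct universal property via the same $\rho=\rho'$ argument the paper employs. One small slip to fix: the morphism condition for $(\phi,\psi)$ should read $\gamma\circ F(\phi)=G(\psi)\circ\delta$ (both sides are maps $F(Z)\to G(B\oplus B')$), not $\delta\circ F(\phi)=G(\psi)\circ\gamma$, which does not type-check.
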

	\begin{proof}
		The proof that \((F \backslash G)\) is a linear category follows similarly to the proof that \((F/G)\) is a linear category.
		
		The zero object of \((F \backslash G)\) is the same as that of \((F/G)\), given by \((0_{\mathcal{A}}, 0_{\mathcal{B}}, 0)\). The proof is also analogous: using that \(0_{\mathcal{B}}\) is an initial object and \(0_{\mathcal{A}}\) and \(0_{\mathcal{C}}\) are final objects we prove that \((0_{\mathcal{A}}, 0_{\mathcal{B}}, 0)\) is an initial object. Finally, using that \(0_{\mathcal{A}}\) and \(0_{\mathcal{C}}\) are initial objects and \(0_{\mathcal{B}}\) is a final object, it follows that \((0_{\mathcal{A}}, 0_{\mathcal{B}}, 0)\) is a final object.
		
		Finally, we will show the existence of finite coproducts in \((F \backslash G)\).  Let $(A, B, \alpha)$ and $(A', B', \alpha')$ be arbitrary objects in \((F \backslash G)\). Consider \((A \times A',\pi_{1}^{A},\pi_{2}^{A})\) the product of \(A\) and \(A'\) in \(\mathcal{A}\) and \((B \oplus B',i_{1}^{B},i_{2}^{B})\) the coproduct of \(B\) and \(B'\) in \(\mathcal{B}\). Since \(F\) is an additive functor, we have \((F(A \times A'), F(\pi_{1}^{A}), F(\pi_{2}^{A}))\) is the product of \(F(A)\) and \(F(A')\), and since \(G\) is an additive contravariant functor, we have \((G(B \oplus B'), G(i_{1}^{B}), G(i_{2}^{B}))\) is the product of \(G(B)\) and \(G(B')\). Thus, we have the following commutative diagram
		\begin{equation*}
			\begin{tikzcd}
				G(B) & G(B \oplus B') \arrow[l, "G(i_{1}^{B})"'] \arrow[r, "G(i_{2}^{B})"]                                                              & G(B') \\
				& F(A \times A') \arrow[lu, "\alpha \circ F(\pi_{1}^{A})"] \arrow[ru, "\alpha' \circ F(\pi_{2}^{A})"'] \arrow[u, "\beta"', dashed] &      
			\end{tikzcd}
		\end{equation*}
		where \(\beta\) comes from the universal property of the product of \(G(B)\) and \(G(B')\). Thus, \(\beta: F(A \times A') \rightarrow G(B \oplus B')\) is unique such that
		\begin{equation}
			\alpha \circ F(\pi_{1}^{A}) = G(i_{1}^{B}) \circ \beta  \ \mbox{ and } \ \alpha' \circ F(\pi_{2}^{A}) = G(i_{2}^{B}) \circ \beta .
			\label{eq_beta_prod_cosist}
		\end{equation}
		The \cref{eq_beta_prod_cosist} shows that \((\pi_{1}^{A}, i_{1}^{B}) : (A,B, \alpha) \rightarrow (A \times A', B \oplus B',\beta)\) and \((\pi_{2}^{A}, i_{2}^{B}): (A',B', \alpha') \rightarrow (A \times A', B \oplus B',\beta)\) are morphisms in \((F \backslash G)\).
		Let $(X, Y, H)$ be an object of \((F \backslash G)\) and let $(f_{1}, g_{1}): (A, B, \alpha) \rightarrow (X, Y, H)$ and $(f_{2}, g_{2}): (A', B', \alpha') \rightarrow (X, Y, H)$ be arbitrary morphisms in 
		\((F \backslash G)\). We will show that there exists a unique morphism \((\varphi, \psi): (A \times A', B \oplus B', \beta) \rightarrow (X, Y, H)\) in \((F \backslash G)\)  such that the diagram below commutes.
		\begin{equation*}
			\begin{tikzcd}
				&  & {(X,Y,H)}  &  &     \\
				{(A,B,\alpha)} \arrow[rru, "{(f_{1},g_{1})}"] \arrow[rr, "{(\pi_{1}^{A},i_{1}^{B})}"'] &  & {(A \times A', B \oplus B', \beta)} \arrow[u, "{(\varphi,\psi)}"', dashed] &  & {(A',B',\alpha')} \arrow[llu, "{(f_{2},g_{2})}"'] \arrow[ll, "{(\pi_{2}^{A},i_{2}^{B})}"]
			\end{tikzcd}
		\end{equation*}
		Since $(f_{1}, g_{1})$ and $(f_{2}, g_{2})$ are morphisms in \((F \backslash G)\), \(F\) is a covariant functor, \(G\) is a contravariant functor and using the universal property of the product of $A$, $A'$ and of the coproduct of $B$, $B'$, there exist unique morphisms $\varphi: X \rightarrow  A \times A'$ and $\psi: B \oplus B' \rightarrow Y$ such that 
		\begin{equation}
			\begin{cases}
				F(f_{1}) = F(\pi_{1}^{A}) \circ F(\varphi) \\
				F(f_{2}) = F(\pi_{2}^{A}) \circ F(\varphi)
			\end{cases}
			\quad\quad
			\begin{cases}
				G(g_{1}) = G(i_{1}^{B}) \circ G(\psi)  \\ 
				G(g_{2}) = G(i_{2}^{B}) \circ G(\psi) 
			\end{cases}
			\label{eq_Fvarphi_Gpsi_coprod_cosist}
		\end{equation}
		By the universal property of the product $(G(B \oplus B'), G(i_{1}^{B}), G(i_{2}^{B}))$ there exist unique $\rho, \rho': F(X) \rightarrow  G(B \oplus B')$ such that
		\begin{equation}
			\begin{cases}
				\alpha \circ F(f_{1}) = G(i_{1}^{B}) \circ \rho' \\
				\alpha' \circ F(f_{2}) = G(i_{2}^{B}) \circ \rho'
			\end{cases}
			\quad\quad
			\begin{cases}
				G(g_{1}) \circ H = G(i_{1}^{B}) \circ \rho  \\ 
				G(g_{2}) \circ H = G(i_{2}^{B}) \circ \rho 
			\end{cases}
			\label{eq_phi_phi'_cosist}
		\end{equation}
		By \cref{eq_Fvarphi_Gpsi_coprod_cosist} and \cref{eq_beta_prod_cosist} we have
		\begin{equation}
			\label{eq_prov_unic_phi_phi'}
			\begin{cases}
				\alpha \circ F(f_{1}) = G(i_{1}^{B}) \circ (\beta \circ F(\varphi))  \\ 
				\alpha' \circ F(f_{2}) = G(i_{2}^{B}) \circ (\beta \circ F(\varphi))
			\end{cases}
			\quad
			\begin{cases}
				G(g_{1}) \circ H = G(i_{1}^{B}) \circ (G(\psi) \circ H) \\
				G(g_{2}) \circ H = G(i_{2}^{B}) \circ (G(\psi) \circ H) 
			\end{cases}
		\end{equation}
		By the uniqueness of \(\rho\) and \(\rho'\) and by \cref{eq_prov_unic_phi_phi'}, it follows that
		\begin{equation}
			\rho = G(\psi) \circ H  \ \mbox{ and } \ \rho' = \beta \circ F(\varphi).
			\label{eq_nov_carc_phi_phi'_cosist}
		\end{equation}
		Since $(f_{1}, g_{1})$ and $(f_{2}, g_{2})$ are morphisms in \((F \backslash G)\), we have
		\begin{equation}
			\alpha \circ F(f_{1}) = G(g_{1}) \circ H \ \mbox{ and } \ \alpha' \circ F(f_{2}) = G(g_{2}) \circ H.
			\label{eq_f12_g12_mors_cosist}
		\end{equation}
		From the \cref{eq_f12_g12_mors_cosist}, \cref{eq_prov_unic_phi_phi'} and \cref{eq_nov_carc_phi_phi'_cosist}, we obtain 
		\[
		\alpha \circ F(f_{1}) = G(i_{1}^{B}) \circ \rho \ \mbox{ and } \ \alpha' \circ F(f_{2}) = G(i_{2}^{B}) \circ \rho .
		\]
		By the uniqueness of $\rho'$ we have $\rho = \rho'$. Thus, from the equation \cref{eq_nov_carc_phi_phi'_cosist}, it follows that $\beta \circ F(\varphi) = G(\psi) \circ H$, that is, $(\varphi, \psi)$ is a morphism in \((F \backslash G)\). Moreover, by construction, $(\varphi, \psi)$ is unique such that
		$$(\varphi,\psi) \circ (\pi_{1}^{A},i_{1}^{B}) = (f_{1}, g_{1}) \ \mbox{ and } \ (\varphi,\psi) \circ (\pi_{2}^{A},i_{2}^{B}) = (f_{2}, g_{2}).$$
		Thus, \(((A \times A', B \oplus B', \beta),(\pi_{1}^{A},i_{1}^{B}),(\pi_{2}^{A},i_{2}^{B}))\) is the coproduct of \((A,B,\alpha)\) and \((A',B',\alpha')\). This way, \((F \backslash G)\) has finite coproducts. Therefore \((F \backslash G)\) is an additive category.
	\end{proof}
	
	\begin{proposition}
		Let $\mathcal{A}$, $\mathcal{B}$ and $\mathcal{C}$ be abelian categories, $F: \mathcal{A} \rightarrow \mathcal{C}$ a right exact functor and $G: \mathcal{B} \rightarrow \mathcal{C}$ an additive contravariant functor. Then, \((F \backslash G)\) admits kernels.
		\label{prop_cosist_kern}
	\end{proposition}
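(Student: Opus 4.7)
The plan is to mirror the kernel construction from \cref{prop_comma_kernel}, but carefully reversing directions to account for the co-comma structure: a morphism $(p,q)\colon(X,Y,H)\to(A,B,\alpha)$ in $(F\backslash G)$ has $p\colon A\to X$ in $\mathcal{A}$ (reversed) and $q\colon Y\to B$ in $\mathcal{B}$, so by the composition rule of $(F\backslash G)$, the condition $(f,g)\circ(p,q)=0$ unpacks to $p\circ f=0$ in $\mathcal{A}$ and $g\circ q=0$ in $\mathcal{B}$. The universal such pair $(p,q)$ is therefore governed by $\operatorname{Coker}f$ in $\mathcal{A}$ (note: the reversal turns the role of ``kernel'' in $\mathcal{A}$ into a cokernel) and $\operatorname{Ker}g$ in $\mathcal{B}$, which dictates the construction.

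Given a morphism $(f,g)\colon(A,B,\alpha)\to(A',B',\alpha')$ in $(F\backslash G)$, I would form $\operatorname{coker}f\colon A\to\operatorname{Coker}f$ and $\operatorname{ker}g\colon\operatorname{Ker}g\to B$. The commutativity relation $\alpha\circ F(f)=G(g)\circ\alpha'$ and the contravariance of $G$ give
\[
G(\operatorname{ker}g)\circ\alpha\circ F(f)=G(\operatorname{ker}g)\circ G(g)\circ\alpha'=G(g\circ\operatorname{ker}g)\circ\alpha'=0.
\]
By right exactness of $F$, the morphism $F(\operatorname{coker}f)\colon F(A)\to F(\operatorname{Coker}f)$ is a cokernel of $F(f)$; hence there exists a unique $\kappa\colon F(\operatorname{Coker}f)\to G(\operatorname{Ker}g)$ such that $\kappa\circ F(\operatorname{coker}f)=G(\operatorname{ker}g)\circ\alpha$. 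This equation is precisely the statement that $(\operatorname{coker}f,\operatorname{ker}g)\colon(\operatorname{Coker}f,\operatorname{Ker}g,\kappa)\to(A,B,\alpha)$ is a morphism in $(F\backslash G)$, and the composition $(f,g)\circ(\operatorname{coker}f,\operatorname{ker}g)=(\operatorname{coker}f\circ f,\, g\circ\operatorname{ker}g)=(0,0)$ is immediate.

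For the universal property, I would take any $(\tilde p,\tilde q)\colon(X,Y,H)\to(A,B,\alpha)$ with $(f,g)\circ(\tilde p,\tilde q)=0$ and use the universal properties of $\operatorname{Coker}f$ and $\operatorname{Ker}g$ to produce unique $\varphi\colon\operatorname{Coker}f\to X$ and $\psi\colon Y\to\operatorname{Ker}g$ with $\tilde p=\varphi\circ\operatorname{coker}f$ and $\tilde q=\operatorname{ker}g\circ\psi$. To verify $(\varphi,\psi)$ is a morphism in $(F\backslash G)$, that is, $H\circ F(\varphi)=G(\psi)\circ\kappa$, I would substitute the factorizations into the commutativity relation $H\circ F(\tilde p)=G(\tilde q)\circ\alpha$ (noting that $G(\tilde q)=G(\psi)\circ G(\operatorname{ker}g)$ by contravariance), obtaining
\[
H\circ F(\varphi)\circ F(\operatorname{coker}f)=G(\psi)\circ G(\operatorname{ker}g)\circ\alpha=G(\psi)\circ\kappa\circ F(\operatorname{coker}f),
\]
and then cancel the epimorphism $F(\operatorname{coker}f)$. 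Uniqueness of $(\varphi,\psi)$ follows from the uniqueness of $\varphi$ and $\psi$ separately.

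The main obstacle is keeping the variances aligned: because $F$ is covariant but morphisms reverse in the $\mathcal{A}$-slot, while $G$ is contravariant and morphisms go forward in the $\mathcal{B}$-slot, the effective behavior in both slots is ``contravariant with respect to the source category of $(f,g)$.'' This is precisely why right exactness of $F$ (rather than left exactness of $G$, as in \cref{prop_comma_kernel}) is the hypothesis needed to turn $F(\operatorname{coker}f)$ into an honest cokernel in $\mathcal{C}$ so that $\kappa$ can be defined and cancellation at the end of the universal-property check is justified.
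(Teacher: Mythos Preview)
Your proof is correct and follows essentially the same construction and argument as the paper: both build the kernel as $(\operatorname{Coker}f,\operatorname{Ker}g,\kappa)$ using right exactness of $F$ to obtain $\kappa$ from the universal property of $F(\operatorname{coker}f)=\operatorname{coker}F(f)$, and both verify the universal property by factoring through $\operatorname{Coker}f$ and $\operatorname{Ker}g$ and then cancelling the epimorphism $F(\operatorname{coker}f)$. The only cosmetic difference is that the paper phrases the final cancellation via an auxiliary morphism $\Phi$ and a uniqueness argument, whereas you cancel the epimorphism directly; the content is identical.
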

	\begin{proof}
		Let $(f, g): (A, B, \alpha) \rightarrow (A', B', \alpha')$ be an arbitrary morphism in \((F \backslash G)\). Since $f: A' \rightarrow A \in \operatorname{Mor}(\mathcal{A})$ and $g: B \rightarrow B' \in \operatorname{Mor}(\mathcal{B})$, there exist $(\operatorname{Coker} f, \operatorname{coker} f)$ and $(\operatorname{Ker} g, \operatorname{ker} g)$. From the fact that $(f, g)$ is a morphism in \((F \backslash G)\), we have
		\begin{equation}
			\alpha \circ F(f) = G(g) \circ \alpha' .
			\label{eq_prop_kern_alpha_morf}
		\end{equation}
		By \cref{eq_prop_kern_alpha_morf} we have
		\begin{equation*}
			G(\operatorname{ker}g) \circ \alpha \circ  F(f) =  G(\operatorname{ker}g) \circ G(g) \circ \alpha' = G(g \circ \operatorname{ker}g) \circ \alpha' = G(0) \circ\alpha' = 0.
		\end{equation*}
		From the right exactness of \(F\) and the universal property of the cokernel of \(F(f)\), there exists a unique morphism \(\beta: F(\operatorname{Coker} f) \rightarrow G(\operatorname{Ker} g)\) such that the diagram below commutes.
		\begin{equation*}
			\begin{tikzcd}
				F(A') \arrow[r, "F(f)"] & F(A) \arrow[r, "F(\operatorname{coker}f)"] \arrow[d, "G(\operatorname{ker}g) \circ \alpha"'] & F(\operatorname{Coker}f) \arrow[ld, "\beta", dashed] \\
				& G(\operatorname{Ker}g)       &   
			\end{tikzcd}
		\end{equation*}
		i.e.,
		\begin{equation}
			\beta \circ F(\operatorname{coker}f) = G(\operatorname{ker}g) \circ \alpha .
			\label{eq_mor_beta_prop_univ_kern}
		\end{equation}
		We will show that $((\operatorname{Coker} f, \operatorname{Ker} g, \beta), (\operatorname{coker} f, \operatorname{ker} g))$ is the kernel of the morphism $(f, g)$. By \cref{eq_mor_beta_prop_univ_kern}, it follows that $(\operatorname{coker} f, \operatorname{ker} g): (\operatorname{Coker} f, \operatorname{Ker} g, \beta) \rightarrow (A,B,\alpha)$ is a morphism in \((F \backslash G)\). Moreover,
		$$(f,g) \circ (\operatorname{coker}f, \operatorname{ker}g) = (\operatorname{coker}f \circ f, g \circ \operatorname{ker}g) = (0,0) = 0.$$
		Thus, it remains to show the universal property of the kernel. For this, consider $(h, k): (X, Y, H) \rightarrow (A, B, \alpha)$ be a morphism in  \((F \backslash G)\) such that $(f, g) \circ (h, k) = 0$. Since $(h, k)$ is a morphism in  \((F \backslash G)\), we have
		\begin{equation}
			H \circ F(h) = G(k) \circ \alpha.
			\label{eq_morf_hk_cosist}
		\end{equation}
		Since $h \circ f = 0$ and $g \circ k = 0$, by the universal property of the cokernel of $f$ and of the kernel of $g$, there exist unique morphisms $\varphi: \operatorname{Coker} f \rightarrow X$ and $\psi: Y \rightarrow \operatorname{Ker} g$ such that
		\begin{equation}
			h = \varphi \circ \operatorname{coker}f \ \mbox{ and } \ k = \operatorname{ker}g \circ \psi .
			\label{eq_morfi_h_k_prop_univ_kern}
		\end{equation}
		By equation \cref{eq_morfi_h_k_prop_univ_kern}
		\begin{equation}
			F(h) = F(\varphi) \circ F(\operatorname{coker}f)   \ \mbox{ and } \ G(k) = G(\psi) \circ G(\operatorname{ker}g).
			\label{eq_Fh_Gk_morf}
		\end{equation}
		This way by \cref{eq_mor_beta_prop_univ_kern}
		\begin{equation*}
			G(\psi) \circ (G(\operatorname{ker}g) \circ  \alpha) \circ F(f) = G(\psi) \circ (\beta \circ F(\operatorname{coker}f)) \circ F(f) = 0 .
		\end{equation*}
		From the right exactness of \(F\), the universal property of the cokernel of \(F(f)\) and by \cref{eq_Fh_Gk_morf}, there exists a unique \(\Phi: F(\operatorname{Coker}f) \rightarrow G(Y)\) such that
		\begin{equation}
			G(k) \circ \alpha = \Phi \circ F(\operatorname{coker}f).
			\label{eq_Phi_prop_univ_coker_F}
		\end{equation}
		By \cref{eq_mor_beta_prop_univ_kern}, \cref{eq_Fh_Gk_morf}  and \cref{eq_Phi_prop_univ_coker_F}:
		\begin{equation*}
			(G(\psi) \circ \beta ) \circ F(\operatorname{coker}f) = G(\psi) \circ (G(\operatorname{ker}g) \circ \alpha) = G(k) \circ \alpha = \Phi \circ F(\operatorname{coker}f) .
		\end{equation*}
		From the right exactness of \(F\), it follows that
		\begin{equation}
			\Phi = G(\psi) \circ \beta .
			\label{eq_Phi_G_beta}
		\end{equation}
		On the other hand, by \cref{eq_Fh_Gk_morf}, \cref{eq_morf_hk_cosist} and \cref{eq_Phi_prop_univ_coker_F}:
		\begin{equation*}
			H \circ F(\varphi) \circ F(\operatorname{coker}f) = H \circ F(h) = G(k) \circ \alpha = \Phi \circ F(\operatorname{coker}f) .
		\end{equation*}
		Again, using the right exactness of \(F\)
		\begin{equation}
			\Phi = H \circ F(\varphi).
			\label{eq_Phi_F_H}
		\end{equation}
		From equations \cref{eq_Phi_G_beta} and \cref{eq_Phi_F_H}:
		\begin{equation*}
			H \circ F(\varphi) = G(\psi) \circ \beta,
		\end{equation*}
		i.e., \((\varphi,\psi): (X, Y, H) \rightarrow (\operatorname{Coker} f, \operatorname{Ker} g, \beta)\) is a morphism in \((F \backslash G)\). Moreover, by \cref{eq_morfi_h_k_prop_univ_kern}
		\begin{equation}
			(\operatorname{coker}f, \operatorname{ker}g) \circ (\varphi, \psi) = (\varphi\circ \operatorname{coker}f, \operatorname{ker}g \circ \psi) = (h,k).
			\label{eq_psi_phi_sat_prop_ker}
		\end{equation}
		Finally, by construction, $(\varphi, \psi)$ is unique with the property of equation \cref{eq_psi_phi_sat_prop_ker}. Therefore, \((F \backslash G)\) admits kernels.
	\end{proof}
	
	\begin{proposition}
		Let $\mathcal{A}$, $\mathcal{B}$ and $\mathcal{C}$ be abelian categories, $F: \mathcal{A} \rightarrow \mathcal{C}$ an additive functor and $G: \mathcal{B} \rightarrow \mathcal{C}$ a right exact contravariant functor. Then, \((F \backslash G)\) admits cokernels.
		\label{prop_cosist_coker}
	\end{proposition}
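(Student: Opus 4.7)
The plan is to mirror the construction in \cref{prop_cosist_kern}, swapping the roles of kernel and cokernel to accommodate both the contravariance of $G$ and the direction-reversal of first components in $(F\backslash G)$. Given a morphism $(f,g): (A,B,\alpha) \to (A',B',\alpha')$ in $(F\backslash G)$, with $f: A' \to A$ in $\mathcal{A}$ and $g: B \to B'$ in $\mathcal{B}$, I will propose as cokernel the pair
\[
\bigl((\operatorname{Ker} f,\operatorname{Coker} g,\gamma),\ (\operatorname{ker} f,\operatorname{coker} g)\bigr),
\]
where $\gamma : F(\operatorname{Ker} f) \to G(\operatorname{Coker} g)$ is built as follows. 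Because $G$ is a right exact contravariant functor, applying it to $B \xrightarrow{g} B' \xrightarrow{\operatorname{coker} g} \operatorname{Coker} g \to 0$ yields a left exact sequence $0 \to G(\operatorname{Coker} g) \xrightarrow{G(\operatorname{coker} g)} G(B') \xrightarrow{G(g)} G(B)$, so $G(\operatorname{coker} g) = \operatorname{ker} G(g)$. The morphism relation for $(f,g)$ together with $f \circ \operatorname{ker} f = 0$ gives $G(g) \circ \alpha' \circ F(\operatorname{ker} f) = \alpha \circ F(f \circ \operatorname{ker} f) = 0$, so the universal property of this kernel produces a unique $\gamma$ with $G(\operatorname{coker} g) \circ \gamma = \alpha' \circ F(\operatorname{ker} f)$, which is exactly the condition that makes $(\operatorname{ker} f,\operatorname{coker} g)$ a morphism from $(A',B',\alpha')$ to $(\operatorname{Ker} f,\operatorname{Coker} g,\gamma)$ in $(F\backslash G)$. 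Using the composition rule $(f',g')\circ(f,g) = (f\circ f', g'\circ g)$, I find $(\operatorname{ker} f,\operatorname{coker} g) \circ (f,g) = (f \circ \operatorname{ker} f,\operatorname{coker} g \circ g) = (0,0)$, so the candidate is annihilated by $(f,g)$.

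Next I will verify the universal property. Let $(h,k): (A',B',\alpha') \to (X,Y,H)$ be a morphism in $(F\backslash G)$ with $(h,k) \circ (f,g) = 0$; so $f \circ h = 0$ in $\mathcal{A}$ and $k \circ g = 0$ in $\mathcal{B}$. The universal properties of $\operatorname{Ker} f$ and $\operatorname{Coker} g$ produce unique $\varphi : X \to \operatorname{Ker} f$ and $\psi : \operatorname{Coker} g \to Y$ with $h = \operatorname{ker} f \circ \varphi$ and $k = \psi \circ \operatorname{coker} g$. To see that $(\varphi,\psi): (\operatorname{Ker} f,\operatorname{Coker} g,\gamma) \to (X,Y,H)$ is a morphism in $(F\backslash G)$, I must check $\gamma \circ F(\varphi) = G(\psi) \circ H$; composing both sides on the left with the monomorphism $G(\operatorname{coker} g)$, the left side becomes $\alpha' \circ F(\operatorname{ker} f \circ \varphi) = \alpha' \circ F(h)$ and the right side becomes $G(\psi \circ \operatorname{coker} g) \circ H = G(k) \circ H$, and these coincide because $(h,k)$ is a morphism in $(F\backslash G)$. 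Computing $(\varphi,\psi) \circ (\operatorname{ker} f,\operatorname{coker} g) = (\operatorname{ker} f \circ \varphi,\psi \circ \operatorname{coker} g) = (h,k)$ confirms the factorization, and uniqueness follows from uniqueness of $\varphi$ and $\psi$.

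The main subtlety---rather than a genuine obstacle---is keeping consistent track of directions: the first components of morphisms in $(F\backslash G)$ run backwards and $G$ reverses arrows, so cokernels in $(F\backslash G)$ should be detected by $\operatorname{Ker}$ on the $\mathcal{A}$-side and $\operatorname{Coker}$ on the $\mathcal{B}$-side. The single functorial input needed is right exactness of the contravariant $G$, which ensures $G(\operatorname{coker} g)$ is both the kernel of $G(g)$ and, in particular, a monomorphism---this powers both the existence of $\gamma$ and the cancellation step that verifies the mediating morphism $(\varphi,\psi)$ respects the relation defining morphisms in $(F\backslash G)$.
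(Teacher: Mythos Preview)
Your proof is correct and follows exactly the approach sketched in the paper: the paper merely states that the argument parallels \cref{prop_cosist_kern}, records the cokernel as $((\operatorname{Ker} f,\operatorname{Coker} g,\gamma),(\operatorname{ker} f,\operatorname{coker} g))$, and displays the diagram characterizing $\gamma$ via the universal property of $\operatorname{ker} G(g)=G(\operatorname{coker} g)$. Your write-up supplies the details the paper omits, including the verification of the universal property using that $G(\operatorname{coker} g)$ is a monomorphism.
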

	\begin{proof}
		The proof is analogous to the proof of the \cref{prop_cosist_kern}. For any morphism $(f,g): (A,B,\alpha) \rightarrow (A',B',\alpha')$ in \((F \backslash G)\), the cokernel of \((f,g)\) is given by 
		\[
		(\operatorname{Coker}(f,g),\operatorname{coker}(f,g)) =	((\operatorname{Ker} f, \operatorname{Coker} g, \gamma), (\operatorname{ker} f, \operatorname{coker} g)),
		\]
		where $\gamma: F(\operatorname{Ker} f) \rightarrow G(\operatorname{Coker} g)$ is the unique morphism making the diagram below commute.
		\begin{equation*}
			\begin{tikzcd}
				G(\operatorname{Coker}g) \arrow[r, "G(\operatorname{coker}g)"] & G(B') \arrow[r, "G(g)"]  & G(B) \\  & F(\operatorname{Ker}f) \arrow[u, "\alpha' \circ F(\operatorname{ker}f)"'] \arrow[lu, "\gamma", dashed] &   \end{tikzcd}
		\end{equation*}
	\end{proof}
	
	\begin{theorem}
		\label{teo_cosist_abeli}
		Let $\mathcal{A}$, $\mathcal{B}$ and $\mathcal{C}$ be abelian categories, $F: \mathcal{A} \rightarrow \mathcal{C}$ a right exact functor and $G: \mathcal{B} \rightarrow \mathcal{C}$ a right exact contravariant functor. Then, \((F \backslash G)\) is an abelian category.
	\end{theorem}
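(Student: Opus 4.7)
The plan is to mirror the structure of \cref{teo_comma_abeliana} verbatim, taking advantage of the three preceding propositions for co-comma categories. Under the hypotheses, \(F\) right exact implies \(F\) additive, and \(G\) right exact contravariant implies \(G\) additive contravariant, so \cref{prop_cosist_adit} gives that \((F\backslash G)\) is additive; \cref{prop_cosist_kern} (applicable since \(F\) is right exact and \(G\) is additive contravariant) ensures kernels exist; and \cref{prop_cosist_coker} (applicable since \(F\) is additive and \(G\) is right exact contravariant) ensures cokernels exist. What remains, therefore, is the last axiom: for every morphism \((f,g): (A,B,\alpha) \to (A',B',\alpha')\) in \((F\backslash G)\), the canonical morphism from its coimage to its image is an isomorphism.

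Next I would unwind the image and coimage using the explicit formulas already at hand. From \cref{prop_cosist_coker}, \(\operatorname{coker}(f,g) = (\operatorname{ker} f, \operatorname{coker} g)\), so applying \cref{prop_cosist_kern} gives
\[
\operatorname{Im}(f,g) = \operatorname{Ker}(\operatorname{coker}(f,g)) = (\operatorname{Coker}(\operatorname{ker} f),\, \operatorname{Ker}(\operatorname{coker} g),\, \Delta) = (\operatorname{Coim} f,\, \operatorname{Im} g,\, \Delta),
\]
while dually
\[
\operatorname{Coim}(f,g) = \operatorname{Coker}(\operatorname{ker}(f,g)) = (\operatorname{Ker}(\operatorname{coker} f),\, \operatorname{Coker}(\operatorname{ker} g),\, \Gamma) = (\operatorname{Im} f,\, \operatorname{Coim} g,\, \Gamma),
\]
where \(\Delta\) arises from the right exactness of \(F\) applied to the cokernel of \(F(\operatorname{ker} f)\), and \(\Gamma\) arises from the right exactness of \(G\) applied (contravariantly) to the cokernel of \(G(\operatorname{coker} g)\). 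Factoring \(f = \beta \circ \overline{f} \circ \theta\) in \(\mathcal{A}\) and \(g = \widetilde{\beta} \circ \overline{g} \circ \widetilde{\theta}\) in \(\mathcal{B}\) in the standard way, I would then propose that the induced morphism of \((f,g)\) is \((\overline{f}, \overline{g})\), with \(\overline{f} : \operatorname{Coim} f \to \operatorname{Im} f\) sitting in the first (reversed) slot and \(\overline{g}: \operatorname{Coim} g \to \operatorname{Im} g\) in the second.

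The step that requires actual work is verifying that \((\overline{f}, \overline{g})\) is a morphism of the co-comma category, i.e., that the appropriate square involving \(\Gamma\) and \(\Delta\) commutes. I would deduce this exactly as in \cref{teo_comma_abeliana}, by pre- and post-composing with the epimorphism \(\theta\) and monomorphism \(\widetilde{\beta}\) (or rather, the corresponding morphisms after applying the functors), using the commutativity of the squares attached to \(\operatorname{ker}(f,g)\) and \(\operatorname{coker}(f,g)\), and then cancelling. The cancellation on one side uses that \(F(\theta)\) remains an epimorphism because \(F\) is right exact, and on the other side uses that \(G(\widetilde{\beta})\) becomes an epimorphism because \(G\) is right exact contravariant, turning the monomorphism \(\widetilde{\beta}\) into an epimorphism. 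This is the main obstacle: I must be careful to track how contravariance swaps monomorphisms and epimorphisms so that the cancellation properties come out right.

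Once the commutativity is established, the conclusion is essentially free. Because \(\mathcal{A}\) and \(\mathcal{B}\) are abelian, \(\overline{f}\) and \(\overline{g}\) are already isomorphisms, so their inverses exist; a short computation, analogous to the final display of the proof of \cref{teo_comma_abeliana} but with the direction of the first component reversed, confirms that \((\overline{f}^{-1}, \overline{g}^{-1})\) is itself a morphism in \((F\backslash G)\) and is two-sided inverse to \((\overline{f}, \overline{g})\). Hence the induced morphism is an isomorphism and \((F\backslash G)\) satisfies the remaining abelian axiom, completing the proof.
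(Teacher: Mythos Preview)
Your overall strategy is exactly the paper's: invoke \cref{prop_cosist_adit}, \cref{prop_cosist_kern}, \cref{prop_cosist_coker}, compute
\[
\operatorname{Im}(f,g)=(\operatorname{Coim} f,\operatorname{Im} g,\Delta),\qquad
\operatorname{Coim}(f,g)=(\operatorname{Im} f,\operatorname{Coim} g,\Gamma),
\]
identify the induced morphism as \((\overline f,\overline g)\), verify the relevant square commutes by two cancellations, and then invert componentwise. That part is fine.

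The one genuine slip is in the second cancellation. You write that it ``uses that \(G(\widetilde\beta)\) becomes an epimorphism because \(G\) is right exact contravariant, turning the monomorphism \(\widetilde\beta\) into an epimorphism.'' That is backward. The exactness hypothesis on \(G\) used throughout this section (see the construction of \(\gamma\) in \cref{prop_cosist_coker}) is that \(G\) carries cokernels to kernels, i.e.\ epimorphisms to monomorphisms; it says nothing about monomorphisms becoming epimorphisms. Consequently \(G(\widetilde\beta)\) is not known to be cancellable, and your proposed cancellation would not go through. In the actual argument the morphism that appears on both sides after expanding is \(G(\widetilde\theta)\), where \(\widetilde\theta=\operatorname{coker}(\ker g)\) is an epimorphism; the hypothesis on \(G\) then makes \(G(\widetilde\theta)\) a \emph{monomorphism}, and one cancels it from the left. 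So the two cancellations are: \(F(\theta)\) is an epimorphism (cancel on the right), and \(G(\widetilde\theta)\) is a monomorphism (cancel on the left). Once you make that correction, everything you wrote matches the paper's proof.
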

	\begin{proof}
		By \cref{prop_cosist_adit}, \cref{prop_cosist_kern} and \cref{prop_cosist_coker}, it suffices to show that, for every morphism in \((F \backslash G)\), its induced morphism is an isomorphism. Let \((f,g): (A,B,\alpha) \rightarrow (A',B',\alpha')\) be any morphism in \((F \backslash G)\). By the basic property of categories with kernels and cokernels, there exist unique morphisms \(\varphi: A' \rightarrow \operatorname{Im}f\), \( \overline{f}: \operatorname{Coim}f \rightarrow \operatorname{Im}f\) in \(\mathcal{A}\) and \(\psi: B \rightarrow \operatorname{Im}g\), \( \overline{g}: \operatorname{Coim}g \rightarrow \operatorname{Im}g\) in \(\mathcal{B}\) such that
		\begin{equation}
			\label{eq_morf_ind_cat_A_co_comma}
			f = \beta \circ \varphi \ \mbox{ and } \ \varphi = \overline{f} \circ \theta.
		\end{equation}
		\begin{equation}
			\label{eq_morf_ind_cat_B_co_comma}
			g = \Tilde{\beta} \circ \psi \ \mbox{ and } \ \psi = \overline{g} \circ \Tilde{\theta}.
		\end{equation}
		where \(\theta = \operatorname{coker}(\operatorname{ker}f)\), \(\beta = \operatorname{ker}(\operatorname{coker}f)\), \(\Tilde{\theta} = \operatorname{coker}(\operatorname{ker}g)\) and \(\Tilde{\beta} = \operatorname{ker}(\operatorname{coker}g)\).
		Note that
		\begin{eqnarray*}
			\operatorname{Im}(f,g) = \operatorname{Ker}(\operatorname{coker}(f,g)) 
			&=& \operatorname{Ker}(\operatorname{ker}f, \operatorname{coker}g) \\
			& = & (\operatorname{Coker}(\operatorname{ker}f),\operatorname{Ker}(\operatorname{coker}g),\Delta) \\
			&=&
			(\operatorname{Coim}f, \operatorname{Im}g, \Delta) ,
		\end{eqnarray*}
		where \(\Delta: F(\operatorname{Coim}f) \rightarrow G(\operatorname{Im}g)\) arises from the universal property of the cokernel of \(F(\operatorname{ker}f)\).
		\begin{eqnarray*}
			\operatorname{Coim}(f,g) = \operatorname{Coker}(\operatorname{ker}(f,g)) 
			&=&
			\operatorname{Coker}(\operatorname{coker}f, \operatorname{ker}g) \\
			& = & (\operatorname{Ker}(\operatorname{coker}f),\operatorname{Coker}(\operatorname{ker}g),\Gamma) \\
			& = & 
			(\operatorname{Im}f, \operatorname{Coim}g, \Gamma),
		\end{eqnarray*}
		where \(\Gamma: F(\operatorname{Im}f) \rightarrow G(\operatorname{Coim}g)\) arises from the universal property of the kernel of \(G(\operatorname{coker}g)\). First we will prove that \((\beta \circ \overline{f}, \psi): (A,B,\alpha) \rightarrow (\operatorname{Coim}f, \operatorname{Im}g, \Delta)\) is a morphism in \((F \backslash G)\). Note that
		\[
		\operatorname{coker}(\operatorname{ker}(f,g)) = \operatorname{coker}(\operatorname{coker}f, \operatorname{ker}g) = (\operatorname{ker}(\operatorname{coker}f), \operatorname{coker}(\operatorname{ker}g)) = (\beta, \Tilde{\theta})
		\]
		\[
		\operatorname{ker}(\operatorname{coker}(f,g)) = \operatorname{ker}(\operatorname{ker}f, \operatorname{coker}g) = (\operatorname{coker}(\operatorname{ker}f), \operatorname{ker}(\operatorname{coker}g)) = (\theta, \Tilde{\beta})
		\]
		are morphisms in \((F \backslash G)\). Then the following diagram commutes:
		\begin{equation*}
			\begin{tikzcd}
				F(A') \arrow[r, "F(\theta)"] \arrow[d, "\alpha'"'] & F(\operatorname{Coim}f) \arrow[d, "\Delta"] &  & F(\operatorname{Im}f) \arrow[r, "F(\beta)"] \arrow[d, "\Gamma"'] & F(A) \arrow[d, "\alpha"] \\
				G(B') \arrow[r, "G(\Tilde{\beta})"']               & G(\operatorname{Im}g)                       &  & G(\operatorname{Coim}g) \arrow[r, "G(\Tilde{\theta})"']          & G(B)                    
			\end{tikzcd}
		\end{equation*}
		From the diagram e using that \((f,g)\) is a morphism in \((F \backslash G)\) we have
		\begin{equation}
			\label{eq_morf_fg_co_comma}
			\alpha \circ F(f) = G(g) \circ \alpha'.
		\end{equation}
		\begin{equation}
			\label{eq_morf_coim_co_comma}
			\Delta \circ F(\theta) = G(\Tilde{\beta}) \circ \alpha'.
		\end{equation}
		\begin{equation}
			\label{eq_morf_im_co_comma}
			\alpha \circ F(\beta) = G(\Tilde{\theta}) \circ \Gamma.
		\end{equation}
		From \cref{eq_morf_ind_cat_A_co_comma}, we have
		\[
		\alpha \circ F(f) = \alpha \circ F(\beta \circ \varphi) = \alpha \circ F(\beta) \circ F(\overline{f} \circ \theta) = \alpha \circ F(\beta \circ \overline{f}) \circ F(\theta).
		\]
		By \cref{eq_morf_ind_cat_B_co_comma} and \cref{eq_morf_coim_co_comma}:
		\[
		G(g) \circ \alpha' = G(\Tilde{\beta} \circ \psi) \circ \alpha' = G(\psi) \circ  G(\Tilde{\beta}) \circ \alpha' = G(\psi) \circ \Delta \circ F(\theta).
		\]
		From \cref{eq_morf_fg_co_comma}, it follows that
		\[
		\alpha \circ F(\beta \circ \overline{f}) \circ F(\theta) = G(\psi) \circ \Delta \circ F(\theta).
		\]
		Since \(\theta\) is an epimorphism and \(F\) is a right exact functor, we have:
		\begin{equation}
			\label{eq_morf_varphi_psi_co_comma}
			\alpha \circ F(\beta \circ \overline{f}) =  G(\psi) \circ \Delta,
		\end{equation}
		thus \((\beta \circ \overline{f}, \psi)\) is a morphism in \((F \backslash G)\). We will prove that \((\overline{f}, \overline{g}) : \operatorname{Coim}(f,g) \rightarrow \operatorname{Im}(f,g)\) is a morphism in \((F \backslash G)\). From \cref{eq_morf_ind_cat_B_co_comma}:
		\[
		G(\psi) \circ \Delta = G(\overline{g} \circ \Tilde{\theta}) \circ \Delta = G(\Tilde{\theta}) \circ G(\overline{g}) \circ \Delta.
		\]
		By \cref{eq_morf_im_co_comma}:
		\[
		\alpha \circ F(\beta \circ \overline{f}) = \alpha \circ F(\beta) \circ F(\overline{f}) = G(\Tilde{\theta}) \circ\Gamma \circ F(\overline{f}).
		\]
		From \cref{eq_morf_varphi_psi_co_comma}, we have
		\[
		G(\Tilde{\theta}) \circ G(\overline{g}) \circ \Delta =  G(\Tilde{\theta}) \circ \Gamma \circ F(\overline{f}).
		\]
		Since \(\Tilde{\theta}\) is an epimorphism and \(G\) is a left exact contravariant functor we conclude
		\begin{equation}
			\label{eq_mor_ind_co_comma_cand}
			G(\overline{g}) \circ \Delta =  \Gamma \circ F(\overline{f}),
		\end{equation}
		then \((\overline{f},\overline{g})\) is a morphism in \((F \backslash G)\). Moreover
		\[
		(f,g) = (\theta, \Tilde{\beta}) \circ (\overline{f}, \overline{g}) \circ (\beta, \Tilde{\theta}) = \operatorname{ker}(\operatorname{coker}(f,g)) \circ (\overline{f}, \overline{g}) \circ
		\operatorname{coker}(\operatorname{ker}(f,g)).
		\]
		Thus \((\overline{f}, \overline{g})\) is the induced morphism of \((f,g)\). Finally we will prove that \((\overline{f}, \overline{g})\) is an isomorphism. Since \(\mathcal{A}\) and \(\mathcal{B}\) are abelian we have \(\overline{f}\) and \(\overline{g}\) are isomorphisms. Thus, composing \(G((\overline{g})^{-1})\) on the left and \(F((\overline{f})^{-1})\) on the right in \cref{eq_mor_ind_co_comma_cand}, we obtain
		\[
		G((\overline{g})^{-1}) \circ \Gamma = \Delta \circ  F((\overline{f})^{-1}),
		\]
		thus \(((\overline{f})^{-1}, (\overline{g})^{-1})\) is a morphism in \((F \backslash G)\). Moreover,
		\[
		(\overline{f}, \overline{g}) \circ ((\overline{f})^{-1},(\overline{g})^{-1}) = ((\overline{f})^{-1} \circ \overline{f}, \overline{g} \circ (\overline{g})^{-1}) = (1_{\operatorname{Coim}f}, 1_{\operatorname{Im}g}) = 1_{\operatorname{Im}(f,g)} .
		\]
		\[
		((\overline{f})^{-1},(\overline{g})^{-1}) \circ (\overline{f}, \overline{g}) = (\overline{f} \circ (\overline{f})^{-1}, (\overline{g})^{-1} \circ \overline{g}) = (1_{\operatorname{Im}f}, 1_{\operatorname{Coim}g}) = 1_{\operatorname{Coim}(f,g)} .
		\]
		Hence, the induced morphism is an isomorphism. Therefore, \((F \backslash G)\) is an abelian category.	
	\end{proof}
	
	We can obtain a result analogous to \cref{teo_cosist_abeli} for the other variations of the definition of a comma category by making the necessary adaptations, as described below.
	
	\begin{theorem}
		Let $\mathcal{A}$, $\mathcal{B}$ and $\mathcal{C}$ be abelian categories, $F: \mathcal{A} \rightarrow \mathcal{C}$ a left exact contravariant functor and $G: \mathcal{B} \rightarrow \mathcal{C}$ a left exact functor. Then, \((F|G)\) is an abelian category.
	\end{theorem}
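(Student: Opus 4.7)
The plan is to adapt the proof of \cref{teo_cosist_abeli} by swapping the variances of $F$ and $G$ throughout. I would proceed in three phases: additivity, existence of kernels and cokernels, and verification that every induced morphism is an isomorphism.

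For additivity, componentwise addition and scalar multiplication on morphism sets make $(F|G)$ into a linear category, $(0_{\mathcal{A}}, 0_{\mathcal{B}}, 0)$ is a zero object, and the coproduct of $(A,B,\alpha)$ and $(A',B',\alpha')$ is realised as $(A \oplus A', B \times B', \beta)$; on the $F$ side, contravariance converts the coproduct $A \oplus A'$ into the product $F(A) \times F(A')$, mirroring \cref{prop_cosist_adit} with the roles of $\mathcal{A}$ and $\mathcal{B}$ interchanged. For kernels and cokernels of $(f,g): (A,B,\alpha) \to (A',B',\alpha')$ with $f: A \to A'$ and $g: B' \to B$, the universal property in $(F|G)$ forces the forms
\[
\operatorname{Ker}(f,g) = ((\operatorname{Ker} f, \operatorname{Coker} g, \delta), (\operatorname{ker} f, \operatorname{coker} g)),
\]
\[
\operatorname{Coker}(f,g) = ((\operatorname{Coker} f, \operatorname{Ker} g, \gamma), (\operatorname{coker} f, \operatorname{ker} g)).
\]
The cokernel map $\gamma$ is produced using left exactness of $G$: since $G(\operatorname{ker} g)$ is the kernel of $G(g)$, and the morphism condition gives the vanishing $G(g) \circ \alpha' \circ F(\operatorname{coker} f) = \alpha \circ F(f \circ \operatorname{coker} f) = 0$, the map $\alpha' \circ F(\operatorname{coker} f)$ factors uniquely through $G(\operatorname{ker} g)$ to yield $\gamma$. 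The kernel map $\delta$ is constructed dually using the contravariant left exactness of $F$, together with the vanishing $G(\operatorname{coker} g) \circ \alpha \circ F(f) = G(\operatorname{coker} g \circ g) \circ \alpha' = 0$ from the morphism condition.

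Finally, the canonical factorisations of $f$ and $g$ in $\mathcal{A}$ and $\mathcal{B}$ yield
\[
\operatorname{Im}(f,g) = (\operatorname{Im} f, \operatorname{Coim} g, \Delta), \qquad \operatorname{Coim}(f,g) = (\operatorname{Coim} f, \operatorname{Im} g, \Gamma),
\]
the induced morphism is $(\overline{f}, \overline{g})$, and it is an isomorphism because $\overline{f}$ and $\overline{g}$ are isomorphisms in the abelian categories $\mathcal{A}$ and $\mathcal{B}$; that the inverse pair is also a morphism in $(F|G)$ follows by composing the defining equations of $\Delta$ and $\Gamma$ with $F$ and $G$ applied to the inverses. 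The main obstacle is the construction of $\delta$ in the kernel: while $\gamma$ uses left exactness of $G$ in a manifestly dual manner to \cref{prop_cosist_kern}, the dual construction of $\delta$ requires careful bookkeeping of variances to see that the contravariant left exactness of $F$ supplies the correct universal property on the $\mathcal{C}$ side, and that the morphism condition $\alpha \circ F(f) = G(g) \circ \alpha'$ delivers exactly the factorisation needed.
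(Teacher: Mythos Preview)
Your proposal is correct and follows exactly the approach the paper intends: the paper gives no explicit proof of this theorem, stating only that it is obtained from \cref{teo_cosist_abeli} ``by making the necessary adaptations,'' and you have carried out precisely those adaptations. Your identifications of the kernel and cokernel as $(\operatorname{Ker} f,\operatorname{Coker} g,\delta)$ and $(\operatorname{Coker} f,\operatorname{Ker} g,\gamma)$, and of the image and coimage as $(\operatorname{Im} f,\operatorname{Coim} g,\Delta)$ and $(\operatorname{Coim} f,\operatorname{Im} g,\Gamma)$, are the correct duals of the co-comma constructions, and the induced morphism $(\overline{f},\overline{g})$ is then an isomorphism for the same reason as before.
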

	
	\begin{theorem}
		Let $\mathcal{A}$, $\mathcal{B}$ and $\mathcal{C}$ be abelian categories, $F: \mathcal{A} \rightarrow \mathcal{C}$ a left exact contravariant functor and $G: \mathcal{B} \rightarrow \mathcal{C}$ a right exact contravariant functor. Then, \((F-G)\) is an abelian category.
	\end{theorem}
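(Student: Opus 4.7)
The plan is to follow the proof of \cref{teo_cosist_abeli} (the co-comma case) step by step, making the modifications required by the contravariance of both $F$ and $G$. The argument decomposes into four parts: show $(F-G)$ is additive, construct kernels, construct cokernels, and verify the canonical induced morphism is always an isomorphism.

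For additivity I would mirror \cref{prop_cosist_adit}. The zero object is $(0_{\mathcal{A}}, 0_{\mathcal{B}}, 0)$. Given $(A,B,\alpha)$ and $(A',B',\alpha')$, since both $F$ and $G$ are contravariant additive they send biproducts to biproducts with inclusions and projections interchanged; in particular $(G(B\oplus B'), G(i_1^B), G(i_2^B))$ is a product of $G(B)$ and $G(B')$ in $\mathcal{C}$. I define $\beta\colon F(A\oplus A')\to G(B\oplus B')$ as the arrow given by the universal property of this product with components $\alpha\circ F(i_1^A)$ and $\alpha'\circ F(i_2^A)$, and check that $(A\oplus A', B\oplus B',\beta)$ together with the inclusions $(i_k^A, i_k^B)$ is the coproduct of $(A,B,\alpha)$ and $(A',B',\alpha')$ in $(F-G)$ by the same diagram chase as in \cref{prop_cosist_adit}.

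For kernels and cokernels, let $(f,g)\colon (A,B,\alpha)\to(A',B',\alpha')$ be any morphism. I claim the kernel is $(\operatorname{Ker} f, \operatorname{Ker} g, \beta)$, together with $(\operatorname{ker} f, \operatorname{ker} g)$, where $\beta$ is the unique morphism satisfying $\beta\circ F(\operatorname{ker} f)=G(\operatorname{ker} g)\circ\alpha$. Existence comes from the right exactness of $G$: since $G(\operatorname{ker} g)$ is then the cokernel of $G(g)$ in $\mathcal{C}$, the identity $G(\operatorname{ker} g)\circ\alpha\circ F(f)=G(g\circ\operatorname{ker} g)\circ\alpha'=0$ (obtained from the morphism relation for $(f,g)$) lets us produce $\beta$ from the relevant universal property. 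Cokernels are dual: $\operatorname{coker}(f,g)=(\operatorname{Coker} f,\operatorname{Coker} g,\gamma)$ with $\gamma$ constructed from the left exactness of $F$, which makes $F(\operatorname{coker} f)$ the kernel of $F(f)$. From these one deduces $\operatorname{Im}(f,g)=(\operatorname{Im} f,\operatorname{Im} g,\Delta)$ and $\operatorname{Coim}(f,g)=(\operatorname{Coim} f,\operatorname{Coim} g,\Gamma)$, with $\Delta$ and $\Gamma$ coming from the same kernel/cokernel universal properties in $\mathcal{C}$.

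The final task is to verify that the induced morphism $(\overline{f},\overline{g})\colon\operatorname{Coim}(f,g)\to\operatorname{Im}(f,g)$ is an isomorphism. Its underlying components $\overline{f}$ and $\overline{g}$ are already isomorphisms in the abelian categories $\mathcal{A}$ and $\mathcal{B}$, so it remains to show that $((\overline{f})^{-1},(\overline{g})^{-1})$ is also a morphism in $(F-G)$; this is done by conjugating the compatibility relation between $\Delta$ and $\Gamma$ by $F((\overline{f})^{-1})$ and $G((\overline{g})^{-1})$, exactly as in \cref{teo_cosist_abeli}. I expect the main obstacle throughout to be bookkeeping of arrow directions: because both $F$ and $G$ are contravariant, every cancellation step (that $\theta=\operatorname{coker}(\operatorname{ker} f)$ is an epimorphism, making $F(\theta)$ a monomorphism, or that $\widetilde{\beta}=\operatorname{ker}(\operatorname{coker} g)$ is a monomorphism) must be paired with the correct exactness hypothesis applied on the correct side of each equation, and the construction and compatibility of $\Delta$ and $\Gamma$ is the step that most requires care.
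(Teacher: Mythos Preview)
Your outline matches the paper's (sketched) approach, and the identifications $\operatorname{Ker}(f,g)=(\operatorname{Ker} f,\operatorname{Ker} g,\beta)$, $\operatorname{Coker}(f,g)=(\operatorname{Coker} f,\operatorname{Coker} g,\gamma)$, $\operatorname{Im}(f,g)$, $\operatorname{Coim}(f,g)$ and the handling of the induced morphism are all correct. There is, however, a genuine error in your construction of $\beta$ and $\gamma$: you have swapped which exactness hypothesis is responsible for each.

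In the paper's convention (read off from the construction of $\gamma$ in \cref{prop_cosist_coker}), ``$G$ right exact contravariant'' means $G$ carries right exact sequences to left exact ones, so $G(\operatorname{coker} g)=\operatorname{ker} G(g)$; dually, ``$F$ left exact contravariant'' means $F(\operatorname{ker} f)=\operatorname{coker} F(f)$. Your assertions that ``$G(\operatorname{ker} g)$ is the cokernel of $G(g)$'' and ``$F(\operatorname{coker} f)$ is the kernel of $F(f)$'' are therefore not consequences of the stated hypotheses. More importantly, even if they held, they would not produce the factorizations you need: to obtain $\beta$ with $\beta\circ F(\operatorname{ker} f)=G(\operatorname{ker} g)\circ\alpha$ from the vanishing $\bigl(G(\operatorname{ker} g)\circ\alpha\bigr)\circ F(f)=0$ that you correctly computed, the relevant universal property is that of $F(\operatorname{ker} f)$ as the cokernel of $F(f)$ --- this is exactly the left exactness of $F$ --- not any property of $G(\operatorname{ker} g)$, whose cokernel universal property governs maps \emph{out of} $G(\operatorname{Ker} g)$, not into it. Symmetrically, constructing $\gamma$ with $G(\operatorname{coker} g)\circ\gamma=\alpha'\circ F(\operatorname{coker} f)$ requires $G(\operatorname{coker} g)=\operatorname{ker} G(g)$ (right exactness of $G$), not a property of $F(\operatorname{coker} f)$. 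Once this swap is corrected, the rest of your argument goes through unchanged.
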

	
	\sloppy
	\printbibliography
\end{document}